\newcommand{\tens}{\otimes}
\newcommand{\bC}{{\mathbb{C}}}
\newcommand{\bF}{{\mathbb{F}}}
\newcommand{\bM}{{\mathbb{M}}}
\newcommand{\bN}{{\mathbb{N}}}
\newcommand{\bE}{{\mathbb{E}}}
  \newcommand{\A}{{\mathcal{A}}}
  \newcommand{\E}{{\mathcal{E}}}
  \newcommand{\F}{{\mathcal{F}}}
  \newcommand{\M}{{\mathcal{M}}}
  \newcommand{\N}{{\mathcal{N}}}
\renewcommand{\S}{{\mathcal{S}}}
\renewcommand{\phi}{\varphi}
\newcommand{\upchi}{{\raise.35ex\hbox{\ensuremath{\chi}}}}
\newcommand{\eps}{\varepsilon}
\newcommand{\dd}{{\mathrm{d}}}
\renewcommand{\leq}{\leqslant}
\renewcommand{\geq}{\geqslant}
\newcommand{\AND}{\text{ and }}
\newcommand{\norm}[1]{\left\| #1 \right\|} \newcommand{\md}[1]{\left|
  #1 \right|} \newcommand{\p}[1]{\left( #1 \right)}
\newcommand{\obox}[1]{\left[ #1 \right]}
\newcommand{\set}[1]{\left\lbrace #1 \right\rbrace}
\newcommand{\dbox}[1]{\llbracket #1 \rrbracket}
\newtheorem{thm}{Theorem}[section] \newtheorem{defi}[thm]{Definition}
\newtheorem{prop}[thm]{Proposition} \newtheorem{cor}[thm]{Corollary}
\newtheorem{lemma}[thm]{Lemma} \newtheorem{rk}[thm]{Remark}
\begin{document}
\title{Sums of free variables in fully symmetric spaces} \date{}
\author{L\'eonard Cadilhac}\address{Institute of Mathematics, Polish Academy of Sciences, ul. \'Sniadeckich 8, 00-656 Warsaw, Poland}\email{leonard.cadilhac@u-psud.fr} \author{\'Eric
  Ricard} \address{ Normandie Univ, UNICAEN, CNRS, LMNO, 14000 Caen,
  France} \email{eric.ricard@unicaen.fr}

\maketitle

\begin{abstract}
We give a method to obtain, from Voiculescu's inequality, 
norm estimates for sums of free variables 
with amalgamation in general fully symmetric spaces. We use these 
estimates to interpolate the Burkholder inequalities for non commutative
martingales. The method is also applicable to other similar
settings. In that spirit, we improve known results on the
non commutative Johnson-Schechtman inequalities and recover Khinchin
inequalities associated to free groups.
\end{abstract}
\section{Introduction}

 Versions of Khinchin inequalities for Schatten classes of index
 $1<p<\infty$ were established by Lust-Piquard in the mid 80's. They
 are one of the first evidence of a new non commutative phenomenon; one
 has to deal with different notions of square functions in quantum
 analysis.  Since then, they were omnipresent in all the developments
 of non commutative analysis. They become part of the theory and are
 used to define the right function spaces. For instance the
 formulation of the Burkholder-Gundy inequalities for martingales led
 to the definition of the column Hardy space $\mathcal H_p^c(\M)$ and
 its row version $\mathcal H_p^r(\M)$ associated to a semi-finite von
 Neumann algebra. The martingale Hardy spaces are then defined as
 $\mathcal H_p(\M)=\mathcal H_p^c(\M)\cap \mathcal H_p^r(\M)$ for
 $2<p<\infty$ and $\mathcal H_p(\M)=\mathcal H_p^c(\M)+ \mathcal
 H_p^r(\M)$ for $1\leq p<2$.

 One of the main drawback is the difficulty to understand the
 behavior of those square functions with respect to interpolation
 theory. Indeed, it not clear how to deal with intersections or sums
 of two spaces in full generality. Since there is only one square
 function when the underlying algebra is commutative, those problems
 do not occur at all.  Much efforts have been made to study the
 interpolation of non  
 commutative $L_p$-inequalities in various
 contexts, for instance \cite{C, D, DPP, JRW, JS, JSZ, RW, SZ}. Given
 a function space $E$, say on $(0,\infty)$, one can associate a
 non 
 commutative space $E(\M)$ to any semi-finite von Neumann algebra
 \cite{KS}. The general question is the following: given a function
 space $E$ which is an interpolation space for $(L_p,L_q)$ and knowing an
 inequality that is true for $L_p(\M)$ and $L_q(\M)$, can we get a new
 one for $E(\M)$? Most of the papers quoted above relied on very
 elaborated machineries on function spaces and quite satisfactory
 results are available but under technical conditions (such has on Boyd
 indices, ...).

On the opposite side, freeness in quantum probabilities behaves in a
nicer way than independence in classical probabilities. The central
limit object, a semi-circular variable is bounded in $L_\infty$. Many
inequalities still hold true when $p=\infty$, the most famous example
is the Haagerup inequality for generators of free groups (which is
seen as a Khinchin type inequality). It has been a key tool to
deduce interpolation results for intersections or sums of spaces
coming from square functions \cite{P}. Very recently in \cite{C}, the
first author discovered a very efficient way to deal with
interpolation of the non commutative Khinchin inequalities. The main
novelty is that one can use certain algebraic decompositions to play
the role of square functions. With the help of freeness, it was used
to interpolate the Burkholder-Gundy inequalities. This paper is an
attempt to show that this technique is fairly general and can be used
to overcome quite easily the problems of interpolation of
non commutative function spaces, basically without any assumption.

 After the Khinchin inequality, the next interesting mixed-norms are
 given by the so called Voiculescu inequality which is a combination
 of three different norms in the spirit of the  Rosenthal inequality for
 sums of independent variables. They appeared in \cite{J1, J2} and are
 hidden in the conditioned version of the Burkholder-Gundy
 inequality. To describe them, let $\N\subset \M$ be finite von
 Neumann algebras with a normal faithful trace $\tau$ and a trace
 preserving conditional expectation $\E :\M\to \N$. For a family
 $(x_i)_{i\geq 1}$ and $1\leq p\leq \infty$, the three norms involved
 are
$$\|(x_i)\|_{p,c}= \| \big(\E \sum_{i=1}^\infty
 x_i^*x_i\big)^{1/2}\|_{L_p(\M)}, \quad \|(x_i)\|_{p,r}= \|
 (x_i^*)\|_{p,c}, \quad \|(x_i)\|_{p,d}= \Big(\sum_{i=1}^\infty \|
 x_i\|_{L_p(\M)}^p\Big)^{1/p}.$$ It was established in \cite{J1} for
 $p=\infty$ and in \cite{JPX} for $2<p<\infty$, that if the variables
 $(x_i)_{i\geq 1}$ are free and centered in $\M$ over $\N$ then for some constants
 independent of $p$
$$\| \sum_{i=1}^\infty x_i\|_{L_p(\M)}\approx
 \|(x_i)\|_{p,c}+\|(x_i)\|_{p,r}+\|(x_i)\|_{p,d}.$$ One can deduce a
 statement for $1\leq p<2$ by duality using an infimum as usual.  One
 of our objective is to interpolate those inequalities.  If $E$ is a
 symmetric space, we set
$$\|(x_i)\|_{E,c}= \| \big(\E \sum_{i=1}^\infty
 x_i^*x_i\big)^{1/2}\|_{E(\M)}, \quad \|(x_i)\|_{E,r}= \|
 (x_i^*)\|_{E,c}, \quad \|(x_i)\|_{E,d}= \big\|\sum_{i=1}^\infty
 x_i\otimes e_i\big\|_{E(\M\overline{\otimes} \ell_\infty)},$$ where $(e_i)$ is the
 canonical basis of $\ell_\infty$ equipped with its standard trace.
 We obtain that if $E$ is an interpolation space for $(L_2,L_\infty)$
 and $\sum_{i=1}^\infty x_i \in \M$,
$$\| \sum_{i=1}^\infty x_i\|_{E(\M)}\approx
 \|(x_i)\|_{E,c}+\|(x_i)\|_{E,r}+\|(x_i)\|_{E,d}.$$ Similarly when $E$
 is an interpolation space for $(L_1,L_2)$
$$\| \sum_{i=1}^\infty x_i\|_{E(\M)}\approx \inf_{x_i=a_i+b_i+d_i}
 \|(a_i)\|_{E,c}+\|(b_i)\|_{E,r}+\|(c_i)\|_{E,d},$$ one can even
 choose $a_i,\,b_i, c_i$ independently of $E$. The proof of the main
 inequalities follows three steps. First, we show that the above
 infimum is achieved when $E=L_1$ using a compactness argument. A
 sequence attaining the minimum is called an optimal decomposition. We
 then show that an optimal decomposition has a certain algebraic form
 which is used in the last step to deduce the inequalities from that
 for $E=L_\infty$. This is a fairly general principle.  In the last
 section, we sketch a proof of a similar interpolation result around
 the Haagerup-Buchholz inequality \cite{Pos, B} for words of length
 $d$ using a generating set in the free group algebra. We believe that
 the techniques could be pushed in many other directions like the
 general Rosenthal type inequality for free chaos of \cite{JPX} which
 is technically more involved; we leave it as a problem for the
 interested reader.

 We also relate our results with the Johnson-Schechtman inequalities
 for free variables obtained in \cite{SZ}.  They deal with the
 simplest case $\N=\bC$. As their commutative counterpart, they extend
 Rosenthal type inequalities to some symmetric spaces. They give a
 fully computable expression for $\| \sum_{i=1}^\infty
 x_i\|_{E(\M)}$. In this case, an algebraic decomposition can be given
 explicitly, this leads to an improvement of the constants in
 \cite{SZ}. We also explain how to use our main result to interpolate
 the conditioned Burkholder inequality in the spirit of \cite{D,
   RW}. After we submitted this article, N. Randrianantoanina informed
 us that he also obtained with Q. Xu these martingale inequalities in
 \cite{RaXu} but with different techniques.

\section{Preliminaries}

\subsection{Non commutative integration}
 We use
\cite{PX, P, terp} and \cite{FK} as general references for non
commutative integration in the semi-finite setting. We use the
classical definition of the non commutative $L_p$, $1\leq p<\infty$
associated to a semi-finite von Neumann algebra $(\M,\tau)$
$$L_p(\M,\tau)=\{ x \in L_0(\M,\tau) \;|\;
\|x\|_p^p=\tau(|x|^p)<\infty\},$$ where $L_0(\M,\tau)$ is the space of
$\tau$-measurable operators affiliated with $\M$ (see \cite{terp}). We
also set as usual $L_\infty(\M)=\M$ with its standard norm. Of course
we always have that $L_p(\M,\tau)\subset L_q(\M,\tau)$ whenever $q\leq
p$ when $\M$ is finite ($\tau(1)<\infty$).

Given $x\in L_0(\M,\tau)$, its generalized singular values \cite{FK}
is a function $\mu(x,\tau):(0,\infty)\to(0,\infty)$ which is non
increasing and has the same distribution as $x$ when $(0,\infty)$ is
equipped with the standard Lebesgue measure. We may drop the reference
to $\tau$ when it is not necessary.

 A Banach function space $(E,\|.\|_E)$ on $(0,\infty)$ is said to be
 symmetric if it is included in
 $L_0(L_\infty(0,\infty))=L_0(0,\infty)$ and if whenever $f\in E$ and
 $g\in L_0(0,\infty)$ satisfy $\mu(g)=\mu(f)$ then $g\in E$ and
 $\|g\|_E=\|f\|_E$. The non commutative version of $E$ associated to
 $(\M,\tau)$ is the space $E(\M,\tau)=\{ x\in L_0(\M,\tau) \; |\; \mu(x)\in
 E\}$ with the norm $\|x\|_{E(\M)}=\|\mu(x)\|_E$, see \cite{KS}. Note that if
 $(\N,\tau)\subset(\M,\tau)$ is a von Neumann subalgebra, then
 for $y\in L_0(\N,\tau)$, $\|y\|_{E(\N)}=\|y\|_{E(\M)}$.

 We will focus on fully symmetric function spaces. These are symmetric
 spaces $E$ such that if $f\in E$ and $g\in L_0(0,\infty)$ satisfy
 for all $t>0$, $\int_0^t \mu(g)\leq \int_0^t \mu(f)$ then $g\in
 E$ and $\|g\|_E\leq \|f\|_E$. They admit several other
 characterizations especially using general interpolation, we refer to
 \cite{BL, Kinter}. This is related to the fact that $\int_0^t
 \mu(f)=\|f\|_{L_1+tL_\infty}$ for $t>0$ and $f\in L_0(0,\infty)$.

An interpolation space for the couple $(L_p(0,\infty),L_q(0,\infty))$,
$1\leq p,q\leq \infty$ is a Banach function space $E\subset
L_p(0,\infty)+L_q(0,\infty)$ such that if
$T:L_p(0,\infty)+L_q(0,\infty)\to L_p(0,\infty)+L_q(0,\infty)$ is a
linear map such that $\|T\|_{ L_p(0,\infty)\to L_p(0,\infty)}, \|T\|_{
  L_q(0,\infty)\to L_q(0,\infty)}\leq 1$ then $T(E)\subset E$ and
$\|T\|_{E\to E}\leq 1$. Fully symmetric spaces are exactly
interpolation spaces for the couple
$(L_1(0,\infty),L_\infty(0,\infty))$.

 When $E$ is a fully symmetric function space on $(0,\infty)$ or more
 specifically an interpolation space for $(L_p,L_q)$, the non
 commutative function spaces associated to it also enjoy the same
 properties see Corollary 2.2 in \cite{PX}. We will mainly use that when
 $E$ is an interpolation for $(L_p,L_q)$ with $p\leq q$ if
 $(\M_1,\tau_{\M_1})$ and $(\M_2,\tau_{\M_2})$ are semi-finite von Neumann algebras and
 if $T: L_p(\M_1,\tau_{\M_1})\to L_p(\M_2,\tau_{\M_2})$ is a map that is
 contractive on $L_p$ and $L_q$ i.e. $\|T\|_{L_r(\M_1,\tau_{\M_1})\to
   L_r(\M_2,\tau_{\M_2})}\leq 1$ for $r=p,\,q$ then $T(E(\M_1))\subset E(\M_2)$
 and $T$ is a contraction from $E(\M_1)$ to $E(\M_2)$.
 We also refer to
 \cite{KS} for more on this topic.

 In the whole paper, we always consider fully symmetric spaces over
 $(0,\infty)$, this is not a restriction (see Remark \ref{jsmart}).
 The von Neumann algebras $(\M,\tau)$ will always be non commutative
 probability spaces ($\tau(1)=1$) except $\M\overline\otimes
 \ell_\infty$ with its natural trace and $L_\infty(0,\infty)$.

\subsection{Free products}
We fix $N\in \bN^*\cup\{\infty\}$. If $(\M_i,\tau_i)$, $1\leq i\leq N$
are finite von Neumann algebras with a common sub-von Neumann algebra
$(\N,\tau)$ and conditional expectations $\E_i:\M_i\to \N$ such that
$\tau\circ \E_i=\tau_i$, we denote by
$(\M,\tau)=\ast_{i=1...N}(\M_i,\tau_i)$ the amalgamated free product
of $(\M_i,\tau_i)$'s over $\N$.  We refer to \cite{VDN} for precise
definitions. We simply recall basic facts. If $x\in \M_i$, we denote
by $\mathring{x}=x-\E_i x$ and $\mathring \M_i=\{\mathring{x}; x\in
\M_i\}$; there is a natural decomposition $\M_i=\N\oplus \mathring
\M_i$.  The space $W=\N \oplus_{n\geq 1} \bigoplus_{1\leq i_1\neq
  i_2\neq ...\neq i_n\leq N} \mathring
  {\M_{i_1}}\otimes_{\N}...\otimes_{\N} {\mathring {\M_{i_n}}}$ is a
  $*$-algebra where the product is given by concatenation and
  centering with respect to $\N$. It has a trace given by $\tau_{\N}
  \circ \E$ where $\E$ is the natural projection onto $\N$. Then
  $(\M,\tau)$ is the finite von Neumann algebra obtained by the GNS
  construction from $(W,\tau)$. Elements in $\bigoplus_{1\leq i_1\neq
    i_2\neq ...\neq i_n\leq N}\mathring
  {\M_{i_1}}\otimes_{\N}...\otimes_{\N} {\mathring {\M_{i_n}}}$ are
  said to be of length $n$.

  To lighten notations, seeing $(\M_i,\tau_i)$ as a sub-von Neumann
  algebra of $(\M,\tau)$, we have that $\tau_{|\M_i}=\tau_i$ and
  $\E_{|\M_i}=\E_i$ and we will simply write $\tau$ and $\E$ instead
  of $\tau_i$ and $\E_i$.

 We use the notation $\mathring y$ for $y-\E y$ for $y\in L_1(\M)$.

\subsection{Column conditioned norms}

\begin{sloppypar}
In this section, we assume that $(\M,\tau)$ is a finite von Neumann
algebra with a subalgebra $\N$. The trace $\tau$ is well defined on
$L_0(\M)^+$ by \mbox{$\tau(x)=\sup_{n} \tau(x1_{[0,n]}(x))$} (see
\cite{terp}).  In particular for any $x\in L_0(\M)$,
$\tau(x^*x)=\tau(xx^*)$ and if $q_n$ is a non decreasing sequence of
projections going to 1 strongly and $x\in L_0(\M)$, then
$\tau(x^*x)=\sup_n \tau (x^*q_nx)$ (this is obvious when $x\in L_2$).
\end{sloppypar}

For $x\in L_2(\M)$, $\E x^*x$ is well defined in $L_1(\N)\subset
L_1(\M)$.

\begin{defi}
 For $x\in L_2(\M)$ and $1\leq p\leq \infty$, we set $ \| x\|_{p,c}=
 \| (\E x^*x)^{1/2}\|_p$.
\end{defi}
Note that $\|x\|_{p,c}<\infty$ when $1\leq p\leq 2$ and
$\|x\|_{2,c}=\|x\|_2$.

 The completion of the set of elements $x$ satisfying
 $\|x\|_{p,c}<\infty$ is denoted by $L_p(\M,\E)$ in \cite{J2}. Most of
 the results in this section can be collected from that paper or
 \cite{J1}. But for completeness of the next section, we will give a
 different proof of the basic facts we need; \cite{J1,J2} also deal
 with type III von Neumann algebras that we do not consider.

\begin{lemma}\label{polar}($\E$-polar decomposition) Let $E$ be right-$\N$-submodule of $L_2(\M)$.  Let $x\in E$
then there exists $u\in L_2(\M)$ such that $\|u\|_{\infty,c}\leq 1$
and $x=u(\E x^*x)^{1/2}$.

Moreover $s(\E x^*x)\leq \E u^*u\leq 1$ and $u 1_{[\eps,\infty)}(\E
  x^*x)\in E$ for all $\eps>0$.
\end{lemma}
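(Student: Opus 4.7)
Let $a=(\E x^*x)^{1/2}\in L_2(\N)_+$. The idea is to define $u=xa^{-1}$, but $a$ need not be boundedly invertible on its support, so I would approximate. For each $\eps>0$, set $p_\eps=1_{[\eps,\infty)}(a)\in \N$ and let $b_\eps=g_\eps(a)\in\N$, where $g_\eps(t)=t^{-1}\one_{[\eps,\infty)}(t)$; this is bounded by $\eps^{-1}$. Define
$$u_\eps := x\,b_\eps.$$
Since $E$ is a right $\N$-module and $b_\eps\in\N$, we have $u_\eps\in E\subset L_2(\M)$. The key computation is
$$\E u_\eps^* u_\eps = b_\eps (\E x^*x) b_\eps = g_\eps(a)\, a^2\, g_\eps(a)=p_\eps,$$
so $\|u_\eps\|_{\infty,c}\le 1$, and similarly $\E u_{\eps_1}^* u_{\eps_2}=p_{\max(\eps_1,\eps_2)}$.

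Next I would show that $(u_\eps)_{\eps>0}$ is Cauchy in $L_2(\M)$ as $\eps\to 0$. The identity above gives
$$\|u_{\eps_1}-u_{\eps_2}\|_2^2=\tau\bigl(\E(u_{\eps_1}-u_{\eps_2})^*(u_{\eps_1}-u_{\eps_2})\bigr)=\tau(p_{\eps_1}-p_{\eps_2}),\qquad \eps_1<\eps_2,$$
and this tends to $0$ since $p_\eps \uparrow s(a)$ strongly with $\tau(s(a))\le \tau(1)<\infty$. Let $u\in L_2(\M)$ be the limit. Passing to the limit in $L_1$ via $\|u^*u-u_\eps^*u_\eps\|_1\le (\|u\|_2+\|u_\eps\|_2)\|u-u_\eps\|_2$ and using contractivity of $\E$ on $L_1$, I get $\E u^*u=\lim p_\eps = s(\E x^*x)$, which immediately yields the sandwich $s(\E x^*x)\le \E u^*u\le 1$.

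To obtain $x=ua$, I would observe that $b_\eps a=p_\eps$ in $\N$, so $u_\eps a=x p_\eps$; by Hölder $u_\eps a\to ua$ in $L_1$, while $xp_\eps\to x$ in $L_2$ (indeed $x(1-s(a))=0$ because $\tau(x(1-s(a))x^*)=\tau((1-s(a))a^2)=0$, and then $\|x-xp_\eps\|_2^2=\tau((s(a)-p_\eps)a^2)\to 0$). Hence $ua=x$. Finally, for any $\eps>0$ and any $\eps'<\eps$, $u_{\eps'} p_\eps = x b_{\eps'} p_\eps = x b_\eps = u_\eps$, so letting $\eps'\to 0$ gives $u p_\eps = u_\eps\in E$, which is the last assertion.

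The main obstacle I anticipate is handling the lack of a bounded inverse for $a$ and making sure the limiting object $u$ stays in $L_2(\M)$ with a controlled conditional square; the Cauchy estimate $\|u_{\eps_1}-u_{\eps_2}\|_2^2=\tau(p_{\eps_1}-p_{\eps_2})$ is exactly the trick that tames this, exploiting that $a\in L_2(\N)$ has a finite-trace support projection.
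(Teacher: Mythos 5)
Your proof is correct and follows essentially the same route as the paper's: both truncate the spectrum of $a=(\E x^*x)^{1/2}$ and use the $\N$-bimodularity of $\E$ to control the resulting partial-isometry part. The only difference is packaging — the paper defines $u=x\,s(a)a^{-1}$ directly in $L_0(\M)$ and checks $u\in L_2(\M)$ via normality and truncations, whereas you build $u$ as the $L_2$-limit of $u_\eps=x\,g_\eps(a)$ through the Cauchy estimate $\|u_{\eps_1}-u_{\eps_2}\|_2^2=\tau(p_{\eps_1}-p_{\eps_2})$, which also yields the slightly sharper equality $\E u^*u=s(\E x^*x)$.
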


\begin{proof}
Let $p$ be the support of $\alpha=(\E x^*x)^{1/2}$ in $\N$.
 
First note that $x=xp$ as $\tau
\big((1-p)x^*x(1-p)\big)=\tau((1-p)\alpha^2)=0$.  Then the element
$y=p\alpha^{-1}\in L_0(\N)\subset L_0(\M)$ and $u=xy$ is well defined
in $L_0(\M)$.  Clearly $u\alpha=xp=x$ in $L_0(\M)$.

Let us check that $u\in L_2(\M)$. Set $p_n=1_{\{0\}\cup [\frac 1
    n,n]}(\alpha)$, then $p_n\nearrow 1$ and by normality of $\tau$,
$$\tau(u^*u)=\tau(uu^*)=\lim_n\tau(xyp_nyx^*)=\lim_n\tau
(p_nyx^*xyp_n).$$ Since $p_ny\in \N$, by the modular property of
conditional expectations
$$\tau (p_nyx^*xyp_n)= \tau (p_ny\alpha^2 y p_n)\leq 1.$$ We also have
that
$u1_{[\eps,\infty)}(\alpha)=x1_{[\eps,\infty)}(\alpha)\alpha^{-1}\in
    E$ for all $\eps>0$ as $1_{[\eps,\infty)}(\alpha) \alpha^{-1}\in
      \N$.
  
 Next,
$$\| u\|_{\infty,c}^2= \| \E u^*u\|_\infty= \sup_{z} \tau\big(z^*(\E
 u^*u)z\big)=\sup_{z} \|uz||_2^2,$$ where $z$ runs over all elements
 in $\N$ with $\|z\|_2\leq 1$. As $u\in L_2(\M)$ and $z\in \N$,
 $\|uz\|_2=\lim_n\| u p_n z\|_2$ so that we can conclude since $\| u
 p_n z\|_2^2= \tau \big( z^*(p_nyx^*xy^*p_n)z\big)\leq \tau( z^*z)\leq
 1$.

 To get the last statement, consider similarly:
 $$\|\alpha\|_2^2=\|x\|_2^2=\tau\big(\alpha u^*u \alpha)=\lim_n
 \tau\big(u\alpha p_n \alpha u^*)= \lim_n \tau\big(p_n\alpha \E(u^* u)
 \alpha p_n)=\tau\big(\alpha \E(u^*u)\alpha \big).
 $$ The faithfulness of $\tau$ gives that $p(\E u^*u)p=p$. Since $\E
 u^*u\leq 1$, we must also have $\E(u^*u)p=p\E(u^*u)$ which is enough
 to conclude.
\end{proof}

\begin{lemma}\label{kad}
Let $u,v\in L_2(\M)$ with $\|u\|_{\infty, c},\|v\|_{\infty, c}\leq 1$,
then $\| \E(u^* v)\|_\infty\leq 1$.
\end{lemma}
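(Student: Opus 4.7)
The plan is to reduce to the bilinear inequality
$$|\tau(z_1^* \E(u^*v) z_2)| \le \|z_1\|_2 \|z_2\|_2, \qquad z_1, z_2 \in \N \cap L_\infty(\N),$$
which, once established, forces $\E(u^*v)$ to lie in $\N$ with $\|\E(u^*v)\|_\infty \le 1$.

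To prove the bilinear inequality, I would use the module property of $\E$ twice. For $z_1, z_2 \in \N$ and $x \in L_1(\M)$, the identity $z_1^* \E(x) z_2 = \E(z_1^* x z_2)$ combined with $\tau \circ \E = \tau$ gives $\tau(z_1^* \E(x) z_2) = \tau(z_1^* x z_2)$. Applied to $x = u^*v$ this yields
$$\tau(z_1^* \E(u^*v) z_2) = \tau((uz_1)^*(vz_2)),$$
which the Cauchy--Schwarz inequality in $L_2(\M)$ bounds by $\|uz_1\|_2 \|vz_2\|_2$. Applied instead to $x = u^*u$, the same identity gives
$$\|uz_1\|_2^2 = \tau(z_1^* \E(u^*u) z_1) \le \|\E(u^*u)\|_\infty \|z_1\|_2^2 \le \|z_1\|_2^2,$$
and likewise $\|vz_2\|_2 \le \|z_2\|_2$; combining yields the bilinear bound.

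To deduce that $\E(u^*v) \in \N$ with $\|\E(u^*v)\|_\infty \le 1$, the bilinear bound says that for each fixed bounded $z_2$ the functional $z_1 \mapsto \tau(z_1^* \E(u^*v) z_2)$ is $\|\cdot\|_2$-continuous with norm at most $\|z_2\|_2$, so by Riesz representation $\E(u^*v) z_2 \in L_2(\N)$ with $\|\E(u^*v) z_2\|_2 \le \|z_2\|_2$. Thus left multiplication by $\E(u^*v)$ extends to a contraction on $L_2(\N)$ that commutes with the right action of $\N$; it is therefore implemented by a unique $a \in \N$ with $\|a\|_\infty \le 1$, and comparing at $z_2 = 1$ gives $a = \E(u^*v)$.

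The only mildly subtle step is this last identification (promoting $\E(u^*v) \in L_1(\N)$ to an element of $\N$ via its action on $L_2$); everything else is a direct Cauchy--Schwarz calculation. An alternative route avoiding it is to apply $\id \otimes \E$ to the positive $2\times 2$ matrix $\begin{pmatrix} u^* \\ v^* \end{pmatrix} \begin{pmatrix} u & v \end{pmatrix}$, obtaining $\begin{pmatrix} \E u^*u & \E u^*v \\ \E v^*u & \E v^*v \end{pmatrix} \ge 0$, and then to invoke a Douglas-type factorization $\E(u^*v) = (\E u^*u)^{1/2} T (\E v^*v)^{1/2}$ with $T \in \N$ and $\|T\|_\infty \le 1$, which also gives $\|\E(u^*v)\|_\infty \le 1$.
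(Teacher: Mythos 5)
Your main argument is correct but takes a different route than the paper. The paper's proof is a two-liner: complete positivity of $\E$ gives the positivity of the $2\times 2$ matrix $\bigl[\begin{smallmatrix}\E(u^*u) & \E(u^*v)\\ \E(v^*u) & \E(v^*v)\end{smallmatrix}\bigr]$, and then the Douglas factorization $\E(u^*v)=\E(u^*u)^{1/2}C\,\E(v^*v)^{1/2}$ with a contraction $C\in\N$ gives the bound immediately. You instead prove the bilinear estimate $|\tau(z_1^*\E(u^*v)z_2)|\leq\|z_1\|_2\|z_2\|_2$ by hand using the $\N$-bimodularity of $\E$ and Cauchy--Schwarz in $L_2(\M)$, and then promote the $L_1(\N)$-element $\E(u^*v)$ to an element of $\N$ via Riesz representation plus the commutation theorem (left multiplications are exactly the operators on $L_2(\N)$ commuting with the right regular representation, and $1\in L_2(\N)$ since $\tau$ is a state here). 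Both are sound; the paper's version is shorter and packages the positivity argument once and for all via complete positivity, while yours is more elementary in that it avoids the matrix/Douglas machinery and exposes the Cauchy--Schwarz mechanism explicitly, at the cost of having to justify the ``$L_1$ element with bounded multiplication action lies in $\N$'' step. You also correctly identify the paper's approach in your final paragraph, so in effect you have given both proofs.
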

\begin{proof}
As $\E$ is completely positive, we must have
$\left[ \begin{array}{cc}\E(u^*u) & \E(u^* v) \\ \E(v^*u) &
    \E(v^*v) \end{array}\right]\geq 0$. Hence one can find a
contraction $C\in \N$ so that $\E(u^* v)=\E(u^* u)^{1/2}C\E(v^*
v)^{1/2}$ from which the result follows.
\end{proof}
\begin{lemma}\label{duality} Let $E$ be a right $\N$-submodule of $L_2(\M)$ and $1\leq p\leq \infty$. For
 any $x\in E$
$$\|x\|_{p,c} =\sup_{z\in E, \|z\|_{p',c}\leq 1} |\tau (z^*x)|.$$
\end{lemma}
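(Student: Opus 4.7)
The plan is to establish the two inequalities separately, both relying on the $\E$-polar decomposition of Lemma \ref{polar}.

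The easy direction $\|x\|_{p,c}\ge\sup_{z\in E,\,\|z\|_{p',c}\le 1}|\tau(z^*x)|$ follows from decomposing $x=u\alpha$ and $z=v\beta$ with $\alpha=(\E x^*x)^{1/2}$, $\beta=(\E z^*z)^{1/2}$, and $\|u\|_{\infty,c},\|v\|_{\infty,c}\le 1$. Since $z^*x\in L_1(\M)$ and $\E$ is trace-preserving, the $\N$-bimodularity of $\E$ (applied to the $L_0(\N)$-elements $\alpha,\beta$) gives
$$\tau(z^*x)=\tau(\beta v^*u\alpha)=\tau(\alpha\beta\,\E(v^*u)).$$
Lemma \ref{kad} yields $\|\E(v^*u)\|_\infty\le 1$, so H\"older's inequality in $\N$ bounds the right-hand side by $\|\alpha\|_p\|\beta\|_{p'}=\|x\|_{p,c}\|z\|_{p',c}$.

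For the converse, I would construct near-optimal test elements from the polar decomposition of $x$. Writing $x=u\alpha$, truncate $\alpha$ by $q_{\eps,M}=1_{[\eps,M]}(\alpha)\in\N$, a spectral projection majorized by the support $p$ of $\alpha$. The structural output of Lemma \ref{polar}, namely that $\E(u^*u)$ commutes with $p$ and restricts to the identity on its range, implies $q_{\eps,M}\E(u^*u)q_{\eps,M}=q_{\eps,M}$ and $\E(u^*u)\alpha=\alpha$. For $1\le p<\infty$, take $\gamma=\alpha^{p-1}q_{\eps,M}/\|\alpha q_{\eps,M}\|_p^{p-1}\in\N$ (which is bounded because $\alpha$ is bounded on the range of $q_{\eps,M}$) and set $z=uq_{\eps,M}\gamma$; Lemma \ref{polar} shows $uq_{\eps,M}\in E$, and right multiplication by $\gamma\in\N$ keeps $z$ in $E$. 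A direct computation then gives $\E(z^*z)=|\gamma|^2$, hence $\|z\|_{p',c}=1$, and $\tau(z^*x)=\|\alpha q_{\eps,M}\|_p$. Letting $\eps\to 0$ and $M\to\infty$ recovers $\|\alpha\|_p=\|x\|_{p,c}$ by monotone convergence on $\alpha\in L_p(\N)$. For the endpoint $p=\infty$ one replaces $\gamma$ by $q/\tau(q)$, where $q\le p$ is a small non-zero spectral projection of $\alpha$ on which $\alpha q\ge(\|\alpha\|_\infty-\delta)q$; an analogous calculation produces $\|z\|_{1,c}=1$ and $\tau(z^*x)\ge\|\alpha\|_\infty-\delta$.

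The only technical point requiring care is tracking which elements actually belong to $E$ rather than merely to $L_2(\M)$: Lemma \ref{polar} only provides $u\,1_{[\eps,\infty)}(\alpha)\in E$, so the truncation parameter $\eps$ must remain positive before passing to the limit, and the norming functional must stay in $\N$ (not $L_0(\N)$) for the right $\N$-module property of $E$ to apply. The double cut-off $[\eps,M]$ handles both constraints simultaneously, and after the sandwich estimate is in place the limit $\eps\to 0$, $M\to\infty$ is routine.
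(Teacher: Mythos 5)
Your proof is correct and follows essentially the same route as the paper's: both directions rest on the $\E$-polar decomposition from Lemma~\ref{polar}, the easy direction uses Lemma~\ref{kad} plus H\"older in $\N$, and the norming direction uses the test element $u\,1_{[\eps,M]}(\alpha)\,\alpha^{p-1}/\|\alpha\,1_{[\eps,M]}(\alpha)\|_p^{p-1}$ (the paper's $z_n=up_n\alpha^{p-1}/\|p_n\alpha\|_p^{p-1}$ is the same object), with the $p=\infty$ endpoint treated via a small spectral projection of $\alpha$ exactly as in the paper. The only cosmetic difference is that in the easy direction you invoke bimodularity of $\E$ directly over $L_0(\N)$ elements $\alpha,\beta$, whereas the paper first inserts the bounded cutoffs $p_n=1_{\{0\}\cup[\frac1n,n]}(\alpha)$, $q_n=1_{\{0\}\cup[\frac1n,n]}(\beta)$ and passes to the limit — a more pedantic but safer justification of $\tau(z^*x)=\tau(\beta\,\E(v^*u)\,\alpha)$; you handle the analogous issue carefully where it matters most, in the norming direction.
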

\begin{proof}
  Let $x=u\alpha$ be the polar decomposition given by Lemma \ref{polar}.

  First we assume that $\|x\|_{p,c}<\infty$, that is $\alpha \in L_p(\N)$.  Let $z=v\beta$ be the polar
decomposition of $z\in E$ with $\|z\|_{p',c}\leq 1$, then $\beta\in
L_{p'}(\N)$ has norm less than 1. Let $p_n=1_{\{0\}\cup [\frac 1
    n,n]}(\alpha)$ and $q_n=1_{\{0\}\cup[\frac 1 n,n]}(\beta)$, then
as $x, z\in L_2(\M)$
$$\tau(z^*x)=\lim_n \tau \big(q_n \beta v^*u \alpha
p_n\big)=\lim_n\tau \big(q_n \beta \E(v^*u) \alpha
p_n\big)=\tau\big(\beta \E(v^*u)\alpha\big),$$ thus using lemma
\ref{kad} $|\tau(z^*x)|\leq \|\alpha\|_p
\|\beta\|_{p'}\leq\|\alpha\|_p$.

To get the reverse inequality even if $\alpha\notin L_p(\N)$, it suffice to consider $z_n=up_n
\alpha^{p-1}/\|p_n\alpha\|_p^{p-1}\in E$ and let $n\to \infty$ if $1\leq p<\infty$.  For
$p=\infty$, one can take $z_n=u a_n$ where $a_n\in \N$ is of norm 1 in
$L_1(\N)$ with support in $1_{\alpha>\frac 1 n}$ and norming $\alpha$
at the limit.

\end{proof}

\section{Sums of free variables}

\subsection{Basic facts}

 A convenient way for us to look at sums of free variables is to see
 them as elements of length 1 in the free product
 $(\M,\tau)=\ast_{i=1...N}(\M_i,\tau)$.  The trace preserving
 conditional expectation from $\M$ to $\M_i$ will be denoted by
 $\E_i$.

 For $1\leq p\leq \infty$, we denote by $E_p$ the closure of the span
 of words of length 1 in $L_p(\M)$. They are $\N$-bimodules and it is
 well known that the natural orthogonal projection from $L_p(\M)$ to
 $E_p$ is bounded and of norm less than 4 (see \cite{RX} or \cite{JPX}).

 We recall that we use the notation $\|x\|_p$ for the $L_p$-norm
 without referring to the underlying algebra as it does not depend on
 it by the compatibility we impose on traces.

 Any $x\in E_2$ can be decomposed as $x=\sum_i x_i$ where $x_i\in
 \mathring L_2(\M_i)$ and with $\|x\|_2^2=\sum_i \|x_i\|^2_2$.

  The conditioned column norm we introduced in the previous section
  can be expanded in terms of $x_i$'s as $\E(x^*x)=\sum_i \E
  (x_i^*x_i)$.
\begin{defi} For $x\in E_2$ and $p\in [1,\infty]$,  we denote 
$$\| x\|_{p,c}=\Big\| \Big(\sum_i \E (x_i^*x_i) \Big)^{1/2} \Big\|_p,\qquad \|
  x\|_{p,r}=\Big\| \Big(\sum_i \E (x_ix_i^*) \Big)^{1/2} \Big\|_p,\qquad \|
  x\|_{p,d}= \big\|\sum_i x_i\otimes e_i \big\|_{ L_p(\M\overline \otimes
  \ell_\infty)},$$
where $(e_i)$ stands for the standard basis of $\ell_\infty$.
\end{defi}
Note that $\|x\|_{p,r}=\|x^*\|_{p,c}$, that allows to deduce easily
results for $\|.\|_{p,r}$ from those for $\|.\|_{p,c}$.
Of course, $\|x\|_{p,d}=\big(\sum_i \| x_i\|_p^p \big)^{1/p}$ when $1\leq p<\infty$ with the usual modification for $p=\infty$.

Viewing $E_2$ as a subspace of $L_2(\M)$, we have a notion of
$\E$-polar decomposition given by Lemma \ref{polar}. More precisely, for any
$x=\sum_i x_i\in E_2$ there is a decomposition $x=u(\E x^* x)^{1/2}$
where $u\in E_2$ satisfies $\|u\|_{\infty,c}\leq 1$ and $\E
x^*x=\sum_i \E x_i^*x_i \in L_1(\N)$. Moreover $s(\E x^*x)\leq \E
u^*u\leq 1$.

Lemma \ref{duality} can also be made more precise in our present
context and we obtain:
\begin{lemma}\label{duality2}
Let $1\leq p\leq \infty$ and $x=\sum_i x_i \in E_2$  then
$$\|x\|_{p,c} =\sup_{z\in E_2, \|z\|_{p',c}\leq 1} |\tau ( z^*x)|,$$
where $\tau(z^* x)=\sum_i \tau(z_i^*x_i)$.
\end{lemma}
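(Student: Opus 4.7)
The plan is to apply Lemma \ref{duality} to the right $\N$-submodule $E = E_2$ of $L_2(\M)$, and then separately identify the pairing $\tau(z^* x)$ appearing there with the explicit series $\sum_i \tau(z_i^* x_i)$. Since $E_2$ is the closed span of length-one words, it is in fact an $\N$-bimodule, hence a right $\N$-submodule, and Lemma \ref{duality} immediately yields
$$\|x\|_{p,c} = \sup_{z \in E_2,\; \|z\|_{p',c} \le 1} |\tau(z^* x)|.$$

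It remains to compute $\tau(z^* x)$ for $x = \sum_i x_i$ and $z = \sum_j z_j$ with $x_i, z_j \in \mathring L_2(\M_i)$. Reading $\tau(z^* x)$ as the Hilbert space inner product $\langle z, x\rangle_{L_2(\M)}$ and using bilinearity together with the $L_2$-convergence of both series, one obtains
$$\tau(z^* x) = \sum_{i, j} \tau(z_j^* x_i).$$
The substantive point is that all off-diagonal terms vanish. For $i \ne j$, the product $z_j^* x_i$ lies in $L_1(\M)$ by H\"older and, being the $L_2 \times L_2 \to L_1$ limit of products of centered elements of $\M_j$ and $\M_i$, satisfies $\E(z_j^* x_i) = 0$ by the defining property of the amalgamated free product (the conditional expectation vanishes on alternating centered words of length $\ge 1$, and this extends from $\mathring \M_j \cdot \mathring \M_i$ to $\mathring L_2(\M_j) \cdot \mathring L_2(\M_i)$ by continuity of $\E$ on $L_1$). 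Hence $\tau(z_j^* x_i) = \tau_\N(\E(z_j^* x_i)) = 0$.

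Only the diagonal survives, giving $\tau(z^* x) = \sum_i \tau(z_i^* x_i)$ as claimed. I do not foresee any real obstacle: the statement is essentially a specialisation of Lemma \ref{duality}, with the additional input being the $L_2$-orthogonality of the subspaces $\mathring L_2(\M_i)$ — a direct consequence of the amalgamated free product construction already recalled in the preliminaries.
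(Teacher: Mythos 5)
Your proposal is correct and follows exactly the route the paper intends: the paper states Lemma \ref{duality2} as an immediate specialisation of Lemma \ref{duality} to the $\N$-bimodule $E_2$, with the identification $\tau(z^*x)=\sum_i\tau(z_i^*x_i)$ resting on the $L_2$-orthogonality of the subspaces $\mathring L_2(\M_i)$, which the paper records just before. Your verification of that orthogonality via $\E$ vanishing on alternating centered words and $L_1$-continuity is the standard argument and is sound.
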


We recall the Voiculescu inequality which is our fundamental tool. It
was first proved in \cite{V} when $\N=\bC$ and with amalgamation in
\cite{J1}. Any element in $x\in E_\infty\subset E_2$ can be written as
$x=\sum_i x_i$ where actually $x_i=\E_i x\in \mathring \M_i$ and the
sum converges in $L_2$. We have the following:
\begin{thm}[Voiculescu]
Let $x=\sum_i x_i \in E_\infty$, then
$$ \max\{\| x\|_{\infty,c}, \,\| x\|_{\infty,r}, \| x\|_{\infty,d}\}
\leq \|x\|_\infty \leq \| x\|_{\infty,c} +\| x\|_{\infty,r}+ \|
x\|_{\infty,d}.$$
\end{thm}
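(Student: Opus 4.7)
My plan is to treat the two inequalities separately. The lower bound is elementary and relies on freeness together with the contractivity of conditional expectations. For the diagonal term, since $x_i = \E_i x$ and $\E_i:\M\to\M_i$ is contractive, $\|x_i\|_\infty \leq \|x\|_\infty$ for every $i$, so $\|x\|_{\infty,d} = \sup_i \|x_i\|_\infty \leq \|x\|_\infty$. For the column term I use that when $i\neq j$, the product $x_i^* x_j$ is an alternating length-two word sitting in $\mathring{\M_i}\otimes_\N \mathring{\M_j}$, hence $\E(x_i^* x_j) = 0$; this gives $\sum_i \E(x_i^* x_i) = \E(x^*x)$, and the operator inequality $x^*x \leq \|x\|_\infty^2\,1$ combined with positivity of $\E$ yields $\|x\|_{\infty,c}^2 = \|\E(x^*x)\|_\infty \leq \|x\|_\infty^2$. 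The row bound then follows by taking adjoints.

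The upper bound is Voiculescu's original inequality and is the substantive direction. The strategy I would follow is the amalgamated free Fock space model. Setting $\H_i := \mathring{L_2(\M_i)}$ viewed as an $\N$-$\N$-bimodule, the algebra $(\M,\tau)$ has a canonical vacuum-preserving representation on the amalgamated Fock space $\F_\N := \N \oplus \bigoplus_{n\geq 1}\bigoplus_{i_1\neq\cdots\neq i_n} \H_{i_1}\otimes_\N\cdots\otimes_\N \H_{i_n}$. In this representation each $x_i\in\mathring{\M_i}$ splits canonically into three pieces: a left-creation operator $\ell(\xi_i)$, where $\xi_i = x_i\cdot 1 \in \H_i$ encodes the column structure; an annihilation operator $\ell(\eta_i)^*$, where $\eta_i = x_i^*\cdot 1$ encodes the row structure; and a ``diagonal'' piece $d_i$ acting nontrivially only on the part of $\F_\N$ whose first letter is in $\H_i$. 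A direct inner-product computation on $\F_\N$ yields $\|\sum_i \ell(\xi_i)\|_\infty^2 = \|\sum_i\lip\xi_i,\xi_i\rip_\N\|_\infty = \|\sum_i\E(x_i^*x_i)\|_\infty = \|x\|_{\infty,c}^2$, the analogous identity for annihilators gives $\|\sum_i \ell(\eta_i)^*\|_\infty = \|x\|_{\infty,r}$, and the diagonal pieces act on disjoint orthogonal subspaces of $\F_\N$ (indexed by the first letter) so that $\|\sum_i d_i\|_\infty = \sup_i\|d_i\|_\infty \leq \|x\|_{\infty,d}$. Summing the three contributions gives the desired bound.

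The main obstacle is the precise bookkeeping of the splitting $x_i = \ell(\xi_i) + \ell(\eta_i)^* + d_i$ and the verification that the creation norm collapses to the column norm with the correct constant: one must check that for $\xi_i\in\H_i$, the operators $\ell(\xi_i)$ behave, in aggregate, like a single creation operator whose symbol $\sum_i \xi_i\otimes e_i$ lives in an appropriate right Hilbert $\N$-module, so that the norm identity $\|\sum_i\ell(\xi_i)\|^2 = \|\sum_i\lip\xi_i,\xi_i\rip_\N\|_\infty$ holds in the amalgamated setting. The diagonal correction is the most delicate point, since it must absorb all the interactions between $\ell(\xi_i)$ and $\ell(\eta_i)^*$ on the $i$-th layer without introducing spurious terms; this is where the full strength of amalgamated freeness enters, and it is the reason the original proofs in \cite{V, J1} require a careful analysis of the action of $\mathring{\M_i}$ on $L_2(\M_i)$.
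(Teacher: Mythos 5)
The paper does not prove this theorem: it is stated and attributed to \cite{V} (trivial amalgamation) and \cite{J1} (amalgamated case), so there is no ``paper's own proof'' to compare against. Your proposal is a correct outline of the argument that actually appears in those references.

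Your lower bound is complete and correct. The identification $x_i=\E_i x$ together with contractivity of $\E_i$ gives $\|x\|_{\infty,d}\leq\|x\|_\infty$, and freeness over $\N$ kills the cross terms $\E(x_i^*x_j)$ for $i\neq j$, so $\E(x^*x)=\sum_i\E(x_i^*x_i)\leq\|x\|_\infty^2\cdot 1$; the row estimate is the adjoint.

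Your upper bound follows the canonical route: realize $\M$ on the amalgamated Fock space $\F_\N$, split each $x_i$ into creation, annihilation, and a diagonal piece $d_i$ acting on the summands of $\F_\N$ whose first tensor factor is $\H_i$, and then exploit orthogonality. Two technical points you flag are indeed where the work lies, and both go through: (a) $\ell(\xi_i)^*\ell(\xi_j)=0$ for $i\neq j$ since $\H_i\perp\H_j$, so $(\sum_i\ell(\xi_i))^*(\sum_i\ell(\xi_i))=\sum_i\langle\xi_i,\xi_i\rangle_\N=\sum_i\E(x_i^*x_i)$, giving exactly the column norm; and (b) the diagonal part $d_i$ on a word with first letter $\eta_1\in\H_i$ acts by $\eta_1\mapsto P_{\H_i}(x_i\eta_1)$ tensored with the identity, a right-$\N$-module map of norm at most $\|x_i\|_\infty$, and distinct $d_i$ have orthogonal supports and ranges, so $\|\sum_i d_i\|=\sup_i\|d_i\|\leq\|x\|_{\infty,d}$. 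With the normalization $\xi_i=x_i\cdot 1$, $\eta_i=x_i^*\cdot 1$ one must also check $d_i$ vanishes on the vacuum (it does, since the vacuum action is purely creation), so the three pieces sum to $x_i$. In short, your sketch is the standard proof and is correct as an outline, with the obstacles you identify being exactly the ones resolved in \cite{V,J1}.
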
 
It will be convenient to write for $x\in E_\infty$:
$$\|x\|_{\infty,\cap}=\max\{\| x\|_{\infty,c}, \,\| x\|_{\infty,r},
\| x\|_{\infty,d}\}.$$

Our main goal is to find a version of these inequalities for general
fully symmetric spaces. Using duality one obtains an estimate of 
the norm of sums of free variables in $L_1$ given by an infimum.

\subsection{Algebraic decompositions}

The heart of our argument is to obtain an algebraic
decomposition in the spirit of \cite{C} where it was done to study
Khinchin inequalities. The idea is to look for a decomposition of a
sum of free variables in $L_\infty$ that is optimal for a variant of
the dual norm of $\|.\|_{\infty, \cap}$. We first justify that such
decompositions do exist.

\begin{prop}\label{infat}
Let $x\in E_\infty$ then there exists a decomposition $x=a+d+b$ with
$a,d,b\in E_2$ such that
$$\inf_{\substack{x=\alpha+\gamma+\beta\\\alpha,\beta,\gamma\in E_2}} \| \alpha\|_{1,c}+\|
\gamma\|_{1,d}+\|\beta\|_{1,r}= \| a\|_{1,c}+\| d\|_{1,d}+\|b\|_{1,r}.$$
\end{prop}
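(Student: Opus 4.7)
The plan is to extract a convergent subsequence from a minimizing sequence via weak-$*$ compactness. First, the infimum $\iota$ is finite: the trivial decomposition $x = x + 0 + 0$ with $x$ in the column slot satisfies $\|x\|_{1,c} \leq \|x\|_\infty$, since $\E(x^*x) \leq \|x\|_\infty^2 \cdot 1$ and $\tau$ is a probability trace. Pick a minimizing sequence $(a_n, b_n, d_n)_{n \geq 1}$ in $E_2^3$ with $a_n + b_n + d_n = x$ and $S_n := \|a_n\|_{1,c} + \|b_n\|_{1,r} + \|d_n\|_{1,d}$ decreasing to $\iota$.

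The crucial observation is that by Lemma \ref{duality2} and its row and diagonal analogues, each of the three norms is a dual norm on $E_2$: for instance,
$$\|a\|_{1,c} = \sup_{z \in E_2,\ \|z\|_{\infty, c} \leq 1} |\tau(z^* a)|,$$
with similar formulas for $\|\cdot\|_{1,r}$ and for $\|\cdot\|_{1,d}$ (the latter reduces to the standard $\ell_1$--$\ell_\infty$ duality in the decomposition $E_2 = \bigoplus_i \mathring L_2(\M_i)$). Passing to Banach-space completions, these identifications realize each norm as the dual of an appropriate $\|\cdot\|_{\infty, *}$ norm, so unit balls are weak-$*$ compact by Banach--Alaoglu. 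A diagonal extraction then yields a common subsequence along which $a_n \to a$, $b_n \to b$, $d_n \to d$ in the respective weak-$*$ topologies.

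To finish, test against an arbitrary $z \in E_\infty$: by Voiculescu's inequality, $z$ has finite $\|\cdot\|_{\infty, c}$, $\|\cdot\|_{\infty, r}$, and $\|\cdot\|_{\infty, d}$ norms, hence $\tau(z^* a_n) \to \tau(z^* a)$ and similarly for $b_n, d_n$. Summing and using $a_n + b_n + d_n = x$ yields $\tau(z^*(a + b + d)) = \tau(z^* x)$ for all $z \in E_\infty$; a density argument then gives $a + b + d = x$ in $E_2$. By weak-$*$ lower semi-continuity of dual norms, $\|a\|_{1,c} + \|b\|_{1,r} + \|d\|_{1,d} \leq \liminf_n S_n = \iota$, and equality holds by the definition of $\iota$.

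The main obstacle is to rigorously identify the three completions as genuine dual Banach spaces so that the weak-$*$ limits truly live inside $E_2$ rather than only in a bidual. For the column and row norms this rests on the duality $L_1(\M, \E) = L_\infty(\M, \E)^*$ encoded in Lemma \ref{duality2} (together with the polar decomposition of Lemma \ref{polar}). The diagonal case is more delicate because $L_1(\M \overline{\otimes} \ell_\infty)$ is itself a predual rather than a dual; one handles it by a coordinate-wise diagonal extraction in the summands $\mathring L_2(\M_i)$, combined with a tail control supplied by the uniform bound on $\|d_n\|_{1,d}$ and lower semi-continuity.
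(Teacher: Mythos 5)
Your plan identifies the right obstacle but does not resolve it, and the way you phrase the hoped-for resolution is in fact backward: Lemma \ref{duality2} realizes $(E_2,\|\cdot\|_{1,c})$ as isometrically embedded into the dual of $(E_2,\|\cdot\|_{\infty,c})$, but it does not make the $\|\cdot\|_{1,c}$-completion a dual Banach space. The situation is exactly as for $L_1(\M)$ and $\M$: $L_1$ is a predual, not a dual, and $\M^*$ is strictly larger than $L_1(\M)$. So Banach--Alaoglu in $L_\infty(\M,\E)^*$ would produce weak-$*$ limits that a priori live in that (much larger) dual, not in the $\|\cdot\|_{1,c}$-completion of $E_2$, and certainly not in $E_2$ itself. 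This is fatal for two reasons: you cannot conclude $a,b,d\in E_2$ (which the proposition explicitly asserts), and the ``density argument'' to recover $a+b+d=x$ presupposes that $a,b,d$ are honest measurable operators rather than abstract bidual elements. The diagonal component has the same defect; the coordinate-wise extraction you sketch only controls finitely many coordinates at a time and does not by itself force the limit into $E_2$.

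The paper sidesteps all of this with a truncation step (Lemma \ref{L2borne}) that is the real content of the proof and is missing from your proposal. Given any decomposition of $x\in E_\infty$, one cuts off each piece with spectral projections of $(\E\alpha^*\alpha)^{1/2}$ and $(\E\beta\beta^*)^{1/2}$ at level $\|x\|_\infty$, and folds the removed parts back into the other summands. This produces a new decomposition that is no worse in each $\|\cdot\|_{1,\bullet}$-norm \emph{and} satisfies uniform bounds $\|a\|_{\infty,c},\|b\|_{\infty,r},\|d\|_2\le 5\|x\|_\infty$, hence a uniform $L_2$-bound. One can then pass to weak limits in the Hilbert space $L_2(\M)$ (reflexive, so no weak-$*$ subtleties), upgrade to norm convergence via Mazur's lemma, and conclude by continuity of the identity $(E_2,\|\cdot\|_2)\to(E_2,\|\cdot\|_{1,\bullet})$ and lower semicontinuity along the convex combinations. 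The $L_2$-weak compactness is what guarantees the limits stay in $E_2$, which is the $L_2$-closed span of words of length one. Without some version of this a priori truncation, no softer compactness argument in the $\|\cdot\|_{1,\bullet}$-topologies will place the limit where you need it.
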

Before going into the proof, we prove an intermediate lemma
\begin{lemma}\label{L2borne}
Let $x\in E_\infty$ such that $x=\alpha+\gamma+\beta$ with $\alpha,\gamma,\beta \in E_2$,
then there exists another decomposition $x=a+d+b$ with $a,d,b\in E_2$
such that $\|a\|_{1,c}\leq \|\alpha\|_{1,c}$, $\|b\|_{1,r}\leq
\|\beta \|_{1,r}$, $\|d\|_{1,d}\leq \|\gamma\|_{1,d}$ and $\|a\|_{\infty,c},
\|b\|_{\infty,r},\|d\|_2\leq 5\|x\|_\infty$.
\end{lemma}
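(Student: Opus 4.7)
The plan is to construct $(a,d,b)$ explicitly by truncating the column part of $\alpha$ and the row part of $\beta$ at a level $K \asymp \|x\|_\infty$, and letting $d := x-a-b$.

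Set $\sigma_\alpha := (\E\alpha^*\alpha)^{1/2}$, $\sigma_\beta := (\E\beta\beta^*)^{1/2}$ (well-defined positive elements of $L_2(\N)$), and fix $K := 2\|x\|_\infty$. Let $P := 1_{[0,K]}(\sigma_\alpha)$ and $Q := 1_{[0,K]}(\sigma_\beta)$, both spectral projections in $\N$. Define
\[
a := \alpha P, \qquad b := Q\beta, \qquad d := x - a - b = \alpha(1-P) + \gamma + (1-Q)\beta.
\]
Since $P \in \N$ commutes with $\sigma_\alpha$, a direct computation gives $\E(a^*a) = P\E(\alpha^*\alpha)P = P\sigma_\alpha^2 P = (\sigma_\alpha P)^2$, so $(\E a^*a)^{1/2} = \sigma_\alpha P$, immediately yielding $\|a\|_{\infty,c} = \|\sigma_\alpha P\|_\infty \le K$ and $\|a\|_{1,c} = \|\sigma_\alpha P\|_1 \le \|\sigma_\alpha\|_1 = \|\alpha\|_{1,c}$. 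The symmetric computation handles $b$. For $\|d\|_2$, the triangle inequality combined with $\|a\|_2^2 = \tau(\sigma_\alpha^2 P) \le K^2$ (and likewise for $b$) gives $\|d\|_2 \le \|x\|_2 + \|a\|_2 + \|b\|_2 \le \|x\|_\infty + 2K = 5\|x\|_\infty$.

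The main obstacle is the diagonal estimate $\|d\|_{1,d} \le \|\gamma\|_{1,d}$. A naive triangle inequality on $d = \alpha(1-P) + \gamma + (1-Q)\beta$ is useless, since the tail pieces $\alpha(1-P)$ and $(1-Q)\beta$ may have unbounded diagonal $L_1$-norm. To close the estimate I would exploit the $L_\infty$-boundedness of $x$ through the identities
\[
\alpha(1-P) = x(1-P) - \gamma(1-P) - \beta(1-P), \qquad (1-Q)\beta = (1-Q)x - (1-Q)\alpha - (1-Q)\gamma.
\]
Substituting both and rearranging yields the alternative decomposition $x = Q\alpha P + \bigl[Q\gamma P + x(1-P) + (1-Q)xP\bigr] + Q\beta P$, in which the $\gamma$-contribution is concentrated in $Q\gamma P$ (which trivially satisfies $\|Q\gamma P\|_{1,d} \le \|\gamma\|_{1,d}$ by $\N$-bimodule contractivity). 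The residual error terms $x(1-P)$ and $(1-Q)xP$ involve only $x$ and are supported on projections of small trace, $\tau(1-P) \le \|\alpha\|_{1,c}/K$ and $\tau(1-Q) \le \|\beta\|_{1,r}/K$ (Chebyshev on $\sigma_\alpha,\sigma_\beta$); the closing step is to absorb these back into the column/row pieces $a$ and $b$ at the cost of the universal constant $5$ on the $L_\infty/L_2$ side, while keeping the three $L_1$-budgets $\|\alpha\|_{1,c}, \|\beta\|_{1,r}, \|\gamma\|_{1,d}$ intact. Balancing these absorptions — so that a bounded fragment of $x$ transferred into $a$ does not spoil its $L_{1,c}$-budget, and similarly for $b$ — is the main technical delicacy of the proof.
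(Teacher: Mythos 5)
You converge, after your pivot, on essentially the same decomposition as the paper: with the spectral truncations $P=1_{[0,K]}\big((\E\alpha^*\alpha)^{1/2}\big)$ and $Q=1_{[0,K]}\big((\E\beta\beta^*)^{1/2}\big)$, the split you describe is $a=Q\alpha P+x(1-P)$, $d=Q\gamma P$, $b=Q\beta P+(1-Q)xP$, which is exactly the paper's $a=f\alpha e+x(1-e)$, $d=f\gamma e$, $b=f\beta e+(1-f)xe$ up to relabelling. However, you never actually close the argument. The estimate $\|Q\alpha P+x(1-P)\|_{1,c}\leq\|\alpha\|_{1,c}$ is precisely what you defer as ``the main technical delicacy of the proof'' --- but that step \emph{is} the content of the lemma; everything else is routine. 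The mechanism is not a Chebyshev bound on $\tau(1-P)$: using that $\|\cdot\|_{1,c}$ is a norm (Lemma \ref{duality2}), operator monotonicity of the square root, and $\E x^*x\leq\|x\|_\infty^2$, one gets
\[
\|Q\alpha P\|_{1,c}\leq\tau\big(P(\E\alpha^*\alpha)^{1/2}\big),\qquad \|x(1-P)\|_{1,c}\leq\tau\big((1-P)\|x\|_\infty\big),
\]
and these recombine to $\tau\big((\E\alpha^*\alpha)^{1/2}\big)=\|\alpha\|_{1,c}$ because $(\E\alpha^*\alpha)^{1/2}\geq K\geq\|x\|_\infty$ on the range of $1-P$. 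Two further points. Your $L_2$-bound $\|d\|_2\leq 5\|x\|_\infty$ is computed for the naive $d=x-\alpha P-Q\beta$, which does \emph{not} satisfy $\|d\|_{1,d}\leq\|\gamma\|_{1,d}$ and must be redone for the actual $d=Q\gamma P$ via $\|d\|_2\leq\|x\|_2+\|a\|_2+\|b\|_2$ with $\|a\|_2\leq\|a\|_{\infty,c}$. And your choice $K=2\|x\|_\infty$ yields $\|a\|_{\infty,c}\leq 3\|x\|_\infty$ after absorbing $x(1-P)$, hence $\|d\|_2\leq 7\|x\|_\infty$; to get the stated constant $5$ you should take $K=\|x\|_\infty$.
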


\begin{proof}
Set $M=\|x\|_\infty$ and define $e=1_{[0,M]}\big((\E
\alpha^*\alpha)^{1/2}\big)\in \N$, $f=1_{[0,M]}\big((\E \beta \beta ^*)^{1/2}\big)\in
\N$. We consider the decomposition
$$x= a+d+b= (f\alpha e+x(1-e)) + f\gamma e + (f\beta e +(1-f)xe).$$ Note that
$(f\alpha e)^*(f\alpha e)\leq e(\alpha^*\alpha)e$. By the operator monotony of the
square root $$\big(\E(f\alpha e)^*(f\alpha e)\big)^{1/2} \leq \big(e(\E
\alpha^*\alpha)e\big)^{1/2}=e(\E \alpha^*\alpha)^{1/2}.$$ Since $\|.\|_{1,c}$ is a
norm by Lemma \ref{duality2}:
$$\| a\|_{1,c}\leq \| f\alpha e \|_{1,c}+ \|x(1-e)\|_{1,c}\leq \tau \big( e
(\E \alpha^*\alpha)^{1/2}\big)+ \tau \big((1-e) M \big)\leq \tau \big((\E
\alpha^*\alpha)^{1/2}\big).$$ Similarly $\|b\|_{1,r}\leq \|\beta \|_{1,r}$ and
$\|d\|_{1,d}\leq \|\gamma\|_{1,d}$ is obvious.
 
By the same argument, it is also clear that $\|a\|_{\infty, c},
\|b\|_{\infty, r}\leq 2M$ (as $\|x\|_{\infty,c}\leq \|x\|_\infty$),
hence $\|a\|_2,\|b\|_2\leq 2M$.  Thus by the triangle inequality
$\|d\|_{2}\leq 5 M$.
\end{proof}

\begin{proof}[Proof of Proposition \ref{infat}]
Take a sequence of decompositions $x=a_n+d_n+b_n$ which are optimal up
to $\frac 1n$. By Lemma \ref{L2borne} we can assume that they are
uniformly bounded in $L_2$. By taking subsequences and convex
combinations, we may obtain new sequences $(a_n)$, $(b_n)$, $(d_n)$
converging in $L_2$ to $a,b,d$ with the same properties. We must have
$x=a+d+b$. As the identity map from $(E_2,\|.\|_2)$ to
$(E_2,\|.\|_{1,c})$ is continuous (and similarly for $\|.\|_{1,r}$ and
$\|.\|_{1,d}$), we can conclude that $a,b,d$ achieve the infimum.
\end{proof}

For $x\in E_2$, we set
$$\|x\|_{1,\Sigma}=\inf_{x=a+d+b; a,b,d\in E_2} \| a\|_{1,c}+\|
d\|_{1,d}+\|b\|_{1,r}.$$

On $E_2$, we have several norms $\|.\|_{1,\bullet}$ for
$\bullet\in\{c,r,d\}$ as well as $\|.\|_1$.  The Voiculescu inequality
with Lemma \ref{duality2} gives that for $x\in E_2$, $\|x\|_1\leq
C\|x\|_{1,\bullet}$ for some constant $C$.

Assume for now that $N$ is finite (we add an exponent $N$ to
emphasize it). Since $\mathring L_1(\M_i)$ is 2-complemented in
$L_1(\M)$, $\|.\|_1$ and $\|.\|_{1,d}$ are equivalent on $E_2^N$.
From all these facts, it follows that $\|.\|_{1,\Sigma}$ is also a
norm on $E_2^N$ equivalent to $\|.\|_1$ (with constants possibly
depending on $N$). Thus the dual of $(E_2^N,\|.\|_{1,\Sigma})$ is
isomorphic to the dual of $(E_2^N,\|.\|_{1})$ which is $E_\infty^N$ as
a vector space with the usual duality bracket because $E_1$ is
complemented in $L_1$ by the orthogonal projection and $E_2$ is dense in $E_1$
for the $L_1$-norm.

Before stating the duality, we need an extra norm on $E_\infty$ given
by, for $x=\sum_i x_i\in E_\infty$, $$\|x\|_{\infty,\mathring d}= \sup_i
\inf_{e\in \N} \|x_i+e\|_\infty.$$ We clearly have
$\|x\|_{\infty,\mathring d}\leq \|x\|_{\infty,d}\leq 2\|
x\|_{\infty,\mathring d}$.

\begin{lemma}\label{duality3}
  The dual of $(E_2,\|.\|_{1,\Sigma})$ is $E_\infty$ with the norm
  $\|.\|_{\infty,\mathring \cap}=\max\{\|.\|_{\infty,
    c},\|.\|_{\infty, r}, \|.\|_{\infty, \mathring d}\}$ and
  anti-duality bracket $\langle z,x\rangle=\tau(z^*x)$ for $x\in E_2$
  and $z\in E_\infty$.
  \end{lemma}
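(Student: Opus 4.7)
The plan is to compute the dual norm explicitly, since the preceding paragraph has already identified the dual of $(E_2,\|.\|_{1,\Sigma})$ with $E_\infty$ as a vector space via the bracket $\langle z,x\rangle=\tau(z^*x)$. So the task reduces to showing
\[
\|z\|_{\infty,\mathring\cap}=\sup\bigl\{|\tau(z^*x)|:\,x\in E_2,\ \|x\|_{1,\Sigma}\leq 1\bigr\}.
\]

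For the upper bound (dual norm $\leq \|z\|_{\infty,\mathring\cap}$), I would take any decomposition $x=a+d+b$ with $a,b,d\in E_2$, split $\tau(z^*x)=\tau(z^*a)+\tau(z^*d)+\tau(z^*b)$, and handle each piece. The terms $\tau(z^*a)$ and $\tau(z^*b)$ are controlled by Lemma \ref{duality2}, giving $|\tau(z^*a)|\leq \|z\|_{\infty,c}\|a\|_{1,c}$ and $|\tau(z^*b)|\leq \|z\|_{\infty,r}\|b\|_{1,r}$. For the diagonal term I write $d=\sum_i d_i$ with $d_i\in\mathring L_2(\M_i)$; since $\E d_i=0$, for any $e\in\N$ we have $\tau(z_i^*d_i)=\tau((z_i+e)^*d_i)$, hence $|\tau(z_i^*d_i)|\leq \bigl(\inf_{e\in\N}\|z_i+e\|_\infty\bigr)\|d_i\|_1\leq \|z\|_{\infty,\mathring d}\|d_i\|_1$. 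Summing in $i$ yields $|\tau(z^*d)|\leq \|z\|_{\infty,\mathring d}\|d\|_{1,d}$; combining the three estimates and taking the infimum over decompositions gives the desired bound.

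For the reverse inequality, I would show each of $\|z\|_{\infty,c}$, $\|z\|_{\infty,r}$, $\|z\|_{\infty,\mathring d}$ is bounded above by the dual norm. The column and row cases are immediate from Lemma \ref{duality2}: any $a\in E_2$ with $\|a\|_{1,c}\leq 1$ satisfies $\|a\|_{1,\Sigma}\leq 1$ via the trivial decomposition $a=a+0+0$, and similarly for the row norm. For the $\mathring d$ piece, fix $i$ and test against $x=d_i$ with $d_i\in\mathring L_2(\M_i)$; the decomposition $0+d_i+0$ gives $\|d_i\|_{1,\Sigma}\leq\|d_i\|_{1,d}=\|d_i\|_1$, so the dual norm dominates $\sup_{d_i\in\mathring L_2(\M_i),\,\|d_i\|_1\leq 1}|\tau(z_i^*d_i)|$. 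By the standard identification of $\bigl(\mathring L_1(\M_i)\bigr)^*$ with the quotient $\M_i/\N$ (the annihilator of $\mathring L_1(\M_i)$ in $\M_i$ under the pairing $(y,d)\mapsto\tau(y^*d)$ is exactly $\N$), together with the $L_1$-density of $\mathring L_2(\M_i)$ in $\mathring L_1(\M_i)$, this supremum equals $\inf_{e\in\N}\|z_i+e\|_\infty$. Taking the sup over $i$ finishes the proof.

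The conceptually delicate point is the $\mathring d$ part: one must notice that the constraint $d\in E_2$ forces $\E d_i=0$, which is precisely what allows freely shifting $z_i$ by any element of $\N$ in the pairing and explains why the correct ``diagonal'' component of the dual norm is $\|\cdot\|_{\infty,\mathring d}$ rather than $\|\cdot\|_{\infty,d}$. Everything else is a routine packaging of Lemma \ref{duality2} together with elementary Banach-space duality.
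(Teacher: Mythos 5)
Your approach is essentially the same as the paper's: both compute the dual of the sum norm $\|\cdot\|_{1,\Sigma}$ as the maximum of the three component dual norms, with Lemma \ref{duality2} handling the column and row pieces; you spell out the quotient duality $(\mathring L_1(\M_i))^*\cong \M_i/\N$ for the $\mathring d$ piece, which the paper simply declares ``clear.'' Your upper and lower bounds together re-derive the sum-norm/max-norm duality explicitly rather than citing it, which is fine.

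The one thing you have glossed over is the case $N=\infty$. The vector-space identification of the dual of $(E_2,\|\cdot\|_{1,\Sigma})$ with $E_\infty$ that you invoke from the preceding paragraph was established there only for $N$ finite (it relies on the $N$-dependent equivalence of $\|\cdot\|_{1,d}$ and $\|\cdot\|_1$). The paper closes this by noting that the dual-norm identification together with Voiculescu's inequality yields an equivalence $\|\cdot\|_{1,\Sigma}\approx\|\cdot\|_1$ on $E_2^N$ with constants independent of $N$, and then passes to infinite free products via the $\|\cdot\|_2$-density of $\cup_{N\in\bN^*}E_2^N$ in $E_2$. You should add that one-line density argument to cover $N=\infty$; the rest of your proof is correct as written.
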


\begin{proof}

  We have already justified the duality as vector spaces assuming $N$
  finite.  For the identification of the dual norm, let $x\in
  E_\infty^N$.  Using that $\|.\|_{1,\Sigma}$ corresponds to a sum
  norm, its dual norm is given by a supremum. By Lemma \ref{duality2},
  the dual norm on $E_\infty^N$ of $(E_2^N, \|.\|_{1,c})$ is exactly
  $\|.\|_{\infty,c}$.  The same holds for the row norm and the dual
  norm on $E_\infty^N$ of $(E_2^N, \|.\|_{1,d})$ is clearly
  $\|.\|_{\infty,\mathring d}$.
  
 Using the Voiculescu inequality once more justifies that
 $\|.\|_{1,\Sigma}$ and $\|.\|_1$ are equivalent on $E_2^N$ with some
 universal constant independent of $N$.

If we are looking at infinite free products, as $\cup_{N\in \bN^*}
E_2^N$ is dense in $E_2$ for the norm $\|.\|_2$ (bigger than both
$\|.\|_{1,\Sigma}$ and $\|.\|_1$) , $\|.\|_{1,\Sigma}$ and $\|.\|_1$
are also equivalent on $E_2$. Thus Lemma \ref{duality3} also holds for
$N=\infty$.
\end{proof}

We choose this approach to avoid looking at the dual of
$(E_2,\|.\|_{1,c})$ which may be hard to describe.

As in \cite{J1}, we will need another algebraic construction to make
the variable symmetric before finding the algebraic decomposition. We
consider the algebras $\tilde \M_i= \M_i\oplus \M_i$ with trace
$\tau((x,y))=\frac 12 (\tau(x+y))$.  Clearly $(\N,\tau)$ is identified
to a subalgebra of $(\tilde \M_i,\tau)$ by $n\mapsto (n,n)$. We simply
write $\N\subset \tilde \M_i$ not referring to the inclusion map. As
before the letter $\tau$ is used for traces on different algebras but
this is compatible with our identifications and leads to no confusion.

We consider the free product $(\tilde\M,\tau)= *_{i,\N}( \tilde
\M_i,\tau)$ with conditional expectation $\tilde \E : \tilde \M \to
\N$. Thus for $(x,y)\in \tilde \M_i$, $\tilde \E(x,y)=\frac 12 \E
(x+y)$.

The spaces of words of length one corresponding to $\tilde \M$ in $L_p$ will be
denoted by $\tilde E_p$.

For $z_i\in \M_i$, we write $\pi(z_i)=(z_i,-z_i)\in \tilde \M_i$. We
have that for $z=\sum_i z_i\in E_\infty$,
$$\E \sum_i z_i^*z_i= \tilde \E \sum_i \pi(z_i)^*\pi(z_i).$$ In
particular using the Voiculescu inequality, this allows to extend $\pi
:(E_\infty,\|.\|_\infty)\to (\tilde E_\infty\|.\|_\infty)$ by
$\pi(z)=\sum_i \pi(z_i)$ as a bounded map
with$\|z\|_{\infty,\bullet}=\|\pi(z)\|_{\infty,\bullet}$ where
$\bullet\in \{r,c,d\}$. Similarly one can extend $\pi:E_2 \to \tilde
E_2$ to an isometry for the $L_2$-norms.

The swap maps $\S_i :\tilde \M_i\to\tilde \M_i$, $\S_i(x,y)=(y,x)$ are
normal trace preserving $*$-representations which are also
$\N$-bimodular. Thus the free product $\S=*_i \S_i$ extends to a
$*$-isomorphism $\S: \tilde\M\to \tilde \M$ which is isometric on all
$L_p(\tilde \M)$. Note that $\S(\pi(x))=-\pi(x)$ for $x\in E_\infty$.

We can conclude about the algebraic decomposition:
\begin{prop}\label{decl1}
  Let $x=\sum_i x_i\in E_\infty$ there exists a sequence $(u_i)$ with
  $u_i\in \M_i$ with
$$\Big\|\Big(\E(\sum_i u_i^*u_i)\Big)^{1/2}\Big\|_{\infty},\quad
  \Big\|\Big(\E(\sum_i u_iu_i^*)\Big)^{1/2}\Big\|_{\infty},\quad \sup_i
  \|u_i\|_{\infty}\leq 1$$ and $\alpha,\beta\in \N^+$ and sequences
  $(\gamma_i),\,(\delta_i)$ in $\M_i^+$ such that $x_i=u_i\alpha+\beta
  u_i+ u_i\gamma_i$, $u_i\gamma_i=\delta_i u_i$ and
$$s(\alpha)\leq \E |u|^2\leq 1, \; s(\beta) \leq \E |u^*|^2\leq 1,\;
  s(\gamma_i)\leq |u_i|^2\leq 1,\,;s(\delta_i)\leq |u_i^*|^2\leq 1. $$
\end{prop}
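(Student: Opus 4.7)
My plan combines the optimal decomposition from Proposition \ref{infat} with $\E$-polar and standard polar decompositions, using the symmetrization tools $\pi$ and $\S$ together with a duality argument (Lemma \ref{duality3}) to align three different polar structures through a common family $(u_i)$.

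First, I would apply Proposition \ref{infat} to $\pi(x)\in \tilde E_\infty$, obtaining an optimal triple $\pi(x)=a+d+b$ in $\tilde E_2$. Because $\S\pi(x)=-\pi(x)$ and the swap $\S$ is a trace-preserving $*$-isomorphism fixing $\N$, it preserves each of the three norms $\|\cdot\|_{1,c}$, $\|\cdot\|_{1,d}$, $\|\cdot\|_{1,r}$. Averaging $(a,d,b)$ with $(-\S a,-\S d,-\S b)$ yields by convexity another optimal decomposition that is $\S$-anti-invariant; such elements lie in the image of $\pi$, so the new triple descends to a decomposition $x=a+d+b$ in $E_2$ (reusing letters). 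A refinement of the truncation argument in Lemma \ref{L2borne} applied to this optimal decomposition allows us to further assume that $\|a\|_{\infty,c}$, $\|b\|_{\infty,r}$ and each $\|d_i\|_\infty$ are controlled by $\|x\|_\infty$, placing $\alpha:=(\E a^*a)^{1/2}$ and $\beta:=(\E bb^*)^{1/2}$ in $\N^+$ and each $|d_i|$ in $\M_i^+$. Then Lemma \ref{polar} applied to $a$ gives $a=u\alpha$ with $u=\sum_i u_i\in E_2$, $\|u\|_{\infty,c}\le 1$ and $s(\alpha)\le \E u^*u\le 1$.

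The remaining task is to show that the same $u_i$ can serve as the polar part of $b$ and of each $d_i$. For this I would invoke Lemma \ref{duality3}: Hahn--Banach applied to the optimal decomposition produces a common dual element $z\in E_\infty$ with $\|z\|_{\infty,\mathring\cap}\le 1$ simultaneously norming all three pieces. The column equality $\tau(z^*a)=\|\alpha\|_1=\tau(\alpha\E(z^*u))$, together with $\|\E(z^*u)\|_\infty\le 1$ from Lemma \ref{kad}, forces $\E(z^*u)$ to act as the identity on $s(\alpha)$. An analogous row computation, combined with a rigidity argument for the equality case in Lemma \ref{kad}, forces the $\E$-polar part of $b^*$ to equal $u^*$ rather than some independent partial isometry, so $b=\beta u^*$. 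For the diagonal piece, the standard polar $d_i=w_i|d_i|$ in $\M_i$, combined with the equality case of $\tau(z_i^*d_i)=\||d_i|\|_1$, identifies $w_i=u_i$. Setting $\gamma_i:=|d_i|$ and $\delta_i:=u_i\gamma_iu_i^*=|d_i^*|$ yields the intertwining $u_i\gamma_i=\delta_iu_i$ and all the support conditions.

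The main obstacle is the simultaneous alignment of three distinct polar structures. The swap symmetrization handles the column/row duality automatically, but matching the standard polar parts of the diagonal $d_i$ with the $\E$-polar part of $a$ relies on a delicate rigidity analysis of Cauchy--Schwarz-type inequalities (Lemma \ref{kad}) applied to the common dual element $z$. An additional subtlety is the stronger requirement $\sup_i\|u_i\|_\infty\le 1$, rather than just $\|u\|_{\infty,c}\le 1$, since $\E u_i^*u_i\le 1$ does not imply $u_i^*u_i\le 1$; forcing $u_i$ to be an $\M_i$-contraction really uses the standard-polar-in-$\M_i$ structure of $d_i$, and ultimately justifies why the diagonal part, rather than the column part, is what pins down the pointwise norms of the $u_i$'s.
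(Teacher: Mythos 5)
Your scaffolding is right — symmetrize with $\pi$ and $\S$, take the optimal decomposition from Proposition~\ref{infat} and make it $\S$-anti-invariant, apply polar decompositions, and close via the duality of Lemma~\ref{duality3} — but the way you pin down the common partial isometry is backwards, and this is where the proof actually breaks.

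You set $u$ to be the $\E$-polar part of $a$, and then try to force the polar parts of $b$ and $d_i$ to agree with this $u$ by looking at the equality case in Lemma~\ref{kad} with the Hahn--Banach dual element $z$. Trace this through: the column equality $\tau(z^*a)=\|\alpha\|_1$ forces $zp=up$ on $p=s(\alpha)$. The row equality, written correctly as $\tau(z^*b)=\tau(\beta\,\E(wz^*))=\|\beta\|_1$ where $b^*=w^*\beta$, forces $qw=qz$ on $q=s(\beta)$ — so $b=\beta z$, \emph{not} $b=\beta u$. You know $(u-z)p=0$ but nothing about $q(u-z)$, so there is no way to replace $z$ by $u$ in $b=\beta z$. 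The same obstruction appears in the diagonal piece. The paper avoids this entirely by declaring $u$ to be the (symmetrized) dual element itself: all three rigidity arguments then express the polar parts $v,w,z_i$ of $a,b,d_i$ as restrictions of this single $u$, which is the logically correct direction. Your construction can be repaired in one line — observe that $a=u\alpha=up\alpha=zp\alpha=z\alpha$, so $z$ is already a valid $\E$-polar part of $a$, and then use $z$ throughout — but as written the step ``so $b=\beta u^*$'' does not follow. (Also note it should be $b=\beta u$, not $\beta u^*$, since $b^*=u^*\beta$.)

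This mis-identification has a second casualty, which you flag but mis-diagnose. The bound $\sup_i\|u_i\|_\infty\leq 1$ does not come from the standard polar structure of $d_i$ (that only controls $u_i$ on $s(\gamma_i)$). It comes from the dual norm: the norming element $u'$ of Lemma~\ref{duality3} satisfies $\|u'\|_{\infty,\mathring d}\leq 1$, and after anti-symmetrizing, $u_i=\tfrac12(u_i'+e-\S(u_i'+e))$ for any $e\in\N$, which gives $\|u_i\|_\infty\leq\|u'\|_{\infty,\mathring d}\leq 1$. If $u$ is only the $\E$-polar part of $a$, this uniform $L_\infty$ bound on the $u_i$'s is simply not available. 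Finally, a small inaccuracy: $\S$-anti-invariant elements of $\tilde E_2$ are of the form $\sum_i(w_i,-w_i)$ with $w_i\in L_2(\M_i)$ and are automatically centered for $\tilde\E$, but the $w_i$ need not be $\E$-centered, so they do \emph{not} lie in $\pi(E_2)$. The paper therefore stays in $\tilde\M$ throughout and only descends at the very end by writing $u_i=(r_i,-r_i)$, $\gamma_i=(g_i,g_i)$, etc.; descending early as you propose would, in particular, change $\tilde\E a^*a$ into $\E a'^*a'$, which picks up cross terms $\E a_i^*\,\E a_j$ once the components are no longer centered.
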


\begin{rk}{\rm We point out that $\sum_i u_i^*u_i$ may only be in  $L_1(\M)^+$. Nevertheless $\E \sum_i u_i^*u_i$ sits in $\N$ (similarly for rows).}
\end{rk}

\begin{rk}\label{rela}{\rm The conditions on $u_i$ implies that $s(\gamma_i)$ and $|u_i|^2$ commute, more precisely $s(\gamma_i)=s(\gamma_i)|u_i|$.}
\end{rk}
\begin{proof}

Take $x\in E_\infty$ and consider $\pi(x)\in \tilde E_\infty$.  By
Proposition \ref{infat}, we have a decomposition $\pi(x)=a+b+d$ with
$a,d,b\in \tilde E_2$ such that $\|\pi(x)\|_{1,\Sigma}= \|
a\|_{1,c}+\| d\|_{1,d}+\|b\|_{1,r}$.  We may assume that $\S(a)=-a$ by
replacing it with $a'=\frac 12 (a-\S(a))$ as $\|a'\|_{1,c}\leq \frac
12 (\|a\|_{1,c}+\|\S (a)\|_{1,c})=\| a\|_{1,c}$. The same holds for
$b$ and $d$.

Consider the $\tilde \E$-polar decomposition $a=v \alpha$ and
$b^*=w^*\beta$ where $\alpha= (\tilde \E a^*a)^{1/2}\in L_1(\N)^+$ and
$\beta=(\tilde \E bb^*)^{1/2}\in L_1(\N)^+$ given by Lemma
\ref{polar}. We must also have $\S(v)=-v$ and $\S(w)=-w$.  Writing
$d=\sum_i d_i$, we also consider the usual polar decompositions $d_i=
z_i\gamma_i=\delta_i t_i$ with $\gamma_i,\delta_i\in L_2(\M_i)^+$ as
$d_i\in L_2(\M_i)$. We must also have that $\S(\gamma_i)=\gamma_i$,
$\S(z_i)=-z_i$ and $\S(\delta_i)=\delta_i$, $\S(t_i)=-t_i$.

 By Lemma \ref{duality3} (as $\tilde E_\infty\subset \tilde E_2$),
 there is $u'\in \tilde E_\infty$ with $\|u'\|_{\infty,\mathring
   \cap}\leq 1$ so that $\|\pi(x)\|_{1,\Sigma}=\tau
 (u'^{*}\pi(x))$. As before let $u=\frac 12 (u'-\S(u'))\in \tilde
 E_\infty$, we have that $\S(u)=-u$,
 $\tau(u^*\pi(x))=\|\pi(x)\|_{1,\Sigma}$. We clearly have that
 $\|u\|_{\infty,c}\leq \|u'\|_{\infty,c}$ and $\|u\|_{\infty,r}\leq
 \|u'\|_{\infty,r}$. For any $e\in \N$, $u_i= \frac 12 (u_i'+e -
 \S(u_i'+e))$, hence we get that $\|u\|_{\infty,d}\leq
 \|u'\|_{\infty,\mathring d}$.

From Proposition \ref{infat} we infer the equalities
$$\|\alpha\|_1 + \|\beta\|_1 + \sum_i \|
\gamma_i\|_1=\|x\|_{1,\Sigma}=\tau \Big(u^*v a+ \beta wu^*\Big)+
\sum_i \tau \big( u_i^*z_i \gamma_i\big).$$ Because of Lemma
\ref{kad}, $\|\tilde \E u^*v\|_\infty\leq 1$ and $\|\tilde \E
wu^*\|_\infty\leq 1$ and $\|u_i^*z_i\|_\infty\leq 1$. Thus, we must
have $\tau\big((\tilde \E u^*v) \alpha\big)=\|\alpha\|_1$,
$\tau\big((\tilde \E wu^*) \beta\big)=\|\beta\|_1$ and $\tau \big(
u_i^*z_i \gamma_i\big)=\|\gamma_i\|_1$ for all $i$.

Necessarily with $p=s(\alpha)$, $p=p(\tilde \E u^*v)p=\tilde \E
\big((up)^*(vp)\big)$. In particular $\tau\big((up)^*(vp)\big)=\tau
(p)\geq \|up\|_2.\|vp\|_2$.  Thus there is equality in the
Cauchy-Schwarz inequality so that $up=vp$ and $\tau \big( (\tilde \E
u^*u)p\big)=\tau (p)$.  But by Lemma \ref{polar} $a=vp\alpha$ and
$a=up\alpha=u\alpha$, and $p\leq \tilde\E uu^*\leq 1$. The same
argument also gives that $b=\beta u$ and $s(\beta) \leq \tilde\E
uu^*\leq 1$.

We also have that $\tau(u_i^*z_i\gamma_i)=\|\gamma_i\|_1$.  Thus
$s(\gamma_i)=s(\gamma_i) u_i^*z_i$ and as above $z_i=u_is(\gamma_i)$
and $s(\gamma_i)\leq |u_i|^2\leq 1$.  Similarly $d_i=\delta_iu_i$ and
$s(\delta_i)\leq |u_i^*|^2\leq 1$.

By the Voiculescu inequality $\|u\|_\infty\leq 3$ and
$\|\pi(x)\|_\infty\leq 3\|x\|_\infty$. Notice that we have $\tilde \E
u^*\pi(x)= \alpha +\E (\sum_i u_i^*\beta u_i +
u_i^*u_i\gamma_i)=\alpha +\E (\sum_i u_i^*\beta u_i+\gamma_i)$. We get
$\|\alpha\|_\infty \leq \|u^* \pi(x)\|_\infty\leq 9 \|x\|_\infty$. The
same argument works for $b$, thus $d=x-a-b\in E_\infty$ with
$\|d\|_\infty\leq 19\|x\|_\infty$. The fact that $\gamma_i\in \M_i^+$
follows from $\gamma_i=\tilde \E_i u_i^*d_i\in \M_i$.

To get the desired algebraic decomposition, it suffices to note that
as $\S(u_i)=-u_i\in \tilde \M_i$, it is of the form $u_i=(r_i,-r_i)$
for some $r_i\in \M_i$. The element $\sum_i r_i^*r_i$ makes sense in
$L_1(\M)^+$ as $\E(r_i^*r_i)=\tilde \E(u_i^*u_i)$ and thus
$\tau(r_i^*r_i)=\tau(u_i^*u_i)$. Similarly $\gamma_i=(g_i,g_i)$,
$\delta_i=(h_i,h_i)$ for some $g_i,h_i\in \M_i^+$.  Thus we get that
$x_i=r_i\alpha+\beta r_i+ r_ig_i=r_i\alpha+\beta r_i+ h_ir_i$ and the
conclusion on supports follows directly from that of $u$.
\end{proof}

\subsection{Norm estimates} We will use the algebraic decomposition to get
 estimates for sums of free variables.
\begin{thm}\label{mainineq}
With the notation of proposition \ref{decl1}, for any fully symmetric
function space $E$
$$\| x\|_{E(\M)} \leq
4\|\alpha\|_{E(\N)}+4\|\beta\|_{E(\N)}+4\Big\|\sum_i \gamma_i \otimes
e_i \Big\|_{E(\M\overline{\otimes} \ell_\infty)},$$
$$ \|\alpha\|_{E(\N)}, \|\beta\|_{E(\N)}, \Big\|\sum_i \gamma_i
\otimes e_i \Big\|_{E(\M\overline{\otimes} \ell_\infty)}\leq 4 \| x\|_{E(\M)}.$$
\end{thm}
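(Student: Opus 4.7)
The plan is to interpolate between the endpoints $L_\infty$ (where Voiculescu's inequality gives what we need) and $L_1$ (where direct column/row estimates apply), using the algebraic decomposition of Proposition \ref{decl1} to express both sides as bounded linear maps.

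For the \emph{upper bound}, I split $x=a+b+d$ with $a=\sum_i u_i\alpha$, $b=\sum_i\beta u_i$, and $d=\sum_i u_i\gamma_i$, and estimate each summand separately. For $a$: working in $\tilde\M$, $a$ corresponds via $\pi$ to $u\alpha$, where $u=\sum_i u_i'$ satisfies $\|u\|_\infty\leq 3$ by Voiculescu applied to the norm constraints on $u$ from Proposition \ref{decl1}. Left multiplication by $u$ is a linear map $L_p(\N)\to L_p(\tilde\M)$ bounded on $L_1$ (norm $\leq 1$ by a Cauchy--Schwarz argument exploiting $\|\tilde\E(u^*u)\|_\infty\leq 1$) and on $L_\infty$ (norm $\leq 3$); the fully symmetric property of $E$ interpolates these endpoint bounds to $\|u\alpha\|_{E(\tilde\M)}\leq 3\|\alpha\|_{E(\N)}$. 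Transport to $\M$ uses the equivalence $\|y\|_{E(\M)}\approx\|\pi(y)\|_{E(\tilde\M)}$, valid on $E_2$ because there $\pi$ is an $L_2$-isometry, bi-bounded on $L_\infty$ (Voiculescu), and approximately $L_1$-isometric through the dual identity $\|y\|_1\approx\|y\|_{1,\Sigma}=\|\pi(y)\|_{1,\Sigma}\approx\|\pi(y)\|_1$. The argument for $b$ is symmetric.

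For $d$: the partial-isometry relations $|u_i\gamma_i|^2=\gamma_i^2$ and $u_i\gamma_i^2u_i^*=\delta_i^2$, together with $\|\delta_i\|_\infty=\|\gamma_i\|_\infty$, force the column, row, and diagonal norms of $d$ each to be at most $\sup_i\|\gamma_i\|_\infty$, whence $\|d\|_\infty\leq 3\sup_i\|\gamma_i\|_\infty$ by Voiculescu; and $\|d\|_1\leq\sum_i\|u_i\gamma_i\|_1=\sum_i\|\gamma_i\|_1$ by the triangle inequality. Interpolation handles the diagonal summand.

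For the \emph{lower bound}, I exploit positivity. The identity $\tilde\E(u^*\pi(x))=\alpha+\E(\sum_iu_i^*\beta u_i)+\E(\sum_i\gamma_i)$ from the proof of Proposition \ref{decl1}, together with $\beta\geq 0$ (so $u_i^*\beta u_i\geq 0$) and $\gamma_i\geq 0$, yields $0\leq\alpha\leq\tilde\E(u^*\pi(x))$ in $\N^+$. Monotonicity of the $E$-norm on positives, contractivity of $\tilde\E$ on $E(\tilde\M)$, and $\|u\|_\infty\leq 3$ then give $\|\alpha\|_{E(\N)}\leq 3\|\pi(x)\|_{E(\tilde\M)}$, bounded by a constant multiple of $\|x\|_{E(\M)}$ by the $\pi$-equivalence. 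The row case for $\beta$ is symmetric. For $\sum_i\gamma_i\otimes e_i$, I use the partial isometries $w_i=u_is(\gamma_i)=s(\delta_i)u_i$ to set up an analogous positivity domination of $\sum_i\gamma_i\otimes e_i$ by a bounded map applied to $x$, and interpolate the same way in $E(\M\overline{\otimes}\ell_\infty)$. The main obstacle is precisely the transport between $\M$ and $\tilde\M$ through $\pi$: since $\pi$ is not bounded on general $L_1(\M)$, the $\pi$-equivalence must be established on $E_2$ via the dual Voiculescu identity, after which fully symmetric interpolation between $L_1$ and $L_\infty$ closes both inequalities.
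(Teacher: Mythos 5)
Your overall plan — use the algebraic decomposition of Proposition \ref{decl1}, estimate in $L_1+tL_\infty$, then invoke full symmetry of $E$ — is the same as the paper's, but there are two concrete problems.

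\textbf{Unnecessary detour through $\tilde\M$ and a missing centering.} Proposition \ref{decl1} already delivers $u_i\in\M_i$, $\alpha,\beta\in\N^+$, $\gamma_i\in\M_i^+$ with $x_i=u_i\alpha+\beta u_i+u_i\gamma_i$, so there is no need to go back to $\tilde\M$ and $\pi$. Re-introducing $\pi$ then forces you to prove a two-sided norm equivalence $\|y\|_{E(\M)}\approx\|\pi(y)\|_{E(\tilde\M)}$ for every fully symmetric $E$; interpolation transfers bounds for a \emph{linear operator}, and this equivalence would need a bounded projection onto $\pi(E_2)$ inside $L_1(\tilde\M)$ and $L_\infty(\tilde\M)$, which you do not establish. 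The paper sidesteps all of this by working directly in $\M$. Separately, note that $\sum_i u_i$ need not converge since the $u_i$ are not centered over $\N$; the paper writes $x=\mathring u\,\alpha+\beta\,\mathring u+\sum_i\mathring{(u_i\gamma_i)}$ using $\mathring u_i=u_i-\E u_i$, and correctly observes that $\mathring u\in E_\infty$ with $\|\mathring u\|_\infty\leq4$. Your expression $a=\sum_i u_i\alpha$ skips this.

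\textbf{A real gap in the upper bound on the diagonal piece.} You claim that the column, row and diagonal norms of $d=\sum_i u_i\gamma_i$ are each at most $\sup_i\|\gamma_i\|_\infty$. This is false for the column (and row) norm: one has $\|d\|_{\infty,c}^2=\bigl\|\E\sum_i\gamma_iu_i^*u_i\gamma_i\bigr\|_\infty=\bigl\|\E\sum_i\gamma_i^2\bigr\|_\infty$, and the sum over $i$ can be arbitrarily larger than $\sup_i\|\gamma_i\|_\infty^2$. The paper handles this by fixing $t>0$, taking an optimal decomposition $\sum_i\gamma_i\otimes e_i=r+s$ in $L_1+tL_\infty$ with $0\leq r_i,s_i\leq\gamma_i$, bounding the $r$-part by the triangle inequality in $L_1$, and bounding the $s$-part in $L_\infty$ via Voiculescu; the crucial point is that $s(s_i)\leq s(\gamma_i)\leq u_i^*u_i$ gives $s_iu_i^*u_is_i=s_i^2\leq\|s_i\|_\infty^2\,u_i^*u_i$, so the column estimate $\bigl\|\sum_i\mathring{(u_is_i)}\bigr\|_{\infty,c}\leq\sup_k\|s_k\|_\infty\cdot\|u\|_{\infty,c}$ absorbs the infinite sum through the constraint $\|u\|_{\infty,c}\leq1$. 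This is exactly the mechanism your sketch omits. It also means that "interpolation handles the diagonal summand" cannot be read as interpolating a single linear map $\sum a_i\otimes e_i\mapsto\sum\mathring{(u_ia_i)}$: that map is \emph{not} bounded $L_\infty\to L_\infty$ without the support condition $0\leq a_i\leq\gamma_i$, which is not linear, so the argument must stay at the $L_1+tL_\infty$ level.

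Your lower-bound idea (positivity of $u_i^*\beta u_i$ and $\gamma_i$, so $\E(\mathring u^*x)\geq\alpha$) matches the paper's, and the intended treatment of $\sum_i\gamma_i\otimes e_i$ via norming elements is on the right track, but as stated it is too vague to check — the paper's argument relies on the relation $\gamma_i=u_i^*x_i-|u_i|^2\alpha-u_i^*\beta u_i$, the positivity of the discarded terms, and Remark \ref{rela} to commute $p_i$ with $|u_i|$, none of which appear explicitly in your sketch.
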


\begin{proof}

 We fix $x=\sum_i x_i \in E_\infty$.

We start with the upper bound. We have that $\mathring u=\sum_i
\mathring u_i$ is a well defined element of $E_\infty$ by the
Voiculescu inequality as $0\leq \sum_{i=1}^N \E \mathring
u_i^*\mathring u_i\leq \sum_{i=1}^N\E u_i^* u_i$ (similarly for rows)
and $\|\mathring u_i\|_\infty \leq 2$ for all $1\leq i\leq N$. Thus
$\|\mathring u\|_\infty\leq 4$.

We point out that
$\E(x_i)=\E(u_i)\alpha+\beta\E(u_i)+\E(u_i\gamma_i)=0$. As
$a=\mathring u \alpha$, $b=\beta \mathring u$ are well defined
$d=x-a-b$ also is and necessarily $\E_i d= \mathring{(u_i\gamma_i)}$,
so we can write $x= \mathring u \alpha +\beta \mathring u + \sum_i
\mathring{(u_i\gamma_i)}$.

It is clear that $\|\mathring u \alpha \|_{E(\M)}\leq 4 \|\alpha\|_{E(\N)}$ and
similarly $\|\beta \mathring u\|_{E(\M)}\leq 4\|\beta\|_{E(\N)}$.

 Next we prove that $\big\|\sum_i \mathring{(u_i\gamma_i)}\big\|_{E(\M)} \leq
 4 \big\|\sum_i \gamma_i \otimes e_i \big\|_{E(\M\overline \otimes
   \ell_\infty)}$. Let $t>0$, we recall that for any variable in a non
 commutative measure space $\int_0^t
 \mu(z)=\|z\|_{L_1+tL_\infty}$. Consider an optimal
 decomposition $\sum_i \gamma_i\otimes e_i=r+s$ in $(L_1+tL_\infty)(\oplus \M_i)$. We have $\gamma_i=r_i+s_i$ and we may
 assume that $0\leq r_i,s_i\leq \gamma_i$.

We have $\sum_i \mathring{(u_i\gamma_i)}=\sum_i \mathring{(u_i
  r_i)}+\sum_i \mathring{(u_i s_i)}$ and $\|r\|_1=\sum_i \|r_i\|_1$,
$\|s\|_\infty=\sup_i \|s_i\|_\infty$.
 
By the Voiculescu inequality we can control $\|\sum_i \mathring{(u_i
  s_i)}\|_\infty$.  Indeed $\sup_i \|\mathring{(u_i s_i)}\|_\infty
\leq 2\sup_i \|s_i\|_\infty$. And
$$\Big\|\sum_i \mathring{(u_i s_i)} \Big\|_{\infty, c}^2 \leq
\Big\|\sum_i \E (s_i u_i^*u_is_i)\Big\|_{\infty}.$$ But $s_iu_i^*=
u_i^* u_i s_i u_i^* $ as $s(s_i)\leq s(\gamma_i) \leq u_i^* u_i\leq
1$.  Thus $0\leq\E s_i u_i^* u_is_i\leq\sup_k\{\|s_k\|_\infty^2\} \E
u_i^* u_i$ and it follows that $ \Big\|\sum_i
\mathring{\big(u_is_i\big)}\Big\|_{\infty, c}\leq
\sup_k\{\|s_k\|_\infty\} \| u\|_{\infty,c}\leq
\sup_k\{\|s_k\|_\infty\}.$

The last term is easier to handle
$$\Big\|\sum_i \mathring{\big(u_is_i)}\Big\|_{\infty, r} \leq
\Big\|\sum_i \E
(u_is_is_iu_i^*)\Big\|_{\infty}^{1/2}\leq\sup_k\{\|s_k\|_\infty\}\|u\|_{\infty,r}.$$
We obtain $\big\|\sum_i \mathring{(u_is_i)}\big\|_\infty\leq
4\sup_k\{\|s_k\|_\infty\}$.

By the triangle inequality $\big\|\sum_i \mathring{(u_ir_i)}\big\|_1\leq
2\sum_i \|r_i\|_1$. Thus we get $\big\|\sum_i
\mathring{(u_i\gamma_i)}\big\|_{L_1+tL_\infty}\leq 4 \big\|\sum
\gamma_i\otimes e_i\big\|_{L_1+tL_\infty}$ and the estimate we claimed
follows.

\bigskip

We turn to the lower bound. First, we notice that $\E (\mathring u^*
x)= \alpha +\sum_i \E(u_i^*\beta u_i+\gamma_i)$.  Indeed for any $j$,
we have $\E \mathring u_j^* x=\E \mathring u_j^* x_j= \E u_j^*x_j$
because $x=\sum_i x_i$ is centered. Thus for any $1\leq K\leq N$ $\E
\big((\sum_{i=1}^K \mathring u_i)x\big)=\E (\sum_{i=1}^K u_i^*u_i
\alpha+ u_i^*\beta u_i+\gamma_i)$. We conclude by letting $K\to
\infty$ and taking limits in $L_2$.

We get $\| \E(\mathring u^*x)\|_{E(\N)}\geq \|\alpha\|_{E(\N)}$. Since $\E$ is always a contraction, $\|\alpha\|_{E(\N)}\leq\|\mathring u^*x\|_{E(\M)}\leq 4
\|x\|_{E(\M)}$.  Similarly $\E(x\mathring u^*)=\beta +\E\Big(\sum_i
u_i\alpha_i u_i^* + \delta_i\Big)$ gives $\|\beta\|_{E(\N)}\leq 4 \|x\|_{E(\M)}$.

Fix $t>0$ and let $p=\oplus p_i\in \oplus_i \M_i$ be such that
$\|p\|_1=t, \|p\|_\infty=1$ and $\sum_i \tau(p_i\gamma_i)=\|\sum_i
\gamma_i \otimes e_i\|_{L_1+tL_\infty} $. We can also assume that
$0\leq p_i\leq s(\gamma_i)$ by replacing it by
$s(\gamma_i)(\frac{p_i+p_i^*}2)_+ s(\gamma_i)$. Thus by Remark
\ref{rela}, $p_i|u_i|=p_i=|u_i|p_i$. We use that $\gamma_i=u_i^*x_i
-|u_i|^2\alpha-u_i^*\beta u_i$.  We have $\sum_i \tau(p_i
|u_i|^2\alpha)= \sum_i \tau(|u_i|p_i |u_i|\alpha)\geq 0$ and also
$\sum_i \tau(u_ip_i u_i^*\beta)\geq 0$.

Finally, as $x_i$ is centered,
$\tau(p_iu_i^*x_i)=\tau(\mathring{(u_ip_i)}^*x)$. Let $\eps_i$ be the
argument of ${\tau(p_iu_i^*x_i)}$. We can check that $\|\sum_i
\eps_i\mathring{(u_ip_i)}\|_{\infty,c}\leq \|\sum_i \eps_i
u_ip_i\|_{\infty,c}\leq 1$; indeed again
$$\big\|\sum_i \eps_i u_ip_i \big\|_{\infty,c}=\big\| (\E \sum_i |u_i| p_i
|u_i|)^{1/2}\big\|_\infty\leq 1.$$ In the same way, $\|\sum_i
\mathring{(u_ip_i)}\|_{\infty,r}\leq 1$ and $\sup_i
\|\mathring{(u_ip_i)}\|_{\infty}\leq 2$. The element $X=\sum_i
\eps_i\mathring{(u_ip_i)}$ is well defined in $\M$ with norm less than
4 thanks to the Voiculescu inequality.  By the triangle inequality
$\|X\|_1\leq 2\sum_i \|p_i\|_1= 2t$.  Thus gathering the inequalities
$$\big\|\sum_i \gamma_i \otimes e_i\big\|_{L_1+tL_\infty}\leq \sum_i
|\tau(p_iu_i^*x_i)|=\tau \big(X^*.( \sum_i x_i)\big)\leq 4
\|x\|_{L_1+tL_\infty}.$$ As $E$ is fully symmetric, $\|\sum_i \gamma_i
\otimes e_i\|_{E(\M\overline\otimes \ell_\infty)} \leq 4 \| x\|_{E(\M)}$.
\end{proof}
For a fully symmetric space $E$, define for $x\in E_\infty$
$$\|x\|_{E,\Sigma}=\inf_{a,b,d\in E_\infty; x=a+d+b} \big\| \big(\E
a^*a\big)^{1/2}\big\|_{E(\N)}+\big\|\big(\E bb^*\big)^{1/2}\big \|_{E(\N)}+\big\|\sum_i
d_i\otimes e_i\big\|_{E(\M\overline\otimes\ell_\infty)},$$ as well as $$\|x\|_{E,\cap}=\max\{\big\| \big(\E
x^*x\big)^{1/2}\big\|_{E(\N)},\,\big\|\big(\E xx^*\big)^{1/2} \big\|_{E(\N)},\,\big\|\sum_i
x_i\otimes e_i\big\|_{E(\M\overline\otimes \ell_\infty)}\}.$$ To simplify we also write $\|x\|_{E,c}=\|
\big(\E x^*x\big)^{1/2}\|_{E(\N)}$ and similarly for $r$ and $d$.

  We can have another estimate on the algebraic decomposition.

\begin{prop}\label{majsum}
With the notation of proposition \ref{decl1}, for any fully symmetric
function space $E$, we have
$$\|\alpha\|_{E(\N)}\leq \|x\|_{E,c}, \quad \|\beta\|_{E(\N)}\leq \|x\|_{E,r},
\quad \Big\|\sum_i \gamma_i \otimes e_i \Big\|_{E(\M\overline \otimes
  \ell_\infty)}\leq \| x\|_{E,d}.$$\end{prop}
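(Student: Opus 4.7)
The plan is to derive each inequality from a pointwise operator-order estimate---in $\N^+$ for the column and row bounds, and as an $\|\cdot\|_{L_1+tL_\infty}$-comparison for every $t>0$ for the diagonal bound---and then to upgrade to the general fully symmetric $E$ via its interpolation property (recall that if $0\le A\le B$ in $\N^+$ then $\|A\|_{E(\N)}\le\|B\|_{E(\N)}$).

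For the column estimate, set $c:=\sum_i\E(u_i^*x_i)\in L_1(\N)$. Substituting $x_i=u_i\alpha+\beta u_i+u_i\gamma_i$ and using the identities $u_i^*u_i\gamma_i=\gamma_i$ (from $s(\gamma_i)\le u_i^*u_i$) and $P_u\alpha=\alpha$ (from $s(\alpha)\le P_u:=\sum_i\E u_i^*u_i$) yields
\[
c=\alpha+\sum_i\E(u_i^*\beta u_i)+\sum_i\E(\gamma_i),
\]
so $c=c^*\ge\alpha\ge 0$. Applying $\E$ (which is completely positive) to each of the positive matrices $\binom{u_i^*}{x_i^*}(u_i,x_i)$ and summing produces
\[
\begin{pmatrix}P_u & c\\ c^* & \E x^*x\end{pmatrix}\ge 0.
\]
Since $P_u\le 1$, a Schur complement yields $c^*c\le \E x^*x$, i.e.\ $c^2\le \E x^*x$ because $c\ge 0$; by operator monotonicity of $t\mapsto t^{1/2}$ one gets $\alpha\le c\le(\E x^*x)^{1/2}$. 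The full symmetry of $E$ then converts this into $\|\alpha\|_{E(\N)}\le\|(\E x^*x)^{1/2}\|_{E(\N)}=\|x\|_{E,c}$. The row inequality is obtained by the mirror argument starting from $d:=\sum_i\E(x_iu_i^*)=\beta+\sum_i\E(u_i\alpha u_i^*)+\sum_i\E(\delta_i)\ge\beta$.

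For the diagonal estimate, I fix $t>0$ and use the Hardy--Littlewood identity
\[
\Bigl\|\sum_i\gamma_i\otimes e_i\Bigr\|_{L_1+tL_\infty}=\sup\,\sum_i\tau(p_i\gamma_i),
\]
the supremum running over $p=(p_i)\in\bigoplus_i\M_i^+$ with $\|p\|_\infty\le 1$ and $\tau(p)\le t$. Exactly as in the proof of Theorem~\ref{mainineq} one may assume $p_i\le s(\gamma_i)$, whence Remark~\ref{rela} gives $p_iu_i^*u_i=p_i$. Writing $\gamma_i=u_i^*x_i-u_i^*u_i\alpha-u_i^*\beta u_i$ and using the cyclicity of $\tau$,
\[
\sum_i\tau(p_i\gamma_i)=\sum_i\tau((u_ip_i)^*x_i)-\sum_i\tau(\alpha^{1/2}p_i\alpha^{1/2})-\sum_i\tau(\beta^{1/2}u_ip_iu_i^*\beta^{1/2})\le\sum_i\tau(q_i^*x_i),
\]
where $q_i:=u_ip_i$ and the two dropped terms are nonnegative. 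From $|q_i|^2=p_iu_i^*u_ip_i\le p_i^2$ one deduces $\|q_i\|_\infty\le 1$ and $\tau|q_i|\le\tau(p_i)$, so $Q:=\sum_iq_i\otimes e_i$ satisfies $\|Q\|_\infty\le 1$ and $\|Q\|_1\le t$. A short rearrangement argument gives $\int_0^\infty\mu(Q)\mu(X)\le\int_0^t\mu(X)$ with $X:=\sum_ix_i\otimes e_i$, hence the Hardy--Littlewood trace inequality $\tau(Q^*X)\le\|X\|_{L_1+tL_\infty}$. Taking the supremum over $p$ for each $t$ and applying the full symmetry of $E$ delivers $\|\sum_i\gamma_i\otimes e_i\|_{E(\M\overline{\otimes}\ell_\infty)}\le\|x\|_{E,d}$.

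The one delicate point is the matrix positivity displayed above when $N=\infty$, since the $(2,2)$-entry lies only in $L_1(\N)$: this is handled by running the Schur argument on the truncations $x^{(N)}=\sum_{i\le N}x_i$ (where all entries are bounded and the positivity is elementary), and then passing to the limit using $P_u^{(N)}\le P_u\le 1$ and $L_1(\N)$-convergence of the other entries.
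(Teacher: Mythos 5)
Your argument is correct and follows essentially the same route as the paper's: for the column/row bounds you exploit the complete positivity of $\E$ to compare $\alpha$ and $\beta$ with $(\E x^*x)^{1/2}$, $(\E xx^*)^{1/2}$; for the diagonal bound you use the $L_1+tL_\infty$ duality and pair $u_ip_i$ against $x_i\otimes e_i$, exactly as in the paper's modification of the lower-bound computation in Theorem~\ref{mainineq}. The only cosmetic difference is in the column step: the paper invokes Lemma~\ref{kad} to factor $\E(\mathring u^*x)=(\E\mathring u^*\mathring u)^{1/2}C(\E x^*x)^{1/2}$ and then uses contractivity of left multiplication, whereas you run a Schur-complement argument on $\left(\begin{smallmatrix}P_u&c\\c^*&\E x^*x\end{smallmatrix}\right)\geq 0$ and operator monotonicity of the square root to get the operator inequality $\alpha\leq c\leq(\E x^*x)^{1/2}$ directly---both hinge on the same $2\times2$ positivity coming from $\E$ being completely positive. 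Your truncation remark for $N=\infty$ matches the paper's $K\to\infty$ limit in Theorem~\ref{mainineq}.
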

\begin{proof}
We also keep the notation of the proof of Theorem \ref{mainineq}. This
is just a variation.

We have already explained that $\E (\mathring {u}^*x)\geq \alpha\geq
0$. By Lemma \ref{kad}, there is a contraction $C\in \N$ so that $\E
(\mathring {u}^*x)= \big(\E(\mathring {u}^*\mathring {u})\big)^{1/2}C
\big(\E(x^*x)\big)^{1/2}$ and we can get $\|\alpha\|_{E(\N)}\leq \|x\|_{E,c}$. The
same works for $\beta$.

For the last bound, we modify the arguments from Theorem
\ref{mainineq}. With the same notation, one just need to notice that
$\sum_i |\tau(p_iu_i^*x_i)|=\tau \big((\sum_i \eps_ip_iu_i \otimes
e_i).( \sum_i x_i\otimes e_i)\big)$.  The element $g= \sum_i
\eps_ip_iu_i\otimes e_i$ also satisfies $\|g\|_\infty\leq 1$,
$\|g\|_1\leq t$ and we get $\|\sum_i \gamma_i \otimes
e_i\|_{L_1+tL_\infty}\leq \|\sum_i x_i \otimes
e_i\|_{L_1+tL_\infty}$. This yields the result for all fully symmetric
spaces.
 
\end{proof}

We can deduce the Rosenthal type inequalities in the spirit of
\cite{JX, JPX, DPP}.

\begin{cor}\label{maincor} For any fully symmetric space $E$ and $x\in E_\infty$, we have
$$\frac 1 {16} \|x\|_{E,\Sigma} \leq \|x\|_{E(\M)} \leq 12 \|x\|_{E,\cap}.$$
  If moreover $E$ is an $(L_1,L_2)$-interpolation space $\|x\|_{E(\M)} \leq
  2\|x\|_{E,\Sigma}.$\\ If moreover $E$ is an
  $(L_2,L_\infty)$-interpolation space $\|x\|_{E,\cap}\leq 2\|x\|_{E(\M)}.$
\end{cor}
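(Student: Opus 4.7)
The plan is to deduce all four inequalities from Theorem~\ref{mainineq} and Proposition~\ref{majsum}, with the two sharper estimates requiring in addition a standard interpolation argument applied to specific linear maps.

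For the generic upper bound $\|x\|_{E(\M)}\leq 12\|x\|_{E,\cap}$, I would take the algebraic decomposition from Proposition~\ref{decl1} and combine the upper bound of Theorem~\ref{mainineq}, which dominates $\|x\|_{E(\M)}$ by $4(\|\alpha\|_{E(\N)}+\|\beta\|_{E(\N)}+\|\sum_i\gamma_i\otimes e_i\|_E)$, with Proposition~\ref{majsum}, which dominates these three terms respectively by $\|x\|_{E,c}$, $\|x\|_{E,r}$, $\|x\|_{E,d}$. For the generic lower bound $\|x\|_{E,\Sigma}\leq 16\|x\|_{E(\M)}$, I would feed the explicit decomposition $x=\mathring u\alpha+\beta\mathring u+\sum_i\mathring{(u_i\gamma_i)}$ already exhibited in the proof of Theorem~\ref{mainineq} into the definition of $\|\cdot\|_{E,\Sigma}$: the first two pieces give $\|\mathring u\alpha\|_{E,c}\leq\|\alpha\|_{E(\N)}$ and $\|\beta\mathring u\|_{E,r}\leq\|\beta\|_{E(\N)}$ thanks to $\E(\mathring u^*\mathring u)\leq\E(u^*u)\leq 1$ and operator monotonicity of the square root, while for the diagonal piece the map $(y_i)\mapsto(\mathring y_i)$ has norm $\leq 2$ on both $L_1$ and $L_\infty$ (hence on $E$ by interpolation) and left multiplication by $\sum_i u_i\otimes e_i$ is a contraction since $\sup_i\|u_i\|_\infty\leq 1$, so that $\|\sum_i\mathring{(u_i\gamma_i)}\otimes e_i\|_E\leq 2\|\sum_i\gamma_i\otimes e_i\|_E$. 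The lower bound of Theorem~\ref{mainineq} then closes the estimate with constants $4+4+2\cdot 4=16$.

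For the $(L_1,L_2)$-interpolation case, I would fix any decomposition $x=a+b+d$ with $a,b,d\in E_\infty$ and bound each piece separately. Writing the $\E$-polar decomposition $a=u\alpha$ from Lemma~\ref{polar} with $\|u\|_{\infty,c}\leq 1$, the linear map $T_u\colon\eta\mapsto u\eta$ is contractive from $L_p(\N)$ into $L_p(\M)$ for $p=2$ (immediate from $\|\E(u^*u)\|_\infty\leq 1$) and for $p=1$ (the polar decomposition $u\eta=w|u\eta|$ together with the modular property for $\eta\in\N$ and Lemma~\ref{kad} gives $\|u\eta\|_1=\tau(\eta\E(w^*u))\leq\|\eta\|_1$). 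Interpolating yields $\|a\|_{E(\M)}\leq\|\alpha\|_{E(\N)}=\|a\|_{E,c}$, and analogous arguments handle $b$ and $d$ (the diagonal piece just using the triangle and orthogonality inequalities on $L_1$ and $L_2$ respectively). Summation over the decomposition and infimization give $\|x\|_{E(\M)}\leq\|x\|_{E,\Sigma}$, even improving on the stated constant.

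For the $(L_2,L_\infty)$-interpolation case, I would argue by duality. The Köthe dual $E'$ is then $(L_1,L_2)$-interpolation, so the previous case applied to $E'$ yields $\|z\|_{E'(\M)}\leq 2\|z\|_{E',c}$ for every $z\in E'_\infty$ via the trivial decomposition $z=z+0+0$. Coupled with the symmetric-space analogue of Lemma~\ref{duality2}, namely $\|x\|_{E,c}=\sup\{|\tau(z^*x)|:z\in E_2,\ \|z\|_{E',c}\leq 1\}$, this gives $\|x\|_{E,c}\leq 2\|x\|_{E(\M)}$; the row case is identical by conjugation, and $\|x\|_{E,d}\leq\|x\|_{E(\M)}$ is obtained directly by interpolating the linear map $x\mapsto\sum_i x_i\otimes e_i$, which is contractive on $L_2$ (orthogonality) and on $L_\infty$ (contractivity of $\E_i$). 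The main obstacle is precisely this last duality step: the symmetric-space extension of Lemma~\ref{duality2} is not explicitly stated in the paper, and while the $L_p$-proof transfers formally it requires a short separate verification of the duality pairing between $(E_2,\|\cdot\|_{E,c})$ and $(E_2,\|\cdot\|_{E',c})$.
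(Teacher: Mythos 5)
Your treatment of the generic upper and lower bounds, and of the $(L_1,L_2)$ case, follows the paper's proof essentially verbatim: feed the decomposition $x=\mathring u\alpha+\beta\mathring u+\sum_i\mathring{(u_i\gamma_i)}$ into Theorem \ref{mainineq} and Proposition \ref{majsum} for the constants $16$ and $12$, and for the $(L_1,L_2)$ case interpolate $\eta\mapsto u\eta$ and the diagonal map. One quantitative slip there: you claim to improve the constant to $1$, but the diagonal term forces a $2$. The map $(d_i)\mapsto\sum_i\mathring{d_i}$ must be defined on \emph{all} of $\oplus L_p(\M_i)$ (not just on the subspace of already-centered tuples) in order to interpolate, and on $L_1$ centering costs a factor $2$; restricting to the centered subspace and claiming contractivity hides a subspace-interpolation problem you do not address.

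For the $(L_2,L_\infty)$ case you depart from the paper. The paper's proof is short and direct: $\E$ is a contraction on $L_1$ and $L_\infty$, hence $\|\E(x^*x)\|_{L_1+tL_\infty}\leq\|x^*x\|_{L_1+tL_\infty}$ for every $t$, and a quoted result of \cite{LS} converts Hardy--Littlewood--P\'olya submajorization of $\E(x^*x)$ by $x^*x$ into $\|(\E x^*x)^{1/2}\|_{E}\leq\|x\|_{E(\M)}$ for any $(L_2,L_\infty)$-interpolation space $E$; the diagonal term is handled by the map $z\mapsto\sum_i(\E_i z-\E z)\otimes e_i$, of norm $\leq2$ on $L_2$ and $L_\infty$. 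Your alternative is to dualize the $(L_1,L_2)$ case through the K\"othe dual $E'$. This is a legitimate idea, but it has two gaps that the paper's argument avoids. First, as you note, the symmetric-space analogue of Lemma \ref{duality2} --- the identity $\|x\|_{E,c}=\sup\{|\tau(z^*x)|:\|z\|_{E',c}\leq1\}$ --- is not proved in the paper; beyond a formal transfer of the $L_p$-proof you would also need $E=E''$ (a Fatou-type condition), which does not hold for every fully symmetric $E$, so your route actually proves the statement for a smaller class. Second, your claim that $x\mapsto\sum_i x_i\otimes e_i$ is ``contractive on $L_\infty$ by contractivity of $\E_i$'' has the same subspace-interpolation defect as above: to interpolate you must work with the ambient map $z\mapsto\sum_i(\E_i z-\E z)\otimes e_i$ on $\M$, and this has norm $2$, not $1$. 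Since the paper's direct argument for the column and row parts gives constant $1$, either route ends up at the stated constant $2$ because of the diagonal term, but your intermediate contractivity claims should be corrected.
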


\begin{proof}
One can take $a=\sum_i \mathring u_i \alpha$, $b=\sum_i \beta\mathring
u_i $ and $d=\sum_i \mathring{u_i\gamma_i}$. We have already shown
that $a,b,d\in E_\infty$ and we can use Theorem \ref{mainineq} to get
the lower bound as $\|a\|_{E,c}\leq \|\alpha\|_{E(\N)}$, $\|b\|_{E,r}\leq
\|\beta\|_{E(\N)}$ and $\|d\|_{E,d}\leq 2 \| \sum_i \gamma_i\otimes
e_i\|_{E(\M\overline\otimes \ell_\infty)}$.  The upper bound is direct from Propositions \ref{mainineq} and
\ref{majsum} using the triangle inequality.

We justify the remaining inequality when $E$ is an
$(L_1,L_2)$-interpolation space.

Let $a\in E_\infty$ with $\E$-polar decomposition $a=u\alpha$ and
$p=s(\alpha)\in \N^+$. The map $r\mapsto ur$ defined on $p\N p$
extends to a contraction $L_2(p\N p)\to L_2(\M)$ and also to $L_1(p\N
p)\to L_1(\M)$ as $\E(ru^*ur)^{1/2}\leq r$ for $r\in p\N^+ p$. Thus by
interpolation $\|u\alpha\|_{E(\M)}\leq \|\alpha\|_{E(\N)}=\| \big(\E
a^*a\big)^{1/2}\|_{{E(\N)}}$. The same works for $b$. For the other term, we
proceed similarly by considering the map defined on $(L_1+L_2)(\oplus
\M_i)$ by $T(\sum_i d_i\otimes e_i)=\sum_i \mathring{d_i}$. $T$ has
norm smaller than 2 on all interpolation spaces between $L_1$ and
$L_2$.

 When $E$ is an $(L_\infty,L_2)$-interpolation space, the inequality
 follows in the same way. Indeed by interpolation, for $x\in E_\infty$
 $\|\E (x^*x)\|_{L_1+tL_\infty}\leq \|x^*x\|_{L_1+tL_\infty}$ which
 implies $\|x\|_{E,c}\leq \|x\|_{E(\M)}$ by \cite{LS}; similarly for
 rows. The map $z\in \M\mapsto \sum_i (\E_i(z)-\E(z))\otimes e_i$ has
 norm less than 2 on $L_\infty$ and $L_2$.
\end{proof}
\begin{rk}{\rm Actually the inequality $\|x\|_{E(\M)}\geq c\|x\|_{E,\Sigma}$ implies
$\|x\|_{E(\M)}\leq 12c\|x\|_{E,\cap}$ for all $E$ and $x\in E_\infty$ by
    duality. Indeed, take $y\in L_\infty\cap tL_1(\M)$ norming $x$ in
    $L_1+tL_\infty$ and apply the inequality to $z=P_1(y)$, its
    component of length one, for $E=L_\infty\cap tL_1$. We get a
    decomposition $z=a+b+d$ which gives by duality that
    $\|x\|_{L_1+tL_\infty}\leq4c \|x\|_{L_1+tL_\infty,\cap}$.  This
    allows to conclude to $\|x\|_{{E(\M)}}\leq 12c \|x\|_{E,\cap}$ for all
    $E$. }

\end{rk}

\begin{rk}\label{compa}{\rm
Free or independent variables are examples of martingales. The above
corollary can be deduced from the literature when $E$ is an
interpolation space between $L_p$ and $L_p$ but only when $2<p<q<\infty$ in
\cite{D, DPP} or $1<p<q<2$ in \cite{RW} under the Fatou assumption for
$E$ (the results are given in terms of Boyd indices which is sightly
stronger).  A related estimate is obtained when $2=p<q=4$ in
\cite{JSZ} but for a somehow different norms on the right see Remark
\ref{jsmart}. The novelty is that we allow the end points of the
scales, the constants are universal and the decomposition is
independent of $E$.}
\end{rk}

\section{Applications to other inequalities}

From now on, to lighten the notation, when no confusion can occur, we may
simply write $\|x\|_E$ for $\|x\|_{E(\M)}$ if $x\in
L_0(\M,\tau)$. We do not lose information this way since
$\mu(x)$ does not depend on the
semi-finite von Neumann subalgebra in which it is computed as long as
the traces are compatible.

\subsection{Links with the Johnson-Schechtman inequalities}

The Johnson-Schechtman inequality \cite{JS} is a very efficient tool
to compute explicitly the norm of sums of independent (commutative)
variables.  Its free analogue was considered in \cite{SZ}.  It
concerns only variables with trivial amalgamation. Here we explain how
to recover it from our arguments as an algebraic decomposition can be
given explicitly.  As above $\M$ is the free product of non
commutative probability spaces $(\M_i,\tau)$, $1\leq i\leq N$, with
$\N=\bC$. This will yield much nicer constants than
the original proof. We will concentrate only on the case of symmetric
variables as in \cite{SZ} i.e. selfadjoint variables $x$ such that $x$
and $-x$ are equimeasurable.

\medskip
 
We give ourselves a fully symmetric function space $E$ on $(0,\infty)$
with $\|1_{[0,1]}\|_E=1$. With this normalization, we always have
$\|f\|_1\leq \|f\|_E\leq \|f\|_\infty$ for any $f\in L_\infty$
supported on a measure 1 set.  Define as in \cite{SZ}
$$\|f\|_{Z_E^2}= \| \mu(f) 1_{[0,1]}\|_E + \|\min\{ \mu(f)(1),
\mu(f)\}\|_2.$$

\begin{prop}
Let $x_i$ be symmetrically distributed variables in $\M_i$ then
$$\frac 1 3 \big\| \sum_i x_i\otimes e_i\big\|_{Z_E^2}\leq \big\| \sum_i
x_i\big\|_E\leq 3\big\| \sum_i x_i\otimes e_i\big\|_{Z_E^2}.$$
\end{prop}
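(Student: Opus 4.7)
Plan:

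The plan is to apply Theorem \ref{mainineq} with $\N = \bC$ and use the symmetry of the $x_i$'s to take $\alpha = \beta$ in the algebraic decomposition of Proposition \ref{decl1} (by averaging the decomposition with its counterpart for $-x_i$), reducing the problem to proving $\alpha + \|\sum_i \gamma_i \otimes e_i\|_E \asymp \|F\|_{Z_E^2}$, where $F = \mu(\sum_i x_i \otimes e_i)$. The explicit tool is the threshold decomposition $x_i = y_i + z_i$ at level $\lambda = F(1)$, with $y_i = x_i \cdot 1_{|x_i| \leq \lambda}$ and $z_i = x_i - y_i$ defined via spectral calculus on $|x_i|$. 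Both $y_i, z_i$ are symmetric (hence centered) and free in $i$; two rearrangement identities, $\sum_i \tau(y_i^2) \leq \|\min(F(1), F)\|_2^2$ (since $\mu(y_i) \leq \min(\mu(x_i), \lambda)$) and $\sum_i \tau(s(z_i)) \leq 1$ (since $\mu(\sum_i z_i \otimes e_i) = F\cdot 1_{[0,1)}$), will be used throughout.

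For the upper bound I treat $\sum_i y_i$ and $\sum_i z_i$ separately. For $\sum_i y_i$, Voiculescu's inequality at $p=\infty$ bounds the three mixed norms each by $\|\min(F(1), F)\|_2$ (column and row via the rearrangement identity, diagonal via $\|y_i\|_\infty \leq \lambda \leq \|\min(F(1), F)\|_2$), so $\|\sum_i y_i\|_\infty \leq 3\|\min(F(1), F)\|_2$; since $\tau(1_\M) = 1$ and $\|1_{[0,1]}\|_E = 1$, this transfers to $\|\sum_i y_i\|_{E(\M)}$. For $\sum_i z_i$, consider the linear map $T : (a_i)_i \mapsto \sum_i a_i$ from $\oplus_i \M_i$ to $\M$, restricted to centered tuples satisfying $\sum_i \tau(s(a_i)) \leq 1$. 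The triangle inequality gives $\|T\|_{L_1 \to L_1} \leq 1$, while the small-support Voiculescu estimate $(\sum_i \tau(a_i^2))^{1/2} \leq \max_i \|a_i\|_\infty$ combined with Voiculescu's $L_\infty$ inequality yields $\|T\|_{L_\infty \to L_\infty} \leq 3$. Because every fully symmetric $E$ is an $(L_1, L_\infty)$-interpolation space, interpolation then gives $\|T\|_{E \to E} \leq 3$, and applying this to $(z_i)_i$ yields $\|\sum_i z_i\|_{E(\M)} \leq 3 \|\sum_i z_i \otimes e_i\|_{E(\M \bar\otimes \ell_\infty)} = 3 \|F \cdot 1_{[0,1]}\|_E$. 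Summing both contributions gives the upper bound.

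For the lower bound I handle each component of $\|F\|_{Z_E^2}$ separately. For the $L_2$-part, I use
\[
\|\min(F(1), F)\|_2 \leq \|F\|_2 = \|\sum_i x_i\|_{L_2(\M)}
\]
(by free orthogonality), then combine Cauchy--Schwarz ($\|\sum x_i\|_{L_2}^2 \leq \|\sum x_i\|_{L_1} \|\sum x_i\|_\infty$), the Voiculescu upper bound $\|\sum x_i\|_\infty \leq 3(\|F\|_2 + F(0^+))$, and the embedding $L_1 \hookrightarrow E$ in the probability space, together with a short case analysis (if $F(0^+) > \|F\|_2$ then the support of $F$ has measure $<1$, so $F(1) = 0$ and the left-hand side vanishes). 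For the $E$-part, I use the operator inequality $|x_i| \leq 2\alpha + \gamma_i$ derived from $x_i = u_i(2\alpha + \gamma_i)$ and $u_i^* u_i \leq 1$: indeed $|x_i|^2 \leq (2\alpha + \gamma_i)^2$, so by operator monotonicity of $\sqrt{\cdot}$, $|x_i| \leq 2\alpha + \gamma_i$; passing to singular value functions gives $F \leq 2\alpha + \mu(\sum_i \gamma_i \otimes e_i)$, and hence $\|F \cdot 1_{[0,1]}\|_E \leq 2\alpha + \|\sum_i \gamma_i \otimes e_i\|_E \leq C \|\sum_i x_i\|_E$ by Theorem \ref{mainineq}.

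The main obstacle is the interpolation argument for $T$ in the upper bound: since $T$ is only defined (and boundedly so) on the subspace of centered, small-support tuples rather than on all of $\oplus_i \M_i$, care is needed to verify that the interpolation theorem for $(L_1, L_\infty)$-interpolation spaces still applies, i.e. that the restriction of $T$ has compatible $L_1$ and $L_\infty$ endpoint estimates and that the resulting operator-norm bound propagates through the full symmetric-space setting.
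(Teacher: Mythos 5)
Your plan (split at the threshold $\lambda = F(1)$) is the same idea that underlies the paper's proof, and your treatment of the ``small'' part $\sum_i y_i$ via Voiculescu in $L_\infty$ is sound. But there are three concrete gaps in the rest.

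First, the interpolation step for the ``large'' part $\sum_i z_i$ does not go through as stated. You want to interpolate the map $T:(a_i)_i\mapsto\sum_i a_i$ between $L_1$ and $L_\infty$, but the $L_\infty$ endpoint bound $\|T\|_{L_\infty\to L_\infty}\leq 3$ only holds on the subset of centered tuples with $\sum_i\tau(s(a_i))\leq 1$. That support constraint is not linear, so the standard $(L_1,L_\infty)$-interpolation theorem for fully symmetric spaces does not apply to $T$ restricted to that set. You flagged this as ``the main obstacle'' but did not resolve it. The paper's fix is exactly the one you need: for each $t>0$, take an optimal decomposition $\sum_i\gamma_i\otimes e_i = r+s$ in $L_1+tL_\infty$ with $0\leq r_i,s_i\leq\gamma_i$; because $s(s_i)\leq s(\gamma_i)$, the smallness of supports is inherited, so one can apply Voiculescu to $\sum_i v_i s_i$ and the triangle inequality to $\sum_i v_i r_i$, and then invoke full symmetry of $E$ directly via the $L_1+tL_\infty$ bound. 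This is not a linear-operator interpolation, and it sidesteps the nonlinearity.

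Second, the case analysis in your lower bound for the $L_2$-part is false. If $F(0^+)>\|F\|_2$, it does not follow that $F$ is supported on a set of measure $<1$: take one variable concentrated on a tiny set with huge $L_\infty$-norm together with many small, spread-out variables; then $F(0^+)>\|F\|_2$ but $\supp F$ can have measure $\geq 1$, so $F(1)>0$ and your left-hand side does not vanish. The paper's argument is much more direct: using the explicit $u_i$ one has $\tau\big((\sum_i u_i)^*\sum_i x_i\big)\geq t\|\sum_i u_i\|_2^2=\alpha^2/t$ together with $\|\sum_i u_i\|_\infty\leq 3\alpha/t$, giving $\alpha\leq 3\|\sum_i x_i\|_1\leq 3\|\sum_i x_i\|_E$ with no cases.

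Third, for the $E$-part of the lower bound you fall back on Proposition~\ref{decl1} and Theorem~\ref{mainineq}, which produce a constant of order $12$ rather than $3$. That is precisely what the section is designed to avoid: the point of the explicit decomposition (with the projections $q_i$ and partial isometries $v_i$) is to get sharp, $E$-independent constants without invoking the abstract machinery. The paper instead takes a norming element $\sum_i p_i\otimes e_i$ for $\sum_i|x_i|q_i$ in $L_1+tL_\infty$ and repeats the pairing argument of Theorem~\ref{mainineq}, giving the factor $3$ directly.
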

\begin{proof}
By enlarging the algebras and using compositions with complete
isometries, we can assume that $\M_i=L_\infty[-\frac 12, \frac12]$ and
the $x_i$ are odd functions.

Assume $f=\sum_i x_i\otimes e_i \in L_1+L_\infty(\M\overline \otimes
\ell_\infty)$ with $\|f\|_{Z_E^2}<\infty$. Set $t=\mu(f)(1)$ and
$\alpha=\|\min\{ t, \mu(f)\}\|_2$, note that $\alpha\geq t$.

We can find even projections $q_i$ in $\M_i$ so that $1_{|x_i|>t}\leq
q_i\leq 1_{|x_i|\geq t}$ and $\sum_i q_i|x_i|\otimes e_i$ has
distribution $\mu(f)1_{[0,1]}$.

Considering the polar decomposition $x_i=r_i|x_i|$, we define $v_i=r_i
q_i$, $w_i=\frac 1 t {x_i} (1-q_i)$. They are disjointly supported. Let
$u_i=v_i+w_i$. Because $x_i$ is an odd function, $v_i$, $w_i$ and $u_i$ also
are.  Clearly $\|u_i\|_\infty\leq 1$ and $|\sum u_i\otimes e_i|$ has
the same distribution as $\min\{1,|f|/t\}$. Hence we have that
$\|\sum_i u_i\|_2= \alpha/t$. By the Voiculescu inequality, $\| \sum_i
u_i\|_\infty\leq 2\| \sum_i u_i\|_2 +1 \leq 3 \alpha/t$.
Similarly by construction the support of $\sum_i v_i\otimes e_i$ has
measure 1, from which we deduce that $\| \sum_i v_i\|_\infty\leq 3 $.

We have a decomposition $x_i=t u_i + v_i (|x_i|-t)_+$. We set
$\gamma_i=(|x_i|-t)_+q_i$.

We have a first trivial estimate $\| \sum_i t u_i\|_E\leq\| \sum_i t
u_i\|_\infty\leq 3\alpha$.

By construction $\|\sum \gamma_i\otimes e_i\|_E\leq \| \mu(f)
1_{[0,1]}\|_E$.  Using the same argument as in Theorem \ref{mainineq},
we get $\|\sum_i v_i\gamma_i\|_E \leq 3 \|\sum \gamma_i\otimes
e_i\|_E$ (there is no centering).

Thus we arrive at $\|\sum_i x_i \|_E\leq 3 \|\sum_i x_i\otimes e_i
\|_{Z_E^2}$.

In the opposite direction, assume $\sum x_i\in E$. Since,
$$3\alpha/t\|\sum_i x_i\|_1\geq \tau\big( (\sum_i u_i)^*(\sum_i
x_i)\big)\geq t\|\sum_i u_i\|^2=\alpha^2/t,$$ we obtain 
$\|\sum_i x_i\|_E\geq \|\sum_i x_i\|_1\geq \alpha/3$.

Taking $\sum_i p_i \otimes e_i$ with $p_i$'s even norming $\sum_i
|x_i|q_i$ in $L_1+tL_\infty$ and arguing as in Theorem \ref{mainineq},
one gets $\| \sum_i |x_i|q_i\otimes e_i\|_E \leq 3\|\sum_i
x_i\|_E$. So that we get $\| \mu(f) 1_{[0,1]}\|_E \leq 3 \|\sum_i
x_i\|_E$.
\end{proof}

\subsection{Application to martingales}

Sums of free variables are basic examples of martingales. In this
short section we explain how the norm estimates we got can be used to
interpolate the Burkholder inequality for non commutative martingales
quite easily. The basic idea is to realize the norm in the Burkholder-Gundy
inequality as the norm of a sum  of free variables with amalgamation.

Let $(\N_k)_{k=0,...d}$ be a finite filtration of the finite von
Neumann algebra $(\N,\tau)$ ($\N_d=\N$) with conditional expectations
$\bE_k$.

As usual for an element $x\in \N$, we consider its martingale
difference $(\dd x_k)_{k=0}^{d}$, where $\dd x_0=\bE_0 x$ and $\dd x_k=\bE_k
x-\bE_{k-1} x$ for $k\geq 1$.

Denote $\M_k=\N*_{\N_k}\N_{k+1}$ for $k=0,...,d-1$. The copy of $\N$ in
$\M_k$ will be still denoted by $\N$, and we denote by $\rho_k:\N_{k+1}\to
\M_k$ the natural inclusion. The conditional
expectation $\M_k\to \N_k$ onto the amalgam is denoted by $\E_k$
whereas $\E$ will denote the conditional expectation onto the 
copy of $\N$.  Thus $\E_k=\E \E_k=\E_k \E= \bE_k \E$.

We have that for any $x\in \N_{k+1}$, $\E \rho_k(x)=\E_k
\rho_{k}(x)=\E_{k} x=\bE_{k}x$.

Let $(\M,\E)=\ast_{\N}(\M_k,\E)$ and denote by $\F_{k}, \F_k^+$ the
conditional expectation from $\M$ to $\rho_k(\N_k)=\N_k\subset \M_k$ and to
$\rho_k(\N_{k+1})\subset \M_k$ for $k\geq 0$.   Hence, for $x_k\in \M_k$, we have
$\F_{k}(x_k)=\E_k(x_k)$ and $\E \F_k^+(x_k)=\E_k(x_k)$.

Define $\gamma : \N\to \M$ by $\gamma(x)=\bE_0 x+\sum_{k=1}^d
\rho_{k-1}(\dd x_k)$. And let $\phi:\M\to \gamma(\N)$ given by
$\phi(x)=\F_{0} x + \sum_{k=0}^{d-1} (\F_{k}^+-\F_{k})(x)$. Note that
$(\F_{k}^+-\F_{k})(x)$ is centered in $\M_k$ with respect to $\E$ and $\phi(x)$ is a sum of
centered free variables up to $\bE_0 x$.

\begin{prop}\label{martcp}
  The map $\phi:\M\to \M$ is a projection onto $\gamma(\N)$ and it
  extends to a bounded map $L_p(\M)\to L_p(\M)$ for $1<p<\infty$ (with
  a constant independent of $d$).
\end{prop}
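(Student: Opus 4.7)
Because $\rho_k(\N_{k+1})\subset \M_k$, we have $\F_k^+=\F_k^+\circ \E_{\M_k}$, where $\E_{\M_k}\colon \M \to \M_k$ is the conditional expectation from the decomposition $\M=\M_k *_\N (*_{j\neq k}\M_j)$. Freeness over $\N$ forces $\E_{\M_k}$, and hence $\F_k^+$, to annihilate $\mathring \M_j$ for $j\neq k$ and every word of length $\ge 2$. Together with $\F_k=\bE_k\E$ and $\F_k^+|_\N=\bE_k$ this gives the key identity
\[
\phi(x)=\bE_0\E(x)+\sum_{k=0}^{d-1}\F_k^+(\mathring x_k),\qquad \mathring x_k:=\E_k(x)-\E(x)\in\mathring\M_k.
\]
To see $\phi|_{\gamma(\N)}=\id$, I would compute for $n\in\N$ that $\E(\gamma n)=\bE_0 n$ and that the length-$1$ slot-$k$ component of $\gamma(n)$ is $\rho_k(\dd n_{k+1})$, so $\phi(\gamma n)=\bE_0 n+\sum_k\rho_k(\dd n_{k+1})=\gamma(n)$. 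Conversely, for $x\in\M$ the elements $w_k:=\rho_k^{-1}(\F_k^+(\mathring x_k))\in \N_{k+1}$ satisfy $\bE_k w_k=\F_k(\mathring x_k)=0$, and $n:=\bE_0\E(x)+\sum_k w_k\in \N$ realises $\phi(x)=\gamma(n)$.

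\textbf{Boundedness for $p\ge 2$.} The family $(\F_k^+(\mathring x_k))_k$ consists of free-over-$\N$ variables, each centred since $\E\F_k^+(\mathring x_k)=\F_k(\mathring x_k)=0$, so Corollary \ref{maincor} with $E=L_p$ yields
\[
\Big\|\sum_k\F_k^+(\mathring x_k)\Big\|_p \leq 12\big(\|(\F_k^+(\mathring x_k))\|_{p,c}+\|(\F_k^+(\mathring x_k))\|_{p,r}+\|(\F_k^+(\mathring x_k))\|_{p,d}\big).
\]
For the column term, Kadison--Schwarz gives $\E|\F_k^+(\mathring x_k)|^2\leq \F_k(|\mathring x_k|^2)=\bE_k\E|\mathring x_k|^2$; the dual form of the noncommutative Doob maximal inequality (obtained by duality from $\|\sup_k\bE_k b\|_{p'}\leq C_{p'}\|b\|_{p'}$) then bounds $\|\sum_k\bE_k\E|\mathring x_k|^2\|_{p/2}$ by a constant times $\|\sum_k\E|\mathring x_k|^2\|_{p/2}=\|(\mathring x_k)\|_{p,c}^2$, which in turn is at most a constant multiple of $\|x\|_p^2$ by the upper bound in Corollary \ref{maincor} (applicable because $L_p$ is an $(L_2,L_\infty)$-interpolation space) applied to $\sum_k\mathring x_k$, combined with the norm-$4$ bound on the length-$\leq 1$ projection. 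The row estimate is symmetric, and the diagonal bound follows from contractivity of $\F_k^+$ together with $\|(\mathring x_k)\|_{p,d}\leq 2\|\sum_k\mathring x_k\|_p$.

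\textbf{Extension to $1<p<2$ and main obstacle.} A direct check using self-adjointness of $\F_k^+$, $\F_k$ and $\F_0$ with respect to $\tau$ shows $\phi^*=\phi$, so boundedness on $L_{p'}$ transfers to $L_p$ by duality. The principal analytic obstacle is precisely the dual Doob inequality invoked above: one must commute the conditional expectations $\bE_k$ past the sum over $k$ without accumulating a factor depending on the filtration length $d$, and it is this step (rather than the purely algebraic simplification of $\phi$ or the subsequent Voiculescu-type bound of Corollary \ref{maincor}) that is responsible for the $p$-dependence and requires the full strength of Junge's noncommutative maximal inequalities.
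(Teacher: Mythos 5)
Your proof is correct and follows essentially the same route as the paper: reduce to words of length one via $x\mapsto(\mathring x_k)_k$ (the paper packages this as $T\circ P_1$), bound the column/row norms of $(\F_k^+(\mathring x_k))_k$ via Kadison--Schwarz plus the dual Doob inequality and then invoke the free Rosenthal estimates, and handle $1<p<2$ by self-adjointness of $\phi$ and duality. The only blemish is a notational slip: in your definition $\mathring x_k:=\E_k(x)-\E(x)$ the map $\E_k$ should be your $\E_{\M_k}$ (the conditional expectation onto $\M_k$), not the paper's $\E_k:\M_k\to\N_k$.
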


We recall the dual Doob inequality from \cite{J2}.

\begin{thm}
Let $1\leq p<\infty$, there is a constant $c_p$ (only depending on
$p$) such that for any $d\geq 1$ and $a_k\in \N^+$:
$$\big\| \sum_{k=0}^{d-1} \bE_k a_k\big\|_p \leq c_p\big\| \sum_{k=0}^{d-1} a_k\big\|_p.$$
\end{thm}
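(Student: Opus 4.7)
My plan is to derive this by duality from Junge's noncommutative Doob maximal inequality, which is the standard route (and indeed the one used in \cite{J2}). The case $p=1$ is immediate: since each $\bE_k$ preserves the trace and all operators involved are positive,
$$\Bigl\|\sum_{k=0}^{d-1}\bE_k a_k\Bigr\|_1 = \tau\Bigl(\sum_k \bE_k a_k\Bigr) = \sum_k \tau(a_k) = \Bigl\|\sum_k a_k\Bigr\|_1,$$
so $c_1=1$.

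For $1<p<\infty$, let $p'$ denote the conjugate exponent. Since $\sum_k \bE_k a_k\geq 0$, I would compute its $L_p$-norm by testing against positive elements:
$$\Bigl\|\sum_k \bE_k a_k\Bigr\|_p = \sup_{z\in L_{p'}(\N)^+,\,\|z\|_{p'}\leq 1} \tau\Bigl(z\sum_k \bE_k a_k\Bigr),$$
and rewrite each summand using the selfadjointness of $\bE_k$ for the trace: $\tau(z\bE_k a_k)=\tau(\bE_k(z)\,a_k)$. The key input is then Junge's Doob maximal inequality: for any $q\in(1,\infty]$ and $z\in L_q(\N)^+$, there exists $y\in L_q(\N)^+$ with $\bE_k z\leq y$ for all $k$ and $\|y\|_q\leq D_q\|z\|_q$. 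Applying this with $q=p'$ and using the positivity of $a_k$ (so that $\bE_k(z)\leq y$ implies $\tau(\bE_k(z)a_k)\leq\tau(y\,a_k)$ via the cyclic rewriting $\tau(y a_k)=\tau(a_k^{1/2}ya_k^{1/2})$) yields
$$\sum_k \tau(\bE_k(z)a_k)\leq \tau\Bigl(y\sum_k a_k\Bigr)\leq \|y\|_{p'}\Bigl\|\sum_k a_k\Bigr\|_p\leq D_{p'}\Bigl\|\sum_k a_k\Bigr\|_p.$$
Taking the supremum over $z$ gives the claim with $c_p=D_{p'}$, uniform in the length $d$ of the filtration.

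The main (indeed, only) obstacle is Junge's noncommutative Doob inequality itself, which is a deep result but one we are allowed to cite. Everything else is routine manipulation with the trace and the module property of conditional expectations. As a sanity check on the range of $p$: the constant $c_p$ necessarily blows up as $p\to\infty$, mirroring the degeneration of Doob's inequality as its parameter approaches $1$, and the statement genuinely fails at $p=\infty$ since $\sum_k \bE_k a_k$ can have operator norm of order $d\cdot\|\sum_k a_k\|_\infty$.
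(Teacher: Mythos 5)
The paper gives no proof of this statement at all: it is simply recalled as a known result and attributed to \cite{J2}, where it appears under the name ``dual Doob inequality.'' So there is nothing in the paper's text to compare against, and the interesting question is whether your proposed derivation stands on its own.

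It does, as a matter of logic, and the individual steps are all correct: the $p=1$ case by trace preservation, passage to the dual norm over positive test elements (legitimate since $\sum_k\bE_ka_k\geq 0$, so the extremal $z=\bigl(\sum_k\bE_ka_k\bigr)^{p-1}$ is positive), the rewriting $\tau(z\,\bE_k a_k)=\tau(\bE_k(z)\,a_k)$ via the module/self-adjointness properties of $\bE_k$, the monotonicity $\tau(\bE_k(z)a_k)\leq\tau(ya_k)$ from $\bE_k(z)\leq y$ and $a_k\geq 0$, and the final H\"older step. The one thing to flag is a mild circularity concern that your phrase ``indeed the one used in \cite{J2}'' gets backwards. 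Junge's actual route in \cite{J2} is the reverse of yours: he proves the dual Doob inequality \emph{first}, by a direct argument (positivity and a cyclic reduction, together with complex interpolation / Hilbert $C^*$-module techniques), and then deduces the maximal Doob inequality by the very duality you invoke. So if one opens \cite{J2} looking for the maximal inequality as a black box, one finds that its proof already contains the statement you are trying to prove. Your argument is perfectly valid as a reduction --- given the maximal inequality, the dual form follows cheaply --- but it would be misleading to present it as the way the result is actually established, and it does not give an independent proof. For the present paper's purposes, which only needs the statement as a citable input to Proposition \ref{martcp}, either way of recording the implication is fine.
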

\begin{proof}[Proof of Proposition \ref{martcp}]
  The first point is clear by construction. 

  To see that $\phi$ is bounded on $L_p(\M)$, we rely on the dual Doob
  inequality above. Assume first that $p\geq 2$.  The formal
  projection $P_1:L_p(\M)\to E_p\subset L_p(\M)$ onto words of length 1 has norm
  less than 4 (see \cite{RX}). Note that for $x\in \M$ if $P_1(x)=\sum_{k=0}^{d-1} x_k$ then
  $\phi(x)=\bE_0 \E x+\sum_{k=0}^{d-1} \F_k^+(x_k)$. Thus, define a map on $E_\infty$
  by $T(\sum_{k=0}^{d-1} x_k)=  \sum_{k=0}^{d-1}
  \F_{k}^+(x_{k})$ so that $\phi(x)=T(P_1(x))+\bE_0\E x$ for
  $x\in \M$. Hence we need to justify that $T$ is bounded for the
  $L_p$-norm independently of $d$.
  It suffices to check it for the three norms appearing in
  the free Rosenthal inequality.
  For the column norm, first note
  that
  $$\E |\F_{k}^+ (x_{k})|^2 \leq \E\F_{k}^+ |x_{k}|^2=
  \E_{k} |x_{k}|^2=\bE_{k}\E |x_{k}|^2.$$ From the dual Doob
  inequality in $L_{p/2}$ with $a_k=\E |x_k|^2$, we get that $T$ is bounded
  for the column norm.

  The row estimate is similar whereas the diagonal one is easy (with
  no constant depending on $p$.

  It is algebraically clear that $\phi$ is the projection onto $\gamma(\N)$.
  Noticing then that $\phi$ is an orthogonal projection on $L_2$ also gives
  that it extends to a bounded map on $L_{p'}$ with the same norm.
\end{proof}

Using our main estimate, we get
that the norms on $\N$ given $\|\gamma(x)\|_{E(\M)}$ are compatible
with interpolation.

\begin{cor}\label{mart}
  If $E$ is an interpolation space for $(L_2,L_p)$ with $2<p< \infty$
  then for $x\in \N$.
  \begin{eqnarray*}
    \| x \|_{E}&\approx_p&  \Big\| \Big(|\bE_0x|^2+ \sum_{k=1}^d
    \bE_{k-1}(|\dd x_k|^2)\Big) ^{1/2} \Big\|_E + \Big\| \Big(|\bE_0x^*|^2+
    \sum_{k=1}^d \bE_{k-1}(|\dd x_k^*|^{2})\Big) ^{1/2} \Big\|_E \\ &
    &\qquad\qquad+ \Big \|\sum_{k=0}^d \dd x_k\tens
    e_k \Big\|_{E(\N\overline\otimes\ell_\infty)} \end{eqnarray*} If $E$ is an
  interpolation space for $(L_p,L_2)$ with $1<p\leq 2$, then
  \begin{eqnarray*}
    \|x \|_{E}&\approx_p& \inf_{x=a+b+c; a,b,c\in \N}\Big\{  \Big\|
    \Big(|\bE_0 a|^2+ \sum_{k=1}^d \bE_{k-1}(|\dd a_k|^2)\Big)
    ^{1/2} \Big\|_E \\ & & \qquad \qquad + \Big\| \Big(|\bE_0b^*|^2+ \sum_{k=1}^d
    \bE_{k-1}(|\dd b_k^*|^{2})\Big) ^{1/2} \Big\|_E +  \Big\|\sum_{k=0}^d \dd
    c_k\tens e_k \Big\|_{E(\N\overline\otimes\ell_\infty)}\Big\} \end{eqnarray*}
  Moreover $a,b,c$ can be chosen to be independent of $E$ and $p$.
\end{cor}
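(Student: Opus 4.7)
The strategy is to apply Corollary \ref{maincor} to the sum of free centered variables $z := \gamma(x) - \bE_0 x = \sum_{k=1}^d \rho_{k-1}(\dd x_k)$ in the amalgamated free product $(\M, \E)$, and then to transfer the resulting norm equivalence back to $\N$ through the embedding $\gamma$. The required bridge is $\|x\|_{E(\N)} \approx \|\gamma(x)\|_{E(\M)}$, uniform in $d$ with constants depending only on the interpolation parameters.

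I would first establish this bridge by showing that $\gamma$ is an isomorphism $L_q(\N) \to \gamma(L_q(\N))$ for $q = 2$ and $q = p$, and then interpolating. On $L_2$ this is immediate from orthogonality of martingale differences together with freeness of the $\rho_{k-1}(\dd x_k)$. On $L_p$ for $2 < p < \infty$, combining Voiculescu's inequality applied to $z$ with the non-commutative Burkholder inequality yields $\|x\|_p \approx_p \|\gamma(x)\|_p$. Together with the $L_q$-boundedness of $\phi$ from Proposition \ref{martcp}, the map $\Psi := \gamma^{-1} \circ \phi$ is well defined and bounded $L_q(\M) \to L_q(\N)$ for $q = 2, p$ and satisfies $\Psi \circ \gamma = \id$. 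By interpolation $\gamma : E(\N) \to E(\M)$ and $\Psi : E(\M) \to E(\N)$ are bounded, giving $\|x\|_{E(\N)} \approx \|\gamma(x)\|_{E(\M)}$.

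For the first part of the corollary, since $L_p$ is an $(L_2, L_\infty)$-interpolation space by Riesz--Thorin, reiteration shows that $E$ is $(L_2, L_\infty)$-interpolation, so Corollary \ref{maincor} yields $\|z\|_{E(\M)} \approx \|z\|_{E, \cap}$. I then identify each of the three terms explicitly: freeness and centering of the $\rho_{k-1}(\dd x_k)$ over $\N$ collapse the cross terms in $\E(z^* z)$, while $\E \circ \rho_{k-1} = \bE_{k-1}$ on $\N_k$ gives $\E(z^* z) = \sum_{k=1}^d \bE_{k-1}|\dd x_k|^2$, and similarly for the row norm. For the diagonal, $y_k \otimes e_k \mapsto \rho_{k-1}(y_k) \otimes e_k$ is a trace-preserving $*$-isomorphism of $\bigoplus_k \N_k \otimes e_k \subset \N \overline\otimes \ell_\infty$ onto its image in $\M \overline\otimes \ell_\infty$, hence $E$-isometric, so $\|z\|_{E, d} = \|\sum_k \dd x_k \otimes e_k\|_{E(\N \overline\otimes \ell_\infty)}$. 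Absorbing $\bE_0 x$ as the $k = 0$ contribution in each norm recovers the right-hand side of the corollary.

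For the case $1 < p \leq 2$, I would instead use the sum-decomposition side of Corollary \ref{maincor}, $\|z\|_{E(\M)} \approx \|z\|_{E, \Sigma}$, together with the algebraic decomposition of $z$ from Proposition \ref{decl1}, which does not depend on $E$. Pulling its three summands back to $\N$ via $\Psi$ yields an $E$-independent decomposition of $x - \bE_0 x$, and $\bE_0 x$ is then placed into one of the summands. The main obstacle will be verifying that the $\M$-side column, row and diagonal norms of the summands control the $\N$-side conditioned-Hardy norms of their images under $\Psi$; this should rely on the intertwining $\F_{k-1} \circ \gamma = \gamma \circ \bE_{k-1}$ and the concrete description of $\Psi$ via conditional expectations, so that the $\N$-side martingale differences of $\Psi(a)$ are essentially $\rho_{k-1}^{-1}$ applied to the $\mathring \M_{k-1}$-component of $a$.
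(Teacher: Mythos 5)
Your treatment of the case $2<p<\infty$ matches the paper's: apply Corollary \ref{maincor} to $\gamma(x)$, identify the three conditioned norms explicitly using $\E\circ\rho_{k-1}=\bE_{k-1}$ and freeness, and transfer via boundedness of $\gamma$ and $\gamma^{-1}\phi$ on $L_2$ and $L_p$ (Burkholder plus Proposition \ref{martcp}) and interpolation. That part is fine.

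For $1<p\leq 2$ you correctly identify the right scheme — decompose $z=\gamma(x)$ in $E_\infty$ via Proposition \ref{decl1}/Corollary \ref{maincor} as $z=a'+b'+c'$, push each piece through $\phi$ and pull back via $\gamma^{-1}$ to get an $E$-independent decomposition $x=a+b+c$ in $\N$ — but what you call the ``main obstacle'' is indeed the crux, and you leave it unresolved. Two specific problems. First, the intertwining relation you invoke, $\F_{k-1}\circ\gamma=\gamma\circ\bE_{k-1}$, is false: since $\rho_{j-1}(\dd x_j)$ is centered in $\M_{j-1}$ over $\N$, and for $j\neq k$ lives in a free factor different from $\M_{k-1}$, one computes $\F_{k-1}\gamma(x)=\bE_0 x$, not $\gamma(\bE_{k-1}x)$; the correct identities to use are $\phi\circ\gamma=\id_{\gamma(\N)}$ and $\phi(a')=T(P_1 a')+\bE_0\E a'$ with $T(\sum_k x_k)=\sum_k\F_k^+(x_k)$. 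Second, and more seriously, after unwinding $\rho_{k-1}(\dd a_k)=\F_{k-1}^+(a'_{k-1})$ and applying the operator Jensen inequality for $\F_{k-1}^+$, the inequality you need reduces to a dual Doob bound of the form
$$\Big\|\Big(\sum_k\bE_{k-1}\,\E|a'_{k-1}|^2\Big)^{1/2}\Big\|_E\;\lesssim_p\;\Big\|\Big(\sum_k\E|a'_{k-1}|^2\Big)^{1/2}\Big\|_E,$$
i.e.\ boundedness of $T$ for $\|\cdot\|_{E,c}$. You have it at $L_2$ and $L_p$ (from Proposition \ref{martcp}), but you cannot simply interpolate: $\|\cdot\|_{q,c}$ is not an $L_q$-norm on a fixed algebra in an obviously compatible way, and the assignment $(c_k)\mapsto(\sum_k\bE_{k-1}c_k)^{1/2}$ is not linear, so the abstract ``$E$ is an $(L_p,L_2)$-interpolation space'' hypothesis does not directly apply. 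The paper closes this gap by invoking Proposition 2.8 of \cite{J2}, which provides compatible embeddings $u_q:\M\to\tilde\M$ with $\|u_q(x)\|_{L_q(\tilde\M)}=\|x\|_{q,c}$ and complemented ranges, thereby converting conditioned-norm boundedness of $T$ into honest $L_q\to L_q$ boundedness where interpolation is legitimate. Without that ingredient (or an equivalent linearisation of the conditioned norms), your second-part argument does not go through.
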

\begin{proof}
For $\infty>p\geq 2$ by corollary \ref{maincor}, the quantity on the
right hand side is equivalent to $\|\gamma(x)\|_E$. By the Burkholder
inequality \cite{JX} $\|\gamma(x)\|_p\approx_p \| x\|_p$. Thus the map
$\gamma: \N\to \M$ and $\gamma^{-1}\phi : \M\to \N$ extend to bounded
maps on $E$ by interpolation.

 For $1< p\leq 2$, the argument is similar, we also have
 $\|x\|_E\approx_p \|\gamma(x)\|_E$ by the Burkholder inequality. Then
 one has to check that the quantity on the right hand side is also
 equivalent to $\|\gamma(x)\|_E$.  Clearly we can assume $\bE_0x=0$ by
 changing the constants. Then, if $\gamma(x)=a'+b'+c'$ with $a', b',
 c'\in E_\infty$ given by Corollary \ref{maincor}, then
 $\gamma(x)=\phi(a')+\phi(b')+\phi(c')$, and $\phi(a')=\gamma(a)$ for
 some $a\in \N$ with $\bE_0a=0$ and $\| \Big( \sum_{k=1}^d
 \bE_{k-1}(|\dd a_k|^2)\Big) ^{1/2}\|_E=\|T(a')\|_{E,c}.$ We need to
 justify that $T$ is bounded for $\|.\|_{E,c}$. First we can assume
 $\N_*$ is separable because with deal with finite families. By
 Proposition 2.8 in \cite{J2}, there are a von Neumann algebra $\tilde \M$, maps
 $u_q :\M \to \tilde \M$ for $1<q<\infty$, that are compatible in the
 sense of interpolation such that $\|u_q(x)\|_q=\|x\|_{q,c}$; moreover
 the closure of the range of $u_q$ is complemented in $L_q(\tilde \M)$.
 Thus we obtain by interpolation that $\|T(a')\|_{E,c}\leq
 c_p\|a'\|_{E,c}$ as we have shown in Proposition \ref{martcp} that
 $TP_1$ is bounded for the norm $\|.\|_{p,c}$ and $\|.\|_{2,c}$. The
 argument for $b'$ is similar and simpler for $c'$.
\end{proof}
\begin{rk}{\rm Instead of using Proposition 2.8 in \cite{J2}, 
we could have used our main estimate. Indeed, one can show for
instance that if $(\N,\tau)\subset (\M,\tau)$ and a map $T$ on $\M$ is bounded
for the norms $\|.\|_{p_i}$ and $\|.\|_{p_i,c}$ with $1\leq
p_0,p_1\leq 2$ then $T$ is bounded for the norm $\|.\|_{E,c}$ for any
$(L_{p_0},L_{p_1})$-interpolation space. This follows from a standard
limit procedure that can be found in \cite{J1}. Let $\tilde \M_n$ be the free
product over $M_n(\N)$ of $n$ copies of $M_n(\M)$ with amalgamation over
$M_n(\N)$. For $x\in \M$, denote by $\pi_{i}(x)$ its $i^{th}$
copy. One can check using Corollary \ref{maincor} that with
$\gamma_n(x)=\frac 1{\sqrt n} \sum_{j=1}^n \pi_j(x)\otimes e_{j,1}$,
we have $\lim_n \|\gamma_{n}(x)\|_{E}\approx\|x\|_{E,c}$. Moreover the
closure of the range of $\gamma_n$ is complemented in $\tilde \M_n$
(uniformly in $n$) in any fully symmetric space. This is enough to conclude, we leave the details to the 
interested reader. 
}
\end{rk}

\begin{rk}{\rm
As we already pointed it out in Remark \ref{compa}, the result was
known from \cite{D,RW} in terms of Boyd indices but one can not use
$L_2$ as an end point without extra assumptions on $E$. We also
recover as in \cite{RW} that when $p<2$, the decomposition in the
infimum can be chosen independently of $E$.  Looking carefully at the
proof shows that when $p$ goes to 1 or $\infty$ the equivalence
constants are not optimal when $E=L_p$.  It is known from \cite{JRW}
that one can actually find a decomposition independent of $p$ for
$E=L_p$ $ 1<p\leq 2$ with an optimal behavior of the constants.}
\end{rk}

\begin{rk}\label{jsmart}{\rm If we assume merely that $E$ is a fully symmetric 
function space on $(0,1)$ (with $\|1_{(0,1)}\|_E=1$), it is possible
to somehow extend $E$ to fully symmetric space on $(0,\infty)$. One
can choose for instance for $g\in L_0(0,\infty)$,
$\|g\|_{F}=\|\mu(g)1_{[0,1]}\|_E$ or $\|g\|_{Z_E^2}=\|g\|_F+ \|
g\|_{L_1+L_2}$.  It is equivalent to the definition of $Z_E^2$ above
and moreover for $f$ supported on $(0,1]$, then $\|f\|_E\approx
  \|f\|_{Z_E^2}$.  With any of theses constructions, Corollary
  \ref{mart} implies that the Johnson-Schechtman inequality in Theorem
  1.5 in \cite{JSZ} is true if $E$ is an interpolation space for
  $(L_2,L_p)$ for some $2\leq p<\infty$. It was stated only for $p=4$
  there.  Actually $Z_{E}^2$ is up to some constant an interpolation
  space for $(L_2,L_p)$ if $E$ is and it is somehow the bigger norm on
  $(0,\infty)$ extending $E$ with that property.}
\end{rk}

\subsection{Other Rosenthal inequalities}
There are many places where the norms $\|.\|_{E,\cap}$ or
$\|.\|_{E,\Sigma}$ appear. Just as in the previous section it is
possible to interpolate norm inequalities knowing the result for
$L_p$. Corollary 3.18 in \cite{MR} is one example. We simply state the
result leaving the proof to the interested reader. For an element in the
free product $(\M,\tau)=*_{\N}(\M_i,\tau)$ we write $x\in \mathcal L_i
\cap \mathcal R_i$ if $x$ is a linear combination of reduced words
starting and ending with a letter in $\mathring \M_i$, then
 \begin{cor}\label{rosfree}
  If $E$ is an interpolation space for $(L_2,L_p)$ with $2<p< \infty$
  then for any $x=\sum_i x_i\in \M$ with $x_i\in\mathcal L_i \cap \mathcal
  R_i$,
$$ \| x \|_{E} \approx_p \big\| \big(\E (x^*x)\big)^{1/2}\big\|_E +\big\| \big(\E
  (xx^* )\big)^{1/2}\big\|_E + \big\| \sum_i x_i\otimes e_i\big\|_{E(\M\overline \otimes \ell_\infty)} .$$
\end{cor}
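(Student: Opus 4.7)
The plan is to run the three-step program of Section~3 with the space $E_p$ of length-one words replaced by $\widehat E_p$, the closure in $L_p(\M)$ of $\bigoplus_i (\mathcal L_i \cap \mathcal R_i)$. The starting point is a Voiculescu-type inequality at $p=\infty$: for $x=\sum_i x_i$ with $x_i\in \mathcal L_i \cap \mathcal R_i$,
\[
\max\{\|x\|_{\infty,c},\|x\|_{\infty,r},\|x\|_{\infty,d}\}\leq \|x\|_\infty \lesssim \|x\|_{\infty,c}+\|x\|_{\infty,r}+\|x\|_{\infty,d}.
\]
This can be taken from Corollary~3.18 in \cite{MR} at $p=\infty$, or derived directly by grouping the middle letters of each reduced word $a m b \in \mathcal L_i\cap \mathcal R_i$ with the $\M_i$-factors at the endpoints, identifying $\bigoplus_i \mathcal L_i\cap \mathcal R_i$ with length-one words in an auxiliary amalgamated free product, and invoking the standard Voiculescu inequality there.

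With this in hand, the $\E$-polar decomposition of Lemma~\ref{polar} applies since each $\mathcal L_i\cap \mathcal R_i$ is a right $\N$-submodule of $L_2(\M)$ (right multiplication by $\N$ only affects the rightmost letter, which stays in $\mathring \M_i$). The dualities of Lemmas~\ref{duality}, \ref{duality2}, \ref{duality3} and the compactness/truncation argument of Lemma~\ref{L2borne} and Proposition~\ref{infat} go through without modification, producing an optimal $\|\cdot\|_{1,\Sigma}$-decomposition $x=a+b+d$ inside $\widehat E_2$. The symmetrization via $\tilde\M_i = \M_i\oplus\M_i$ and the argument of Proposition~\ref{decl1} then yield the algebraic form $x_i = u_i\alpha + \beta u_i + u_i\gamma_i = u_i\alpha + \beta u_i + \delta_i u_i$ with $u_i\in \mathcal L_i\cap \mathcal R_i$, $\alpha,\beta\in \N^+$, and the usual support relations. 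Here $\gamma_i,\delta_i$ no longer live in $\M_i^+$ but in a larger positive cone; this only enters the proof through the relations $s(\gamma_i)\leq |u_i|^2\leq 1$, which remain intact as they come purely from the polar decomposition.

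The norm estimates of Theorem~\ref{mainineq} and Proposition~\ref{majsum} then transfer verbatim: the upper bound uses the Voiculescu inequality applied to $\sum_i \mathring{(u_i s_i)}$ with the same support computation $s_i u_i^* = u_i^* u_i s_i u_i^*$, and the lower bound uses $\E(\mathring u^* x)$ and $\E(x \mathring u^*)$ as before. The interpolation argument of Corollary~\ref{maincor} (between $L_2$ and $L_p$, with the $L_p$ endpoint from \cite{MR}) delivers the stated equivalence for any $(L_2,L_p)$-interpolation space $E$. The main obstacle is the first step: one must verify the $L_\infty$ Voiculescu inequality in this bimodule setting, which is not treated in the present paper; once imported from \cite{MR}, the remaining steps are routine adaptations of Section~3.
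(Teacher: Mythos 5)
The paper itself gives no proof of this corollary---it explicitly writes ``We simply state the result leaving the proof to the interested reader'' and points to \cite{MR} for the $L_p$ endpoint---so I am assessing your proposal on its own, and it is broadly faithful to what the paper has in mind (re-run Section~3 with $\mathcal L_i\cap\mathcal R_i$ in place of $\mathring\M_i$, importing an endpoint inequality). That said, the claim that Theorem~\ref{mainineq} ``transfers verbatim'' hides a real verification. In the diagonal estimate there one invokes the Voiculescu inequality for $\sum_i\mathring{(u_is_i)}$, and this only makes sense if each $\mathring{(u_is_i)}$ is again a ``length-one'' object. In the original proof this is automatic because $u_i,s_i\in\M_i$. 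In your setting $u_i$ ranges over $\mathcal L_i\cap\mathcal R_i$ while $s_i$ is a spectral cut of $\gamma_i=|d_i|$, and it is not immediate that $u_is_i$, after centering, lies in $\overline{\operatorname{span}(\mathcal L_i\cap\mathcal R_i)}$. The same issue reappears in the lower-bound argument with the elements $p_i$. None of this is addressed by noting that $s(\gamma_i)\leq|u_i|^2$; what one actually needs is the algebraic fact that $\N\oplus\operatorname{span}(\mathcal L_i\cap\mathcal R_i)$ is a $*$-subalgebra of $\M$, so that all the auxiliary elements produced by the polar decompositions stay in it.

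Once you make that verification, however, you have in hand something much stronger than you use, and it is implicit in your own remark about ``grouping the middle letters'': set $\hat\M_i:=W^*\big(\N\cup(\mathcal L_i\cap\mathcal R_i)\big)\subset\M$. Because $\N\oplus\operatorname{span}(\mathcal L_i\cap\mathcal R_i)$ is a $*$-algebra, $\hat\M_i=\N\oplus\mathring{\hat\M_i}$ with $\mathring{\hat\M_i}=\overline{\operatorname{span}(\mathcal L_i\cap\mathcal R_i)}$, and the $\hat\M_i$ are free with amalgamation over $\N$ inside $\M$: a product of reduced words with alternating $\mathring\M_{i_k}$-endpoints (adjacent indices distinct) has no cancellation at the junctions and therefore lies outside $\N$. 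Each $x_i\in\mathcal L_i\cap\mathcal R_i$ is a length-one element of the sub-free-product $*_{\N}\hat\M_i$, the conditional expectation $\E$ in the statement is the expectation onto the amalgam $\N$, and the $E$-norms are unchanged since they depend only on distributions under the common trace. So Corollary~\ref{maincor}, applied to $*_{\N}\hat\M_i$, \emph{is} the statement---with universal constants, for every $(L_2,L_\infty)$-interpolation space (a class containing all $(L_2,L_p)$-interpolation spaces), and with no need to import an endpoint from \cite{MR} or to rebuild any of the Section~3 machinery. Your plan is therefore not wrong, but you are re-deriving a special case of a theorem you already have; the short proof is to recognize the amalgamated free product hiding in the hypothesis and invoke Corollary~\ref{maincor} directly.
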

One can also get a statement for $1<p<2$.

\subsection{Free Khinchin inequalities for words of length $d$}

In this subsection, we explain how to apply the method used above to
free Khinchin inequalities for words of fixed length $d \in
\bN^*$. The proof follows the same steps as the one of theorem
\ref{mainineq}. On one hand, the setting of Khinchin inequalities
simplifies the arguments considerably since all the variables 
are automatically centered and the duality is straightforward. On the
other hand, since we consider words of length $d$, we cannot avoid to
deal with technicalities of combinatorial nature.

Our basic tool, replacing Voiculescu's inequality, is an
inequality due to Buchholz \cite{B} and known to Haagerup. Before
stating it, let us introduce some notations. Consider the free group
with $n$ generators $\bF_{n}$ and let $\{g_i\}_{i\in \dbox{n}}$ be a
set of generators of $\bF_{n}$ with $\dbox{n} = \set{1,\dots,n}$. For
any $d\in\bN^*$ define $W_d^+ = \set{g_{i_1}\dots g_{i_d} :
  i\in\dbox{n}^d}$ and identify $W_d^+$ with $\dbox{n}^d$. Let $\M$ be
finite, $\N = \M \overline{\tens} VN(\bF_{n})$ and $\lambda : \bF_{n}
\to VN(\bF_{n})$ the natural inclusion. Let $x:W_d^+\to\M$. The
Buchholz-Haagerup inequality is expressed in terms of the matrices
$(x^{[k]}_{\alpha,\beta})_{\alpha\in W_{d-k}^+,\beta\in W_k^+}$ in
$\bM_{n^{k},n^{d-k}}(\M)$ defined by:
$$x^{[k]}_{\alpha,\beta} = x(\alpha\beta),$$ where $k$ ranges from
$0$ to $d$, see \cite{Pos}. We consider only words in the generators
(and not their inverses) to avoid cancellations and to make the
presentation easier.

It will be useful, in particular when stating lemma \ref{lem:alg}, to
embed these matrices in a common algebra $\A := \M \tens \bM_n^{\tens
  d}$ with its natural trace $\tau_\M\otimes tr^{\otimes d}$. For
$i\in \dbox{n}$, denote by $c_k$ (resp. $r_k$) the element $e_{k,1}$
(resp. $e_{1,k}$) in $\bM_n$. Define:
\begin{align*}
  i_k \colon \bM_{n^{k},n^{d-k}}(\M) &\to \A \\ a =
  (a_{i,j})_{i\in\dbox{n}^{d-k},j\in\dbox{n}^k} &\mapsto \sum_{i\in
    \dbox n^{d-k}, j\in \dbox n^{k}} a_{i,j} \tens c_{i_{1}} \tens
  \dots \tens c_{i_{d-k}}\tens r_{j_1} \tens \dots \tens r_{j_k}.
\end{align*}
Denote by $\A_k$ the range of $i_k$, which is by construction
isomorphic to $\bM_{n^{d-k},n^k}(\M)$. Define:
$$G(x) := \sum_{i\in \dbox{n}^d} x(i) \tens \lambda(g_{i_1}\dots
g_{i_d}), \AND\ [x]_k = i_k(x^{[k]}),$$ or more explicitly:
$$[x]_k := \sum_{i\in \dbox{n}^d} x(i) \tens c_{i_1} \tens \dots \tens
c_{i_{d-k}} \tens r_{i_{d-k+1}} \tens \dots \tens r_{i_d}.$$ We extend
those notations to any $X \in \A_k$ the following way: if $X = [x]_k$
for some $x\in F(W_d^+,\M)$ and $k'\in \set{0,\dots,d}$ then
$\obox{X}_{k'} := [x]_{k'}$ and $G\p{X} := G(x)$. Denote also by $tr_k
: \A \to \M\otimes \bM_n^{\tens k}$ the operator defined by $tr_k = Id
\tens tr^{\tens d-k}\tens Id^{\tens k}$.

Finally, we identify $\M\otimes \bM_n^{\tens k}$ with $\M \tens
e_{1,1}^{\tens d-k}\tens \bM_n^{\tens k}$ in $\A$; this is a non
unital trace preserving $*$-homomorphism. So that for $e\in \M\otimes
\bM_n^{\tens k}$ and $X\in \A_k$, we have $Xe\in \A_k$.

\begin{thm}[Buchholz, Haagerup]
For any $x$ in $F(W_d^+,\M)$,
$$\max_{0\leq k\leq d} \norm{[x]_k}_{\A} \leq \norm{G(x)}_{\N} \leq
\sum_{k=0}^d \norm{[x]_k}_{\A}.$$
\end{thm}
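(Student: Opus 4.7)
The plan is to realize $G(x)$ via the regular representation of $VN(\bF_n)$ on $\ell^2(\bF_n)$ and to decompose it according to how left multiplication by a positive word $w = g_{i_1}\cdots g_{i_d}$ modifies the reduced-word length. For each $k \in \set{0,\dots,d}$, let $\lambda_k(w)$ denote the restriction of $\lambda(w)$ to those reduced words $v$ whose first $k$ letters equal $g_{i_d}^{-1} g_{i_{d-1}}^{-1}\cdots g_{i_{d-k+1}}^{-1}$ and whose $(k+1)$-th letter (if any) is not $g_{i_{d-k}}^{-1}$. Then $\lambda(w) = \sum_{k=0}^d \lambda_k(w)$, and $\lambda_k(w)$ shifts reduced-word length by exactly $d-2k$, so the ranges of the $\lambda_k(w)$ for distinct values of $k$ are mutually orthogonal on any fixed-length input subspace. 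Setting $G_k(x) = \sum_{w\in W_d^+} x(w)\tens \lambda_k(w)$, we obtain $G(x) = \sum_{k=0}^d G_k(x)$.

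For the upper bound, the triangle inequality reduces matters to showing $\norm{G_k(x)}_\N \leq \norm{[x]_k}_\A$ for each $k$. Regrouping $w = \alpha\beta$ with $\alpha\in W_{d-k}^+$ and $\beta\in W_k^+$, the factorization $\lambda_k(\alpha\beta) = s_\alpha s_\beta^*$ holds on the relevant subspace, where $s_\alpha$ and $s_\beta$ behave as Cuntz-like creation operators indexed by $W_{d-k}^+$ and $W_k^+$. Consequently $G_k(x)$ is identified, modulo the non-cancellation constraint on tails, with the block-matrix operator $\sum_{\alpha,\beta} x(\alpha\beta)\tens e_{\alpha,\beta}$ acting between $\ell^2(W_k^+)$ and $\ell^2(W_{d-k}^+)$ tensored with the identity on the surviving tail; its norm is precisely $\norm{[x]_k}_\A$.

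For the lower bound, let $P_-$ be the projection in $B(\ell^2(\bF_n))$ onto $\spn\set{\delta_{\beta^{-1}} : \beta\in W_k^+}$ and $P_+$ the projection onto $\spn\set{\delta_\alpha : \alpha\in W_{d-k}^+}$. By the length-shift property, $P_+\,\lambda_{k'}(w)\,P_- = 0$ for $k'\neq k$, so $P_+\,G(x)\,P_- = P_+\,G_k(x)\,P_- = G_k(x)P_-$. The latter operator computes $\delta_{\beta^{-1}} \mapsto \sum_\alpha x(\alpha\beta)\tens \delta_\alpha$, which is precisely the matrix action of $[x]_k$ and therefore has norm $\norm{[x]_k}_\A$. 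Hence $\norm{[x]_k}_\A \leq \norm{G(x)}_\N$.

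I expect the main obstacle to be the bookkeeping in the identification $\norm{G_k(x)}_\N = \norm{[x]_k}_\A$: one must carefully reverse the order of the last $k$ letters of $w$ so that the row tensor $r_{i_{d-k+1}}\tens\cdots\tens r_{i_d}$ in the embedding $i_k$ matches the annihilation data $\beta$, while $c_{i_1}\tens\cdots\tens c_{i_{d-k}}$ matches the creation data $\alpha$. Verifying that the non-cancellation tail constraint does not affect the norm -- e.g.\ by decomposing the ambient Hilbert space into tail sectors on which $G_k(x)$ acts as $[x]_k$ tensored with the identity -- is the technical core of the argument, but once it is in place both bounds reduce to standard block-matrix norm identities.
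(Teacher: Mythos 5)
The paper does not actually prove the Buchholz--Haagerup inequality: it cites \cite{B} and \cite{Pos} for the statement, and only the dual $L_1$ estimate in Remark~4.5 is given an argument. So there is no in-paper proof to compare against; what you have produced is a reconstruction, and it is a correct one, following the same cancellation-length decomposition that underlies Haagerup's original argument for the scalar case and Buchholz's operator-coefficient refinement. Your decomposition $\lambda(w)=\sum_{k=0}^d\lambda_k(w)$ according to the number of cancellations, the length-shift by $d-2k$, and the lower-bound compression by $P_+$ and $P_-$ are all sound.

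For the upper bound you gesture at ``Cuntz-like creation operators'' and ``tail sectors,'' acknowledging the non-cancellation bookkeeping as the technical core. A crisper way to close this is to set $V_k:\ell^2(\bF_n)\to \bC^{n^k}\otimes\ell^2(\bF_n)$, $V_k\delta_v=e_\beta\otimes\delta_{v'}$ if $v=\beta^{-1}v'$ reduced with $\beta\in W_k^+$ (and $0$ otherwise), and $W_{d-k}:\ell^2(\bF_n)\to\bC^{n^{d-k}}\otimes\ell^2(\bF_n)$, $W_{d-k}\delta_v=e_\alpha\otimes\delta_{v'}$ if $v=\alpha v'$ reduced with $\alpha\in W_{d-k}^+$. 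Both are partial isometries. One then checks directly on basis vectors that
$$G_k(x)\;=\;W_{d-k}^{\,*}\,\bigl([x]_k\otimes 1\bigr)\,V_k,$$
the adjoint $W_{d-k}^{\,*}$ killing precisely the terms $e_\alpha\otimes\delta_{v'}$ for which $\alpha v'$ is not reduced --- which is exactly the non-cancellation constraint built into $\lambda_k$. This yields $\norm{G_k(x)}\leq\norm{[x]_k}$ without any case analysis on tail sectors, and together with your compression argument it confirms that $\norm{G_k(x)}=\norm{[x]_k}$ exactly, as you asserted. One small point of caution: the ``Cuntz-like'' picture is only a heuristic here, since the constraints depend on $\alpha$ and $\beta$ jointly; the factorization through the two explicit partial isometries is what actually does the work.
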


\begin{rk}\label{rem:L1}{\rm
We will also use the following dual inequality:}
\begin{equation*} 
\norm{G(x)}_{L_1(\N)} \leq \inf_{0\leq k\leq d}
\norm{[x]_k}_{L_1(\A)}.
\end{equation*}
\end{rk}
\begin{proof}
Fix $0\leq k \leq d$. Using the polar decomposition, we can decompose
$[x]_k = [y]_ka$ with $a \in \M\tens \bM_n^{\tens k}$ such that
$\norm{[x]_k}_1 = \norm{[y]_k}_2\norm{a}_2$. Write:
$$a = \sum_{j,m\in \dbox n^k} a_{j,m}\tens e_{1,1}^{\tens d-k} \tens
e_{j_1,m_1} \tens \dots \tens e_{j_k,m_k}.$$ Consider now the algebra
$\N \overline{\tens} VN(\bF_{k})$ with $(h_i)_{i\in\dbox k}$
designating a set of generators for the new copy of $\bF_{k}$. Define:
$$Y = \sum_{i\in\dbox n^{d-k},j\in \dbox n^k} y_{i,j} \tens
\lambda(g_{i_1}\dots g_{i_{d-k}}h_{j_1}\dots h_{j_k}),$$ and,
$$A = \sum_{j,m\in\dbox n^k} a_{j,m} \tens \lambda(h^{-1}_{j_k}\dots
h^{-1}_{j_{1}}g_{m_1}\dots g_{m_k}).$$ Note that if $\bE$ designates
the conditional expectation from $\A \overline{\tens}
VN(\bF_{\infty})$ to $\A$, $\bE(YA) = G(x)$. Therefore,
$\norm{G(x)}_1\leq \norm{Y}_2\norm{A}_2 = \norm{y}_2\norm{a}_2 =
\norm{[x]_k}_1$.
\end{proof}
\begin{rk}\label{normproj}{\rm Dualizing the above Remark implies that the norm of the projection in $\N$ onto words of length $d$ in the 
generators is bounded by $d+1$. This is different from the constant
$2d$ that appears in \cite{Pos} for the projection onto all words of
length $d$. Of course, this extends to any fully symmetric space using
duality and interpolation.}
\end{rk}
\begin{rk}
{\rm Since we consider only words in $W_d^+$, the Buchholz-Haagerup
  inequality can be improved. Indeed, by \cite{dlS},
$$ \norm{G(x)}_{\N} \leq 4^5\sqrt{e}\p{ \sum_{k=0}^d
    \norm{[x]_k}_{\A}^2 }^{1/2}.$$ As a consequence, the asymptotic
  behavior of constants appearing in the rest of this section can be
  made more precise. For example, the constant $d+1$ in lemma
  \ref{lem:G<A} can be replaced by $4^5\sqrt{e(d+1)}$.  }\end{rk}

To simplify some notation if $\alpha=\sum_j m_j \otimes v_j\in \M\tens
\bM_n^{\tens k}$ with $m_j\in \M$ and $v_j\in \bM_n^{\tens k}$, and
$\beta\in \bM_n^{\tens {k'-k}}$ with $k'\geq k$, we denote
by $\alpha\tens \beta\in \M\tens \bM_n^{\tens k'}$ the element $\sum_j
m_j\tens \beta\tens v_j$.

 The following lemma can be easily checked by the
reader. It is constituted of the algebraic identities that will allow
us to bypass the difficulty caused by considering words of length $d$.
\begin{lemma}\label{lem:alg}
Let $0\leq k \leq k' \leq d$. Let $a,b: \dbox{n}^d\to \M$ and $\alpha
\in \M\tens \bM_n^{\tens k}$. Then,
\begin{enumerate}[(i)]
\item $\tau_{\N}(G(a)^*G(b)) = \tau_{\A}([a]_k^*[b]_k)$,
\item $[a]_k^*[b]_k = tr_k\p{[a]_{k'}^*[b]_{k'}}$,
\item \label{iii} $[a]_k\alpha = \obox{[a]_{k'}( 1_{\bM_n}^{\tens
    k'-k}\tens \alpha)}_k$,
\item \label{iv} $[a]_{k'}(1_{\bM_n}^{\tens k'-k}\tens \alpha )=[[a]_k
  \alpha]_{k'}$.
\end{enumerate}
\end{lemma}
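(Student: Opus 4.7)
The plan is to verify each identity by direct computation. All four are multilinear in $a,b,\alpha$, so it suffices to check them on pure tensors; in particular I will take $\alpha = m \tens e_{p_1,q_1} \tens \dots \tens e_{p_k,q_k}$ with $m\in\M$. Everything then reduces to elementary products of matrix units in $\bM_n$: the relevant rules are $c_i r_j = e_{i,j}$, $r_i c_j = \delta_{i,j}\, e_{1,1}$, $c_i^* = r_i$, together with the absorption identities $c_i\, e_{1,1} = c_i$, $r_i\cdot 1_{\bM_n} = r_i$, $r_i\, e_{p,q} = \delta_{i,p}\, r_q$, the trace formula $tr(e_{i,j}) = \delta_{i,j}$, and the group-trace identity $\tau_{VN(\bF_n)}(\lambda(g)) = \delta_{g,e}$.

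For (i), I would expand $G(a)^*G(b)$ and apply $\tau_\N$: since the $g_{i_1}\dots g_{i_d}$ are reduced words in the generators, the trace selects $i=j$ and returns $\sum_i \tau_\M(a(i)^*b(i))$. The same value comes out of $\tau_\A([a]_k^*[b]_k)$, whose first $d-k$ tensor slots are $r_{i_\ell}c_{j_\ell} = \delta_{i_\ell,j_\ell}\, e_{1,1}$ and whose last $k$ slots are $c_{i_\ell}r_{j_\ell} = e_{i_\ell,j_\ell}$, after which $tr$ kills off-diagonal matrix units. Identity (ii) is the same expansion with $k'$ in place of $k$: the first $d-k'$ slots of $[a]_{k'}^*[b]_{k'}$ produce $e_{1,1}$ (after absorbing $\delta_{i_\ell,j_\ell}$), the last $k'$ slots are $e_{i_\ell,j_\ell}$, and applying $tr_k = \id \tens tr^{\tens d-k} \tens \id^{\tens k}$ traces away the $k'-k$ middle matrix units, producing additional Kronecker deltas. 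Under the identification of $\M \tens \bM_n^{\tens k}$ with $\M \tens e_{1,1}^{\tens d-k} \tens \bM_n^{\tens k}$ inside $\A$, the result matches the direct expression for $[a]_k^*[b]_k$.

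Identities (iii) and (iv) fall to a single bookkeeping computation. When $[a]_{k'}$ is multiplied on the right by the embedded element $m \tens e_{1,1}^{\tens d-k'} \tens 1_{\bM_n}^{\tens k'-k} \tens e_{p_1,q_1} \tens \dots \tens e_{p_k,q_k}$, the $c_{j_\ell}$ at positions $1,\dots,d-k'$ are left unchanged (since $c_j e_{1,1} = c_j$), the $r_{j_\ell}$ at the middle positions $d-k'+1,\dots,d-k$ are left unchanged (since $r_j \cdot 1_{\bM_n} = r_j$), and at the last $k$ positions each $r_{j_{d-k+\ell}}$ becomes $\delta_{j_{d-k+\ell},p_\ell}\, r_{q_\ell}$. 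The outcome is $[b]_{k'}$ for the function $b$ given by $b(i) = a(i_1,\dots,i_{d-k},p_1,\dots,p_k)\, m$ when $(i_{d-k+1},\dots,i_d) = (q_1,\dots,q_k)$ and $b(i) = 0$ otherwise. Running exactly the same computation with $k'$ replaced by $k$ gives $[a]_k\alpha = [b]_k$, which yields (iii) upon applying $\obox{\cdot}_k$; identity (iv) is precisely the statement that both sides equal $[b]_{k'}$. The proof is pure bookkeeping, and the only thing one really needs to watch is the position of each $c$- or $r$-factor in each tensor slot under the various identifications.
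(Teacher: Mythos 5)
Your proof is correct. The paper states this lemma without proof ("The following lemma can be easily checked by the reader"), and your direct verification via pure tensors and the matrix-unit relations $r_ic_j = \delta_{i,j}e_{1,1}$, $c_ir_j = e_{i,j}$, $c_ie_{1,1}=c_i$, $r_ie_{p,q}=\delta_{i,p}r_q$ is exactly the intended computation; your reading of $tr$ as the unnormalized trace on $\bM_n$ (so $tr(e_{1,1})=1$) is the right one, being forced both by (i) and by the paper's claim that the embedding $\M\tens\bM_n^{\tens k}\hookrightarrow\M\tens e_{1,1}^{\tens d-k}\tens\bM_n^{\tens k}\subset\A$ is trace preserving.
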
 
We can now start to reproduce the scheme of proof of the previous
sections, starting with proving the existence of an optimal
decomposition.
\begin{prop}\label{prop:exopt}
Let $x\in F(W_d^+,\M)$. There exists $y_0,\dots,y_d \in F(W_d^+,\M)$
such that $y_0+\dots+y_d = x$ and:
$$ \sum_{k=0}^d \norm{[y_k]_k}_{L_1(\A)} = \inf_{z_0 + \dots + z_d =
  x}\set{\sum_{k=0}^d \norm{[z_k]_k}_{L_1(\A)}}.$$
\end{prop}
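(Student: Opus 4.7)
The strategy mirrors the proof of Proposition~\ref{infat}: take a minimizing sequence, bound it uniformly via a truncation argument, then extract a convergent subsequence by combining Hilbert space weak compactness with Mazur's lemma.

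Let $(z_0^{(m)},\dots,z_d^{(m)})$ be a sequence of decompositions of $x$ with $\sum_k\|[z_k^{(m)}]_k\|_{L_1(\A)}\to I$, the infimum. The first (and most delicate) step is to establish an analogue of Lemma~\ref{L2borne}: up to modifying the sequence we may assume that each $[z_k^{(m)}]_k$ is uniformly bounded in $L_2(\A)$ by a constant depending only on $\|x\|_\infty$, $n$, $d$, and the initial objective. For this, take the polar decomposition $[z_k^{(m)}]_k=u_k^{(m)}\alpha_k^{(m)}$ with $\alpha_k^{(m)}\in(\M\otimes\bM_n^{\otimes k})^+$ and a spectral projection $e_k^{(m)}=1_{[0,M]}(\alpha_k^{(m)})$ for an appropriate threshold $M$. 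Right multiplication by $e_k^{(m)}$, identified with an element of $\M\otimes e_{1,1}^{\otimes d-k}\otimes\bM_n^{\otimes k}$, preserves $\A_k$ (since $c_i\cdot e_{1,1}=c_i$), producing a truncated element bounded in $L_\infty(\A)$ by $M$. A Markov-type estimate $M\tau(1-e_k^{(m)})\le\tau(\alpha_k^{(m)}(1-e_k^{(m)}))\le\|[z_k^{(m)}]_k\|_1$ controls the $L_1$-mass of the loss. Absorbing the $d+1$ losses into a single term (say $y_0^{(m)}$) and translating the matrix arrangement via Lemma~\ref{lem:alg}, one chooses $M$ so that the new decomposition has objective exceeding the old by at most $\epsilon_m\to 0$ while keeping all components $L_2(\A)$-bounded.

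With uniform $L_2(\A)$-bounds secured, the sequence lies in a weakly compact subset of the Hilbert space $L_2(\M)^{n^d(d+1)}$. Extract a weakly convergent subsequence and pass to convex combinations by Mazur's lemma to obtain strong $L_2$-convergence $y_k^{(m)}\to y_k$. Convex combinations do not increase the objective, and $L_2$-convergence in the finite algebra $\A$ implies $L_1$-convergence of each $[y_k^{(m)}]_k\to[y_k]_k$, so the limit satisfies $\sum_ky_k=x$ and $\sum_k\|[y_k]_k\|_{L_1(\A)}=I$. The uniform $\M$-bound coming from truncation passes to the limit by weak-$*$ compactness, giving $y_k\in F(W_d^+,\M)$ as required.

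The main obstacle is the truncation step. Unlike Lemma~\ref{L2borne}, which uses only two spectral projections on opposite sides and absorbs losses into the column and row terms via the ambient $x$, here there are $d+1$ terms of genuinely different matrix arrangements $\A_k$, so the loss produced by truncating in $\A_k$ must be re-expressed in another arrangement before being absorbed. Controlling both the $L_1$ objective and the $L_2$ bound during this redistribution—essentially the combinatorial content of Lemma~\ref{lem:alg}—is the technical heart of the argument, whereas the remaining Hilbert space extraction is completely standard.
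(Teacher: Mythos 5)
Your overall plan (take a minimizing sequence, establish a uniform $L_2$ bound, extract by weak compactness and Mazur) is exactly the paper's, and the compactness half is fine. But the truncation step, which you rightly single out as the crux, does not go through as you propose. You truncate each $[z_k^{(m)}]_k$ by a spectral projection $e_k^{(m)}=1_{[0,M]}(\alpha_k^{(m)})$, keep the bounded parts $[z_k^{(m)}]_k e_k^{(m)}$, and absorb the $d+1$ losses into a single slot $y_0^{(m)}$ after re-arranging via Lemma~\ref{lem:alg}. This fails for two reasons. First, the Markov estimate controls $\tau(1-e_k^{(m)})$ but not the $L_2$ norm of the loss $[z_k^{(m)}]_k(1-e_k^{(m)})$ (the $\alpha_k^{(m)}$ are only bounded in $L_1$, with no equi-integrability), so the absorbed $y_0^{(m)}$ has no uniform $L_2$ bound, defeating the purpose of the step. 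Second, and more fundamentally, the norms $\|[z]_k\|_{L_1(\A)}$ for different $k$ are not comparable with $n$-independent constants --- already for $d=1$ they are the column and row $L_1$ norms of a sequence, which can differ by a factor $\sqrt n$. So re-expressing the loss from $\A_k$ in $\A_0$ and asserting that the new $\|[y_0^{(m)}]_0\|_1$ exceeds the old objective by at most $\varepsilon_m\to 0$ is not justified (and you would also need $M=M_m\to\infty$ for $\varepsilon_m\to 0$, which destroys the uniform $L_\infty$ bound).

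The paper's lemma avoids both obstacles by an induction on $j$ that never moves mass between arrangements. At step $j$, with $w_0,\dots,w_{j-1}$ already uniformly bounded in $L_\infty$, one sets $a=x-w_0-\dots-w_{j-1}$ so that $A:=\|[a]_j\|_\infty\lesssim\|[x]_0\|_\infty$ by the equivalence (with constants depending only on $n,d$) of the norms $\|[\cdot]_k\|_\infty$. Taking $e=1_{[0,A]}(|[w_j]_j|)$, one replaces $[w_j]_j$ by $[w_j]_je+[a]_j(1-e)$ and right-multiplies each $[w_k]_k$, $k>j$, by $1_{\bM_n}^{\tens k-j}\tens e$. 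The truncated mass is compensated inside arrangement $j$ by the \emph{bounded} element $[a]_j(1-e)$; the sum is preserved exactly by Lemma~\ref{lem:alg}; the $j$-th $L_1$ norm does not increase because $(1-e)|[w_j]_j|\geq A(1-e)$; and the higher $L_1$ norms only decrease since they are hit by a projection. This induction is the missing idea: a one-shot absorption of all losses into a single slot cannot deliver simultaneous $L_\infty$ and $L_1$ control.
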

As usual, this proposition can be easily obtained once the following
lemma is known.
\begin{lemma}
Let $x\in F(W_d^+,\M)$. Let $y_0,\dots,y_d \in F(W_d^+,\M)$ such that
$y_0+\dots+y_d = x$. Then, there exist $z_0,\dots,z_d \in F(W_d^+,\M)$
such that $z_0+\dots+z_d = x$ and for all $0\leq k \leq d$,
$\norm{[z_k]_k}_{\infty} \leq C_{d,n} \norm{[x]_0}_{\infty}$ and
$\norm{[z_k]_k}_1 \leq \norm{[y_k]_k}_1$ for some constant $C_{d,n}$.
\end{lemma}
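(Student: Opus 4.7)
The plan is to imitate the three-term truncation in Lemma \ref{L2borne} by introducing two spectral projections for each of the $d+1$ matrix levels, then redistributing the resulting residues so that $\sum_k z_k = x$. Set $M := \norm{[x]_0}_\infty$. For each $k \in \set{0,\dots,d}$, the polar decomposition of $[y_k]_k \in \A_k$ yields the right modulus $|[y_k]_k| \in \M\tens\bM_n^{\tens k}$ and the left modulus $|[y_k]_k^*| \in \M\tens\bM_n^{\tens d-k}$; define
\[
e_k := 1_{[0,M]}(|[y_k]_k|), \qquad f_k := 1_{[0,M]}(|[y_k]_k^*|).
\]
The truncated matrix $f_k [y_k]_k e_k$ has operator norm at most $M$. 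By Lemma \ref{lem:alg}(iii)--(iv), multiplying $[y_k]_k$ by projections of this type corresponds to modifying $y_k$ via element-wise multiplication of appropriate components, so the result is the $k$-matrix of a well-defined word function in $F(W_d^+,\M)$.

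The candidate for $[z_k]_k$ is the three-term expression
\[
[z_k]_k := f_k [y_k]_k e_k + f_k [x]_k (1-e_k) + (1-f_k) [x]_k e_k,
\]
mirroring $f\alpha e + x(1-e) + (1-f)xe$ from Lemma \ref{L2borne}. Each summand lies in $\A_k$ and, by Lemma \ref{lem:alg}, corresponds to a $z_k \in F(W_d^+,\M)$. The first summand has operator norm at most $M$ by construction; the last two are bounded by $\norm{[x]_k}_\infty$, which via the Buchholz--Haagerup inequality is at most $\norm{G(x)}_\infty$ and, up to combinatorial factors absorbed into $C_{d,n}$, by $M$. The $L_1$ bound $\norm{[z_k]_k}_1 \le \norm{[y_k]_k}_1$ is verified as in the proof of Lemma \ref{L2borne}: the spectral inequalities $(1-e_k)|[y_k]_k| \ge M(1-e_k)$ and $(1-f_k)|[y_k]_k^*| \ge M(1-f_k)$ allow the $L_1$ mass of the correction terms to be absorbed into $\norm{[y_k]_k}_1$.

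The main obstacle is the telescoping identity $z_0 + \cdots + z_d = x$. In Lemma \ref{L2borne} this follows from the three-term identity $fxe + x(1-e) + (1-f)xe = x$, but here, with $d+1$ terms and $2(d+1)$ projections, the naive sum $\sum_k [z_k]_k$ does not directly reduce to $[x]_k$ for any single $k$. The construction must therefore be carried out iteratively: process $k=0,1,\dots,d$ in order and, at each step, define $[z_k]_k$ using the current residue $r^{(k)} := x - z_0 - \cdots - z_{k-1}$ in place of $x$ in the correction formula, then push any leftover part of $y_k$ forward by expressing it as $[\cdot]_{k+1}$ via Lemma \ref{lem:alg}(iv). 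Maintaining uniform control of both the $L_\infty$ and the $L_1$ norms at each iteration---so that the final constant depends only on $d$ and $n$---is the technical heart of the argument and is what distinguishes the case $d \ge 2$ from the length-one case handled by Lemma \ref{L2borne}.
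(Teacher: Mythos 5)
Your opening formula $[z_k]_k := f_k [y_k]_k e_k + f_k [x]_k (1-e_k) + (1-f_k)[x]_k e_k$, with two spectral projections per level, does not give $\sum_k z_k = x$, as you yourself note, and your subsequent iterative repair is only a sketch: you defer the direction of the push, the simultaneous preservation of the $L_1$ and $L_\infty$ bounds, and the resulting telescoping as ``the technical heart,'' which is exactly the gap. The direction you indicate is also wrong: pushing the leftover of $y_k$ forward to level $k+1$ would dump extra mass onto $z_{k+1}$ and spoil the required bound $\|[z_{k+1}]_{k+1}\|_1 \leq \|[y_{k+1}]_{k+1}\|_1$.

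The paper instead argues by induction on $j\in\{0,\dots,d+1\}$, with invariant that $w_0,\dots,w_d$ sum to $x$, each $\|[w_k]_k\|_1\le\|[y_k]_k\|_1$, and $\|[w_k]_k\|_\infty\lesssim\|[x]_0\|_\infty$ for $k<j$. At step $j$, with residue $a := x - w_0 - \cdots - w_{j-1}$ and $A := \|[a]_j\|_\infty \lesssim \|[x]_0\|_\infty$ (using the equivalence of the norms $\|[\cdot]_k\|_\infty$), only \emph{one} projection $e := 1_{[0,A]}(|[w_j]_j|)$ is introduced, and one sets $[z_j]_j = [w_j]_j e + [a]_j(1-e)$, $[z_k]_k = [w_k]_k(1_{\bM_n}^{\tens k-j}\tens e)$ for $k > j$, and $z_k = w_k$ for $k < j$. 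The push is \emph{backward}: the $(1-e)$-parts of all later $w_k$'s are absorbed into $z_j$, since $[z_j]_j = [w_j]_j + [a-w_j]_j(1-e)$ while $\sum_{k>j}[z_k]_j = [a-w_j]_j e$ by Lemma~\ref{lem:alg}, so the sum telescopes to $[x]_j$. The $L_1$ bound at level $j$ holds because $A\tau(1-e) \leq \tau\big((1-e)|[w_j]_j|\big)$, right multiplication by $1\tens e$ only decreases $\|[w_k]_k\|_1$ for $k>j$, and $\|[z_j]_j\|_\infty\leq A$ since its two summands have orthogonal right supports. Two projections per level are unnecessary precisely because the $d+1$ matrix norms are handled one at a time in the induction; your attempt to mimic the two-projection structure of Lemma~\ref{L2borne} directly is what creates the telescoping obstruction you ran into.
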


\begin{proof}
Note that the norms on $F(W_d^+,\M)$ given by
$\norm{[\cdot]_k}_{\infty}$ for $0\leq k\leq d$ are all
equivalent. We argue by induction, showing that for every $j\in
\set{0,\dots,d+1}$, we can find $z_0,\dots,z_d$ such that for every
$k\in \set{0,\dots,d}$, $\norm{[z_k]_k}_1 \leq \norm{[y_k]_k}_1$ and
for every $k<j$, $\norm{[z_k]_k}_{\infty} \lesssim
\norm{[x]_0}_{\infty}$. Assume that the statement is known for a fixed
$j\in \set{0,\dots,d}$ and let $w_0,\dots,w_d \in F(W_d^+,\M)$
verifying its conditions. Let $a = x - w_0 - \dots - w_{j-1}$. By the
equivalence of norms mentioned above and by our induction hypothesis,
$A := \norm{[a]_j}_{\infty} \lesssim \norm{[x]_0}_{\infty}$. Let $e =
1_{[0,A]}(\md{[w_j]_j})$ in $\M \tens \bM_n^{\tens j}$. Define
$z_0,\dots,z_d$ by:
$$ [z_k]_k =
    \begin{cases}
      [w_k]_k, & \text{if}\ k<j, \\ [w_j]_je + [a]_j(1_{\M \tens
        \bM_n^{\tens j}}-e) & \text{if}\ k=j, \\ [w_k]_k\p{
        1_{\bM_n}^{\tens k-j}\tens e } &\text{if}\ k>j.
    \end{cases}
$$ Using lemma \ref{lem:alg}, it is clear that $z_0+\dots+z_d =
    x$. The other conditions are verified in a similar way as for
    lemma \ref{L2borne}.
\end{proof}

An optimal decomposition verifies the algebraic properties expressed
in the following proposition.

\begin{prop}
Let $x\in F(W_d^+,\M)$ and $y_0,\dots,y_d$ be  an optimal decomposition of
$x$. There exists $u\in F(W_d^+,\M)$ and $\gamma_k\in \p{\M\tens
  \bM_n^{\tens k}}^+$ for all $k=0,...,d$ such that for all $0\leq
k\leq d$,
\begin{center}
$s(\gamma_k) \leq [u]_k^*[u]_k \leq 1$ in $\M\tens\bM_n^{\tens k}$ and
  $[y_k]_k = [u]_k\gamma_k$.
\end{center}
\end{prop}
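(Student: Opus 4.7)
The plan is to follow the scheme of Proposition \ref{decl1} in the Buchholz--Haagerup setting, using the algebraic identities of Lemma \ref{lem:alg} to bridge between the different levels $k$. First, I would equip $F(W_d^+,\M)$ with the sum norm
$$\norm{x}_{1,\Sigma}=\inf_{x=z_0+\dots+z_d}\sum_{k=0}^d\norm{[z_k]_k}_{L_1(\A)}$$
and pair it with $u\in F(W_d^+,\M)$ via $\tau_\N(G(u)^*G(x))$. By Lemma \ref{lem:alg}(i) this pairing equals $\sum_k\tau_\A([u]_k^*[z_k]_k)$ for any decomposition of $x$, so the dual norm is bounded above by $\max_k\norm{[u]_k}_\infty$, and the reverse inequality follows by testing against elements $x$ concentrated on a single level. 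The Buchholz--Haagerup inequality together with Remark \ref{rem:L1} show that $\norm{\cdot}_{1,\Sigma}$ and $\norm{G(\cdot)}_1$ (respectively $\max_k\norm{[u]_k}_\infty$ and $\norm{G(u)}_\infty$) are equivalent, so the pairing is non-degenerate and standard duality applies.

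Next, applying Hahn--Banach (after restricting to a finite-dimensional subspace of $\M$ spanned by the coefficients of $x$ and of the optimal $y_k$ from Proposition \ref{prop:exopt}), I would produce $u\in F(W_d^+,\M)$ with $\max_k\norm{[u]_k}_\infty\leq 1$ satisfying
$$\sum_{k=0}^d\tau_\A([u]_k^*[y_k]_k)=\norm{x}_{1,\Sigma}=\sum_{k=0}^d\norm{[y_k]_k}_{L_1(\A)}.$$
Each summand is bounded by $\norm{[y_k]_k}_1$, so equality in the sum forces equality in every term. Now polar-decompose $[y_k]_k=v_k\gamma_k$ inside $\A$: the positive part $\gamma_k=([y_k]_k^*[y_k]_k)^{1/2}$ lives in the corner $P_R\A P_R\cong \M\tens\bM_n^{\tens k}$ with $P_R=1_\M\tens e_{1,1}^{\tens d-k}\tens 1_{\bM_n}^{\tens k}$, and $v_k\in\A_k$ because $\A_k=P_L\A P_R$ is a two-sided corner (with the analogous $P_L=1_\M\tens 1_{\bM_n}^{\tens d-k}\tens e_{1,1}^{\tens k}$) and polar decomposition respects corners. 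The Cauchy--Schwarz argument of Proposition \ref{decl1} applied to $\tau_\A([u]_k^*v_k\gamma_k)=\norm{\gamma_k}_1$, using that $[u]_k$ and $v_k$ are contractions with $v_k^*v_k=s(\gamma_k)$, then yields $[u]_ks(\gamma_k)=v_ks(\gamma_k)$ together with $s(\gamma_k)\leq [u]_k^*[u]_k$. Hence $[u]_k\gamma_k=[u]_ks(\gamma_k)\gamma_k=v_ks(\gamma_k)\gamma_k=v_k\gamma_k=[y_k]_k$, which is the desired factorization; the bound $[u]_k^*[u]_k\leq 1$ in $\M\tens\bM_n^{\tens k}$ is just the inequality $[u]_k^*[u]_k\leq P_R$ in $\A$, since $P_R$ is the identity of the corner.

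The main obstacle is the duality step: one must obtain a \emph{single} element $u\in F(W_d^+,\M)$ whose images $[u]_k$ control all $d+1$ levels simultaneously, rather than a separate optimizer at each level. This is exactly what the sum-versus-sup structure of the dual of $\norm{\cdot}_{1,\Sigma}$ provides, which is why the $\max_k$ on the dual side is essential --- a single Hahn--Banach functional in the form $G(u)$ does the work for every $k$ at once. The identities of Lemma \ref{lem:alg}(iii)--(iv) are used implicitly throughout to make sense of the products $[u]_k\gamma_k\in\A_k$ and to keep the corners $\A_k$ compatible as $k$ varies.
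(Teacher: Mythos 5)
Your proposal is correct and fills in exactly the argument the paper only sketches: pair $F(W_d^+,\M)$ with itself via $\tau_\N(G(u)^*G(x))$, identify the dual of $\|\cdot\|_{1,\Sigma}$ as $\max_k\|[u]_k\|_\infty$, extract a norming $u$, and then run the equality-in-Cauchy--Schwarz argument of Proposition \ref{decl1} level by level to get $[u]_k s(\gamma_k)=v_ks(\gamma_k)$ and $s(\gamma_k)\leq [u]_k^*[u]_k$. The corner computation showing $v_k\in\A_k$ and $\gamma_k\in P_R\A P_R\cong\M\tens\bM_n^{\tens k}$, and the observation that $P_R$ is the unit of the corner (so $[u]_k^*[u]_k\leq 1$ there), are both right.

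The only cosmetic weak spot is the production of the norming element $u$: the detour through ``Hahn--Banach after restricting to a finite-dimensional subspace of $\M$'' is unnecessary and, as phrased, a little slippery (the $y_k$ live in $L_1(\M)$, not $\M$, and a representative of a Hahn--Banach extension isn't automatically in the right predual picture). The cleaner route is to note that $(L_1(\M)^{W_d^+},\|\cdot\|_{1,\Sigma})^*$ is $\M^{W_d^+}$ with $\max_k\|[\cdot]_k\|_\infty$ via the trace pairing (using $\M=L_1(\M)^*$ coordinatewise), and the unit ball of the latter is weak-$*$ compact, so the supremum defining $\|x\|_{1,\Sigma}$ is attained at some $u$ with $\max_k\|[u]_k\|_\infty\leq 1$. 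With that substitution the argument is complete and in the same spirit as the paper's (unwritten) proof.
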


\begin{proof}
We do not give a detailed proof of this proposition since it is very
similar to proposition \ref{infat} of this paper or Theorem 4.2. in
\cite{C}. The idea is to consider an optimal decomposition $x = y_0 +
\dots + y_d$ given by proposition \ref{prop:exopt} and then, to write
the polar decomposition $[y_k]_k = v_k\gamma_k$ and argue by
duality. Note that for the duality argument, it is convenient to come
back to the more standard point of view and identify $[y_k]_k \in \A_k$
with $y_k^{[k]} \in \bM_{n^{k},n^{d-k}}$.\end{proof}

\begin{rk}{\rm  Remark also that above  we consider
the right modulus of each $y_k$ even though this is not the natural
thing to do for $y_d$ which is a row. The reason is that it will make
the writing of the following proofs smoother.  }
\end{rk}

\begin{thm}\label{thm:lengthd}
Let $x\in F(W_d^+,\M)$ and $y_0,\dots,y_d$ an optimal decomposition of
$x$. Let $E$ be a fully symmetric function space. Then for all $0\leq
k\leq d$:
$$\frac 1{d+1} \norm{[y_k]_k}_{E(\A)} \leq \norm{G(x)}_{E(\N)} \leq (d+1)\sum_{k=0}^d
\norm{[y_k]}_{E(\A)}.$$
\end{thm}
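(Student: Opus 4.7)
The plan is to follow the three-step scheme of Theorem~\ref{mainineq}, with Buchholz-Haagerup replacing Voiculescu and the algebraic decomposition $[y_k]_k = [u]_k\gamma_k$ from the preceding proposition playing the role of the decomposition in Proposition~\ref{decl1}. A preliminary observation: since $s(\gamma_k) \leq [u]_k^*[u]_k \leq 1$, the argument at the end of Lemma~\ref{polar} yields $[u]_k^*[u]_k\gamma_k = \gamma_k$, so $|[y_k]_k|^2 = \gamma_k^*[u]_k^*[u]_k\gamma_k = \gamma_k^2$, i.e., $|[y_k]_k| = \gamma_k$ under the trace-preserving embedding $\M\tens\bM_n^{\tens k} \hookrightarrow \A$. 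Hence $\|[y_k]_k\|_{E(\A)} = \|\gamma_k\|_{E(\M\tens\bM_n^{\tens k})}$.

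For the upper bound, by the triangle inequality it suffices to show $\|G(y_k)\|_{E(\N)} \leq (d+1)\|[y_k]_k\|_{E(\A)}$. Define $\Psi_k \colon \M\tens\bM_n^{\tens k} \to \N$ by $\Psi_k(\xi) = G(a)$, where $a \in F(W_d^+,\M)$ is the unique function with $[a]_k = [u]_k\xi$; then $\Psi_k(\gamma_k) = G(y_k)$. By interpolation it is enough to establish $\|\Psi_k\|_{L_\infty \to L_\infty} \leq d+1$ and $\|\Psi_k\|_{L_1 \to L_1} \leq 1$. For $L_\infty$, apply Buchholz-Haagerup and estimate each $\|[a]_{k'}\|_\infty$ uniformly: for $k' \geq k$, Lemma~\ref{lem:alg}(iv) gives $[a]_{k'} = [u]_{k'}(1^{\tens k'-k}\tens \xi)$ and $\|[u]_{k'}\|_\infty \leq 1$; for $k' < k$, Lemma~\ref{lem:alg}(ii) gives $[a]_{k'}^*[a]_{k'} = tr_{k'}(\xi^*[u]_k^*[u]_k\xi) \leq tr_{k'}(\xi^*\xi)$, and $tr_{k'}$ is unital, hence $L_\infty$-contractive. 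For $L_1$, Remark~\ref{rem:L1} yields $\|G(a)\|_1 \leq \|[a]_k\|_1 = \|[u]_k\xi\|_1 \leq \|\xi\|_1$.

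For the lower bound, the core $L_1$ argument pairs with $G(u)$. By Lemma~\ref{lem:alg}(i),
\begin{equation*}
  \tau_\N(G(u)^*G(x)) = \tau_\A([u]_k^*[x]_k) = \sum_{k'=0}^d \tau_\A([u]_k^*[y_{k'}]_k).
\end{equation*}
The summand at $k'=k$ equals $\tau_\A(\gamma_k) = \|[y_k]_k\|_1$. For $k' \neq k$, Lemma~\ref{lem:alg}(ii) (on the appropriate side) together with the analogue $[u]_{k'}^*[u]_{k'}\gamma_{k'} = \gamma_{k'}$ reduces each cross summand to $\tau_\A(\gamma_{k'}) \geq 0$ by trace-preservation of the relevant partial trace. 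Combined with Buchholz-Haagerup $\|G(u)\|_\infty \leq \sum_{k'}\|[u]_{k'}\|_\infty \leq d+1$ this gives $\|[y_k]_k\|_1 \leq (d+1)\|G(x)\|_1$. For general $E$, by full symmetry it suffices to prove the inequality in $L_1 + tL_\infty$ for every $t>0$. Using duality and $|[y_k]_k| = \gamma_k$, choose positive $p \in \M\tens\bM_n^{\tens k}$ with $p \leq s(\gamma_k)$, $\|p\|_\infty \leq 1$, $\|p\|_1 \leq t$ and $\tau_\A(p\gamma_k)$ realizing $\|[y_k]_k\|_{L_1+tL_\infty}$. Setting $Y := G(a)$ with $[a]_k = [u]_k p$, the upper-bound argument applied to $p$ in place of $\gamma_k$ gives $\|Y\|_\infty \leq d+1$ and $\|Y\|_1 \leq t$, so $Y$ has dual norm $\leq d+1$. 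The expansion
\begin{equation*}
  \tau_\N(Y^*G(x)) = \tau_\A(p\gamma_k) + \sum_{k'\neq k}\tau_\A(p[u]_k^*[y_{k'}]_k)
\end{equation*}
then yields $\|[y_k]_k\|_{L_1+tL_\infty} \leq (d+1)\|G(x)\|_{L_1+tL_\infty}$, provided the cross terms are non-negative.

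The hard part will be verifying this non-negativity for $k' < k$. For $k' > k$ it is direct: Lemma~\ref{lem:alg}(ii) gives $[u]_k^*[y_{k'}]_k = tr_k(\gamma_{k'})$, a positive element of $\M\tens\bM_n^{\tens k}$, and the trace against $p \geq 0$ is non-negative. For $k' < k$ only the partial trace $tr_{k'}([u]_k^*[y_{k'}]_k) = \gamma_{k'}$ is known to be positive, while $[u]_k^*[y_{k'}]_k$ itself need not be. Showing $\tau_\A(p[u]_k^*[y_{k'}]_k) \geq 0$ will require exploiting the support condition $p \leq s(\gamma_k)$ in conjunction with the compatibility properties of the single $u$ that simultaneously realizes the polar decomposition of all $[y_{k'}]_{k'}$; this combinatorial/algebraic verification inside $\A$ is the main technical obstacle.
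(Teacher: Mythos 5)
Your proposal follows the same three-step scheme as the paper (identify $|[y_k]_k|=\gamma_k$, prove an upper bound term-by-term via Buchholz--Haagerup, then dualize in $L_1+tL_\infty$ for the lower bound), and the upper-bound part is correct and essentially coincides with the paper's Lemma~\ref{lem:G<A}. However, you explicitly leave a gap in the lower bound: the non-negativity of the cross terms $\tau_\A\big(p\,[u]_k^*[y_{k'}]_k\big)$ for $k'<k$. This is a genuine hole in the write-up, but it closes with the very lemma you already invoke elsewhere.

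The missing step is Lemma~\ref{lem:alg}(iv) applied to $[y_{k'}]_k$ rather than trying to reach it through the partial trace of Lemma~\ref{lem:alg}(ii). Since $[y_{k'}]_{k'}=[u]_{k'}\gamma_{k'}$ and $k'<k$, item (iv) with $a=u$ and $\alpha=\gamma_{k'}$ gives the explicit identity
$$[y_{k'}]_k \;=\; \big[[u]_{k'}\gamma_{k'}\big]_k \;=\; [u]_k\,\big(1_{\bM_n}^{\tens\, k-k'}\tens\gamma_{k'}\big).$$
Consequently $p[u]_k^*[y_{k'}]_k = p[u]_k^*[u]_k\,(1^{\tens k-k'}\tens\gamma_{k'})$. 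Since $p\leq s(\gamma_k)\leq [u]_k^*[u]_k\leq 1$ one has $p[u]_k^*[u]_k=p$ (indeed $p(1-[u]_k^*[u]_k)p\leq p(1-p)p=0$), so by cyclicity of the trace
$$\tau_\A\big(p[u]_k^*[y_{k'}]_k\big)=\tau_\A\big((1^{\tens k-k'}\tens\gamma_{k'})\,p\,[u]_k^*[u]_k\big)=\tau_\A\big((1^{\tens k-k'}\tens\gamma_{k'})\,p\big)\geq 0,$$
a pairing of two positive elements. This is exactly how the paper disposes of the $j\leq k$ terms: it rewrites $[y_j]_k$ via item (iv), absorbs $[u]_k^*[u]_k$ into the test element $e$, and is left with $\tau_\A\big(e(1^{\tens k-j}\tens\gamma_j)\big)\geq 0$. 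In short, the combinatorics you anticipated is not needed; the single identity (iv) converts the $k'<k$ cross terms into the same form as the $k'=k$ diagonal term, and the support condition $p\leq s(\gamma_k)$ does the rest.
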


\begin{lemma}\label{lem:G<A}
Let $0\leq k \leq d$, $u\in F(W_d^+,\M)$ and $\gamma \in (\M\tens
\bM_n^{\tens k})^+$. Suppose that for every $0\leq k'\leq d$,
$\norm{[u]_{k'}}_{\infty}\leq 1$ and $[u]_k^*[u]_k\gamma =
\gamma$. Then, for any fully symmetric space $E$,
$$\norm{G([u]_k\gamma)}_{E(\N)} \leq (d+1) \norm{\gamma}_{E(\A)}.$$
\end{lemma}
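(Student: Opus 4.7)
The plan is to interpret $\delta \mapsto G([u]_k\delta)$ as a linear map $T$ and prove that it is bounded by $1$ on $L_1$ and by $d+1$ on $L_\infty$. Since $E$ is a fully symmetric function space, i.e.\ an interpolation space for the couple $(L_1,L_\infty)$, and since the embedding $\M \tens \bM_n^{\tens k}\hookrightarrow \A$ is trace preserving (so the $E$-norms computed on either side agree), specializing at $\delta=\gamma$ will then give the conclusion.

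For the $L_\infty$ bound I would start from the Buchholz-Haagerup inequality,
$$\norm{G([u]_k\delta)}_\infty \leq \sum_{k'=0}^d \norm{\obox{[u]_k\delta}_{k'}}_\infty,$$
and show that each summand is at most $\norm{\delta}_\infty$. For $k' \geq k$, Lemma \ref{lem:alg}(iv) identifies $\obox{[u]_k\delta}_{k'}$ with $[u]_{k'}\p{1_{\bM_n}^{\tens k'-k}\tens \delta}$, whose operator norm is bounded by $\norm{[u]_{k'}}_\infty\norm{\delta}_\infty\leq \norm{\delta}_\infty$. The more delicate range $k' < k$ is the main technical step: applying Lemma \ref{lem:alg}(ii) with $k'$ as the smaller index and $k$ as the larger one yields
$$\obox{[u]_k\delta}_{k'}^*\obox{[u]_k\delta}_{k'} = tr_{k'}\p{\delta^*[u]_k^*[u]_k\delta}.$$
Since $[u]_k^*[u]_k\leq 1$ and $tr_{k'}$ is a unital completely positive map, hence a contraction on $L_\infty$, this quantity has norm at most $\norm{\delta}_\infty^2$. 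For the specific $\gamma$ in the statement the support condition $[u]_k^*[u]_k\gamma=\gamma$ in fact yields the equality $tr_{k'}(\gamma^2)$, but only the inequality is needed for the uniform $L_\infty$ bound on $T$. Summing over the $d+1$ indices gives $\norm{T(\delta)}_\infty\leq (d+1)\norm{\delta}_\infty$.

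The $L_1$ estimate is immediate from Remark \ref{rem:L1}, applied at the single index $k'=k$:
$$\norm{G([u]_k\delta)}_1 \leq \norm{\obox{[u]_k\delta}_k}_1 = \norm{[u]_k\delta}_1 \leq \norm{[u]_k}_\infty\norm{\delta}_1 \leq \norm{\delta}_1.$$
With both endpoint bounds in hand, interpolation for fully symmetric spaces yields $\norm{T}_{E(\M\tens\bM_n^{\tens k})\to E(\N)}\leq d+1$, and evaluating at $\delta=\gamma$ finishes the proof. The main obstacle is the $k'<k$ case, which requires the less immediate use of Lemma \ref{lem:alg}(ii) together with careful handling of the partial traces $tr_{k'}$; the trace preserving embeddings of the various $\M\tens\bM_n^{\tens j}$ into $\A$ also demand some bookkeeping but are ultimately routine.
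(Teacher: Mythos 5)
Your argument for the case $k'<k$ has a genuine gap. You claim that $tr_{k'}$ is a unital completely positive map and hence a contraction on $L_\infty$, but this is not the case. For the identification $\M\tens\bM_n^{\tens k}\cong\M\tens e_{1,1}^{\tens d-k}\tens\bM_n^{\tens k}\subset\A$ to be trace preserving, as the paper requires, $tr$ must be the non-normalized trace on each $\bM_n$, and then $tr_{k'}(1_\A)=n^{d-k'}\cdot 1$. More to the point, the corner projection $q=1_\M\tens e_{1,1}^{\tens d-k}\tens 1^{\tens k}$ carrying $\delta^*\delta$ satisfies $tr_{k'}(q)=n^{k-k'}\cdot 1$, so the estimate $\delta^*[u]_k^*[u]_k\delta\leq\norm{\delta}_\infty^2\, q$ only yields $\norm{tr_{k'}(\delta^*[u]_k^*[u]_k\delta)}_\infty\leq n^{k-k'}\norm{\delta}_\infty^2$, which is of no use. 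In particular the map $T:\delta\mapsto G([u]_k\delta)$ is simply not bounded by $d+1$ on $L_\infty$, so the interpolation strategy you propose cannot be run.

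This is precisely why the support condition $[u]_k^*[u]_k\gamma=\gamma$ is not, as you suggest, dispensable in the $L_\infty$ step. The paper's proof uses it to sandwich $\gamma$ between copies of $[u]_k^*[u]_k$: one rewrites $tr_{k'}(\gamma[u]_k^*[u]_k\gamma)=tr_{k'}\big([u]_k^*([u]_k\gamma[u]_k^*[u]_k\gamma[u]_k^*)[u]_k\big)\leq\norm{\gamma}_\infty^2\,tr_{k'}([u]_k^*[u]_k)$ and then invokes $tr_{k'}([u]_k^*[u]_k)=[u]_{k'}^*[u]_{k'}\leq 1$, an identity that relies on the hypothesis $\norm{[u]_{k'}}_\infty\leq 1$ for \emph{every} $k'$, not just $k'=k$. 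The reduction to the endpoints then proceeds not by interpolating a globally defined map $T$ but by decomposing $\gamma$ in $L_1+tL_\infty$ into nonnegative pieces $0\leq\gamma_1,\gamma_\infty\leq\gamma$; since $s(\gamma_i)\leq s(\gamma)$, each piece inherits the support condition and the two endpoint estimates apply to each separately. Your treatment of the range $k'\geq k$ via Lemma \ref{lem:alg}(iv) and of the $L_1$ endpoint via Remark \ref{rem:L1} is correct and matches the paper.
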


\begin{proof}
Since $E$ is fully symmetric, it suffices to prove the lemma for $E$
of the form $L_1 + tL_{\infty}$, $t>0$. Then, by fixing a
decomposition of $\gamma$ into two elements which can be taken to be
nonnegative, we are reduced to treating the cases of $L_1$ and
$L_{\infty}$. By Remark \ref{rem:L1}, the case of $L_1$ is
straightforward. To treat the case of $L_{\infty}$, we use the
Buchholz-Haagerup inequality. We need to prove that, for all $0\leq k'
\leq d$,
$$\norm{[[u]_k\gamma]_{k'}}_{\infty} \leq \norm{\gamma}_{\infty}.$$ Do
to this, we use lemma \ref{lem:alg}. If $k'\geq k$, we use {\it
  (\ref{iv})}:
$$\norm{[[u]_k\gamma]_{k'}}_{\infty} = \norm{[u]_{k'}(1_{\bM_n}^{\tens k'-k}
  \tens\gamma)}_{\infty} \leq \norm{[u]_{k'}}_{\infty}\norm{
  \gamma}_{\infty} \leq \norm{\gamma}_{\infty}.$$ If $k'<k$, recall
that $[u]_k^*[u]_k\gamma = \gamma$:
$$[[u]_k\gamma]_{k'}^*[[u]_k\gamma]_{k'} =
tr_{k'}(\gamma[u]_k^*[u]_k\gamma) =
tr_{k'}([u]_k^*[u]_k\gamma[u]_k^*[u]_k\gamma[u]_k^*[u]_k).$$ Now note
that $\norm{[u]_k\gamma[u]_k^*[u]_k\gamma[u]_k^*}_{\infty} \leq
\norm{\gamma^2}_{\infty}$. Hence:
$$[[u]_k\gamma]_{k'}^*[[u]_k\gamma]_{k'} \leq
\norm{\gamma^2}_{\infty}tr_{k'}([u]_k[u]_k^*) =
\norm{\gamma^2}_{\infty}[u]_{k'}^*[u]_{k'} \leq
\norm{\gamma^2}_{\infty}.$$
\end{proof}

\begin{proof}[Proof of theorem \ref{thm:lengthd}]
The second inequality is straightforward by the triangle inequality
and lemma \ref{lem:G<A} so let us focus on the first one. Once again,
it suffices to show it for $E = L_1 + tL_{\infty}$, $t>0$. We argue by
duality. Fix $0\leq k\leq d$. There exists a spectral projection $e$
of $\gamma_k$ in $\M\otimes \bM_n^{\tens k}$ such that $\tau(e) = t$
and $\tau_{\A}(e\gamma_k) = \norm{\gamma_k}_{L_1 + tL_{\infty}}$. By
lemma \ref{lem:G<A}, $\norm{G([u]_ke)}_{L_{\infty}\cap t^{-1}L_1} \leq
d+1$. Let us compute $\tau_{\N}(G([u]_ke)^*G(x))$, relying once again
heavily on lemma \ref{lem:alg} and recalling that $e[u]_k^*[u]_k = e$
and $[u]_j^*[u]_j\gamma_j = \gamma_j$ for any $j\leq d$.
\begin{align*}
\tau_{\N}(G([u]_ke)^*G(x)) &=
\sum_{j=0}^{d}\tau_{\A}(([u]_ke)^*[y_j]_k) \\ &= \sum_{j\leq k}
\tau_{\A}(e[u]^*_k\obox{[u]_{j}\gamma_{j}}_k) + \sum_{j> k}
\tau_{\A}(\obox{ e[u]_k^*}_j [u]_{j}\gamma_{j}) \\ &= \sum_{j\leq k}
\tau_{\A}(e[u]^*_k[u]_{k}(1_{\bM_n}^{\tens k-j}\tens\gamma_{j})) + \sum_{j>
  k} \tau_{\A}((1_{\bM_n}^{\tens j-k}\tens e)[u]^*_{j} [u]_{j}\gamma_{j})
\\ &= \sum_{j\leq k} \tau_{\A}(e(1_{\bM_n}^{\tens k-j}\tens\gamma_{j})) +
\sum_{j> k} \tau_{\A}((1_{\bM_n}^{\tens j-k}\tens e)\gamma_{j}) \\ &\geq
\tau_{\A}(e\gamma_k) = \norm{\gamma_k}_{L_1 + tL_{\infty}}.
\end{align*}
Hence, $(d+1)\norm{Gx}_{L_1 + tL_{\infty}} \geq \norm{\gamma_k}_{L_1 +
  tL_{\infty}}$.
\end{proof}

\begin{cor}
Let $x \in F(W_d^+,\M)$ and $E$ be a fully symmetric space. Then:
$$\dfrac{1}{(d+1)^2}\inf_{y_0+\dots +y_d =
  x}\set{\sum_{k=0}^d\norm{[y_k]_k}_E} \leq \norm{G(x)}_E \leq
(d+1)^2\max_{0\leq k\leq d}\norm{[x]_d}_E.$$ Moreover, if $E$ is an
$(L_1,L_2)$-interpolation space then,
$$\norm{G(x)}_E \leq \inf_{y_0+\dots +y_d =
  x}\set{\sum_{k=0}^d\norm{[y_k]_k}_E},$$ and if $E$ is an
$(L_2,L_{\infty})$-interpolation space then,
$$\max_{0\leq k\leq d}\norm{[x]_d}_E \leq (d+1)\norm{G(x)}_E.$$
\end{cor}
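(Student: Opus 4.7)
The plan is to derive the four inequalities from Theorem \ref{thm:lengthd}, using duality and interpolation as appropriate. For the general left inequality, I would apply Theorem \ref{thm:lengthd} to an optimal decomposition $y_0,\dots,y_d$ of $x$ from Proposition \ref{prop:exopt}: the lower bound there gives $\norm{[y_k]_k}_E \le (d+1)\norm{G(x)}_E$ for each $k$, and summing yields $\inf_{x=\sum z_k}\sum_k \norm{[z_k]_k}_E \le (d+1)^2 \norm{G(x)}_E$. For the general right inequality I would mimic the duality trick from the remark following Corollary \ref{maincor}. Fix $t>0$ and pick $\xi \in L_\infty \cap t^{-1}L_1(\N)$ of unit norm realizing $\norm{G(x)}_{L_1+tL_\infty}$ through the pairing $\tau_\N(\xi^*G(x))$. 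Project $\xi$ onto words of length $d$ to obtain $P_d(\xi)=G(w)$ for some $w\in F(W_d^+,\M)$, noting that by Remark \ref{normproj} and its $L_1$-dual, $P_d$ is bounded by $d+1$ on every fully symmetric space. Apply the already proved left inequality to $w$ in the space $L_\infty\cap t^{-1}L_1$ to produce a decomposition $w=\sum_k w_k$ with controlled $\sum_k \norm{[w_k]_k}_{L_\infty\cap t^{-1}L_1}$, and expand the pairing as $\tau_\N(G(w)^*G(x)) = \sum_k \tau_\A([w_k]_k^*[x]_k)$ via Lemma \ref{lem:alg}(i). K\"othe duality on each term then bounds $\norm{G(x)}_{L_1+tL_\infty}$ by a polynomial-in-$(d+1)$ constant times $\max_k \norm{[x]_k}_{L_1+tL_\infty}$, and the general fully symmetric case follows since such spaces are interpolation spaces for the couples $(L_1,L_\infty)$.

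The two improved inequalities under an interpolation assumption on $E$ are formal consequences of bilinear interpolation. For $(L_1,L_2)$-interpolation $E$, the map $T_k:\A_k \to \N$ sending $[y]_k$ to $G(y)$ is contractive on $L_1$ by Remark \ref{rem:L1} and isometric on $L_2$ by Lemma \ref{lem:alg}(i); interpolation yields contractivity on $E$, so the triangle inequality over any decomposition $x=\sum y_k$ followed by taking the infimum produces $\norm{G(x)}_E \le \inf \sum_k \norm{[y_k]_k}_E$. For $(L_2,L_\infty)$-interpolation $E$, I would consider the composition $S_k:\N \to \A$ that first projects via $P_d$ and then sends $G(y) \mapsto [y]_k$: it is contractive on $L_2$ because $P_d$ is orthogonal and $G(y)\mapsto [y]_k$ is an isometry (Lemma \ref{lem:alg}(i)), while on $L_\infty$ it has norm at most $d+1$ (projection) times $1$ (Buchholz--Haagerup). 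Interpolation then yields $\norm{[x]_k}_E = \norm{S_k(G(x))}_E \le (d+1)\norm{G(x)}_E$.

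The main obstacle will be pinning down the exact polynomial constant in the general right inequality. A naive tracking of the duality step above produces a constant of order $(d+1)^3$: one factor from the projection norm of $P_d$ on $L_\infty\cap t^{-1}L_1$, one from the previously established left inequality applied to $w$, and one from the K\"othe duality bound. To recover the stated $(d+1)^2$ will require either a sharper interpolation exploiting that $P_d$ is an orthogonal projection on $L_2$ (hence of interpolated norm strictly less than $d+1$ on intermediate spaces), or a more direct argument combining the algebraic decomposition from Proposition \ref{prop:exopt} with a variant of Lemma \ref{lem:G<A} applied to $[x]_k$ itself and bypassing the projection step.
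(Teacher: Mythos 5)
Three of the four inequalities you handle correctly, and two of them essentially as the paper does: the left-hand main inequality is indeed a direct consequence of Theorem \ref{thm:lengthd} applied to the optimal decomposition, and the $(L_1,L_2)$-interpolation bound is obtained exactly as you describe, by interpolating the contractions $[y]_k\mapsto G(y)$ from Remark \ref{rem:L1} ($L_1$) and Lemma \ref{lem:alg}\,(i) ($L_2$) and then using the triangle inequality. For the $(L_2,L_\infty)$-interpolation bound, your direct argument via the composition $S_k$ (project onto words of length $d$, then apply $G(y)\mapsto[y]_k$) is a valid and rather clean alternative to the paper's laconic ``by duality''; both arguments hinge on Remark \ref{normproj} and on the fact that $P_d$ is orthogonal on $L_2$, so you lose nothing.

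The genuine gap is in the right-hand main inequality, and you correctly diagnose it yourself: the duality route through $P_d(\xi)=G(w)$ produces a $(d+1)^3$ constant, not the claimed $(d+1)^2$, and you only gesture at the needed fix without carrying it out. The fix is precisely the ``more direct argument'' you mention in your last sentence, and it is what the paper does. One proves the pointwise comparison $\norm{[y_k]_k}_E\leq\norm{[x]_k}_E$ for each $k$, using the algebraic structure of the optimal decomposition from Proposition \ref{prop:exopt} ($[y_k]_k=[u]_k\gamma_k$ with $\gamma_j\geq 0$, $s(\gamma_j)\leq[u]_j^*[u]_j\leq 1$) and nothing else: reduce to $E=L_1+tL_\infty$ by full symmetry, take $f=e[u]_k^*$ with $e$ the norming spectral projection of $\gamma_k$ (so $\norm{f}_\infty\leq 1$ and $\norm{f}_1\leq t$), and run the same computation as in the proof of Theorem \ref{thm:lengthd} to see that $\tau_\A(e[u]_k^*[x]_k)=\sum_j\tau_\A(\cdots)\geq\tau_\A(e\gamma_k)=\norm{[y_k]_k}_{L_1+tL_\infty}$, while the left side is $\leq\norm{[x]_k}_{L_1+tL_\infty}$ by K\"othe duality. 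Plugging this into the upper bound of Theorem \ref{thm:lengthd} then gives $\norm{G(x)}_E\leq(d+1)\sum_k\norm{[y_k]_k}_E\leq(d+1)\sum_k\norm{[x]_k}_E\leq(d+1)^2\max_k\norm{[x]_k}_E$. Note that this route bypasses $P_d$ entirely, which is why it saves a factor of $(d+1)$ over your duality sketch.
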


\begin{proof}

\emph{Main inequality.} The left hand side is a direct consequence of
theorem \ref{thm:lengthd}. For the right hand side, we also use
theorem \ref{thm:lengthd} together with the fact that for any $k\in
\set{0,\dots,d}$, $\norm{[y_k]_k}_E \leq \norm{[x]_k}_E$. To prove
this latter claim, we rely once again on the same technique. It
suffices to prove it for $E = L_1 + tL_{\infty}$. By duality, there
exists an element $f \in \A$ such that $\tau(f[y_k]_k) =
\norm{[y_k]_k}_{L_1 + tL_{\infty}}$, $\norm{f}_1 \leq t$ and
$\norm{f}_{\infty} \leq 1$. Furthermore, $f$ can be chosen of the form
$e[u]_k^*$ where $e$ is a projection in $\M \tens \bM_n^{\tens
  k}$. Repeating the computation above, by lemma \ref{lem:alg}:
\begin{align*}
\norm{[x]_k}_{L_1 + tL_{\infty}} \geq \tau_{\A}(e[u]_k^*[x]_k) &=
\sum_{j\leq k} \tau_{\A}(e[u]^*_k\obox{[u]_{j}\gamma_{j}}_k) +
\sum_{j> k} \tau_{\A}(\obox{e[u]_k^*}_j [u]_{j}\gamma_{j}) \\ &\geq
\tau_{\A}(e\gamma_k) = \norm{y_k}_{L_1 + tL_{\infty}}.
\end{align*}

\emph{If $E$ is an $(L_1,L_2)$-interpolation space.} First, note that
by remark \ref{rem:L1} and interpolation, for any $k\in
\set{0,\dots,d}$, $\norm{Gx}_E \leq \norm{[x]_k}_E$. Using the
triangle inequality, we obtain the desired estimate.

\emph{If $E$ is an $(L_2,L_{\infty})$-interpolation space.} This
inequality follows from the previous one by duality. The argument
necessitates to use the boundedness of the projection on words of
length $d$ given by remark \ref{normproj}.

\end{proof}

\textbf{Acknowledgement.}  The second author is supported by
ANR-19-CE40-0002.

\bibliographystyle{plain}

\bibliography{bibli}

\end{document}